\documentclass[oneside, 16pt]{amsart}
\usepackage{mathtools}
\usepackage{soul}
\usepackage{multicol,amsmath,graphics,cancel,soul,color,bbm,amsfonts,dsfont,tikz,mathrsfs,amssymb,bm,bbm}
\usepackage{graphicx}
\usepackage[left=1in, right=1in, top=1.1in,bottom=1.1in]{geometry}
\usepackage{relsize}

\usepackage{enumitem}
\DeclareMathOperator{\EE}{E}
\usepackage{tikz}

\usetikzlibrary{positioning,calc}
\tikzset{
  hasse node/.style={circle, draw, inner sep=1.5pt, minimum size=6mm},
  hasse edge/.style={-},
}

\usetikzlibrary{matrix,patterns,positioning}
\numberwithin{equation}{section}

\definecolor{c0}{HTML}{0CB2AF}
\definecolor{c1}{HTML}{8CC22B}
\definecolor{c2}{HTML}{FAC723}
\definecolor{c3}{HTML}{F29222}
\definecolor{c4}{HTML}{E95E50}
\definecolor{c5}{HTML}{800080}

\newcommand\ringring[1]{%
  {
   \mathop{\kern0pt #1}\limits^{
     \vbox to-1.85ex{
       \kern-2ex 
       \hbox to 0pt{\hss\normalfont\kern.1em \r{}\kern-.45em \r{}\hss}%
       \vss 
     }
   }
  }
}

\newtheorem{thm}{Theorem}[section]

\newtheorem{lemma}[thm]{Lemma}

\newtheorem{remark}[thm]{Remark}
\newtheorem{cor}[thm]{Corollary}
\newtheorem{prop}[thm]{Proposition}

\newtheorem{definition}[thm]{Definition}

\usepackage[colorlinks=true, allcolors=green]{hyperref} 
\hypersetup{colorlinks=true, linkcolor=blue}

\newcounter{dummy} \numberwithin{dummy}{section}

\newtheorem{Example}[dummy]{Example}

\def\1{{\rm l}\hskip -0.21truecm 1}

\begin{document}
\title{Freeness for the $G$-circulant Decomposition of the Partial Transpose of Random matrices}

\author[Octavio Arizmendi]{Octavio Arizmendi$^{\star}$}
\address{$\star$) Centro de Investigaci\'on en Matem\'aticas A.C., Department of Probability and Statistics, M\'exico} 
\email{$\star$)octavius@cimat.mx}

\author[Juli\'an Zazueta]{Juli\'an Zazueta-Obeso$^{\dag}$}
\address{$\dag$) Centro de Investigaci\'on en Matem\'aticas A.C., Department of Probability and Statistics, M\'exico}
\email{$\dag$) julian.zazueta@cimat.mx}

\begin{abstract}
We introduce a left $G$-circulant decomposition for matrices indexed by an arbitrary finite group $G$, extending the diagonal decomposition associated with cyclic groups. We show that, when $A\in M_{|G|}(\mathcal A)$ is free from $M_{|G|}(\mathbb C)$, the components arising from the left $G$-circulant decomposition of $A^t$ form a free family, with the components associated with inverse pairs forming $R$-diagonal pairs. We also describe the distributions of these components in terms of the distribution of $A$. Our results recover the cyclic case and show that different group structures of the same order may lead to different free decompositions of the same matrix.\\

\noindent\textsc{Keywords:} Free independence, $G$-circulant decomposition, Partial transpose, $R$-diagonal elements, Regular representations.
\end{abstract}

\maketitle




\section{Introduction}

We consider a self-adjoint \(dm\times dm\) unitarily invariant random matrix \(A_d\), viewed as a $d\times d$ matrix whose entries $(A_{ij})_{1\leq i,j\leq d}$ are $m\times m$ matrices, that we call blocks. The (right) partial transpose map \(\Gamma:M_{dm}(\mathbb{C})\to M_{dm}(\mathbb{C})\), is obtained by transposing each $m\times m$ block individually. Concretely, it is defined by
\begin{equation*}
A_d^\Gamma:=(id_d\otimes t)(A_d)=\begin{bmatrix}
    A_{1,1}^t&A_{1,2}^t&\cdots &A_{1,d}^t\\
    A_{2,1}^t&A_{2,2}^t&\cdots &A_{2,d}^t\\
    \vdots &\vdots &&\vdots\\
    A_{d,1}^t&A_{2,d}^t&\cdots &A_{d,d}^t
\end{bmatrix}\in M_{d}(\mathbb{C})\otimes M_{m}(\mathbb{C}),
\end{equation*}
where \(t:M_{m}(\mathbb{C})\to M_{m}(\mathbb{C})\) denotes the matrix transposition. We are interested in the asymptotic eigenvalue distribution of \(A_d^\Gamma\) as \(d\to\infty\) and $m$ is fixed. In this regime, Arizmendi, Nechita and Vargas \cite{arizmendi2016asymptotic} proved that if $A_d$ has asymptotic eigenvalue distribution $\mu$, then \(A_d^\Gamma\) has asymptotic eigenvalue distribution given by
\begin{equation}\label{ec-6}
    \mu^\Gamma=\big(D_{1/m}\mu^{\boxplus m(m+1)/2}\big)\boxplus \big(D_{-1/m}\mu^{\boxplus m(m-1)/2}\big).
\end{equation}

We revisit the above result, trying to explain the appearance of free convolutions, by writing the partial transpose as a sum of asymptotically free operators, from the perspective of $G$-circulant decompositions. This was done in \cite{arizmendi2021cyclic} when $G$ is the cylic group of order $m$, but here we show that this is not the unique way of giving such a decomposition, by providing, for each group of order $m$, we provide a decomposition of the partial transpose into between $\lfloor \frac{m}{2}
\rfloor+1$ and $m$ asymptotically free components.

\subsection*{Historical background} 
Before describing our findings, we briefly recall some of the previous developments that led to the formulation of this problem. The study of block-modified random matrices was initiated by Aubrun \cite{aubrun2012partial}, who investigated the partial transpose of large Wishart random matrices. More precisely, let $G_1,\ldots,G_{d_1}$ be independent complex Gaussian random matrices of size $d_2\times p$, that is,
\(
G_i=(g_{j,k}^{(i)})_{j,k},
\)
where the entries $g_{j,k}^{(i)}$ are independent standard complex Gaussian random variables. Define
\begin{equation}\label{ec-1}
    W=\frac{1}{d_1d_2}
    \begin{bmatrix}
        G_1\\
        \rule{1cm}{0.5pt}\\
        \vdots\\
        \rule{1cm}{0.5pt}\\
        G_{d_1}
    \end{bmatrix}
    \begin{bmatrix}
        G_1^{*}&|&\cdots&|&G_{d_1}^{*}
    \end{bmatrix}
    =
    \frac{1}{d_1d_2}(G_iG_j^{*})_{i,j}
    \in M_{d_1d_2}(\mathbb C),
\end{equation}
and consider its partial transpose
\begin{equation*}
    W^{\Gamma}
    :=
    (id\otimes t)(W)
    =
    \frac{1}{d_1d_2}(G_jG_i^{*})_{i,j}
    \in M_{d_1d_2}(\mathbb C),
\end{equation*}
where $t$ denotes the transpose map. Aubrun proved that, as $d_1,d_2,$ and $p$ tend to infinity in such a way that
\(
\frac{p}{d_1d_2}\longrightarrow c>0,
\)
the empirical spectral distribution of $W^{\Gamma}$ converges almost surely to a shifted semicircular distribution. This matrix model is of particular interest in Quantum Information Theory because it arises as the partial transpose of a random induced quantum state. In this context, one is interested in determining whether the partial transpose remains positive, a property known as the Positive Partial Transpose (PPT) criterion, see A. Peres \cite{peres1996separability}. Aubrun answered this question by proving the existence of a strong threshold: asymptotically, the partial transpose is positive if and only if the Wishart parameter $c$ exceeds the critical value $4$.

This line of research has been further developed in several directions. In particular, different asymptotic regimes have been investigated, as well as more general linear maps acting on the blocks of Wishart matrices; see \cite{banica2013asymptotic,banica2012block,jivulescu2014reduction}. 

A particularly important paper for us, in this more general framework is the work of O. Arizmendi, et al. \cite{arizmendi2016asymptotic}, which showed that operator-valued free probability is the appropriate framework for studying block modifications of unitarily invariant large random matrices. This theory was introduced by D. Voiculescu in \cite{voiculescu2006symmetries,voiculescu1995operations}. More precisely, let $X_d$ be a self-adjoint unitarily invariant random matrix of size $dm\times dm$ and \(
\varphi:M_{m}(\mathbb{C})\to M_{n}(\mathbb{C})
\) a linear map, under analytical assumptions, the asymptotic eigenvalue distribution of the block-modified random matrix
\(
X_d^\varphi:=(id_d\otimes\varphi)(X_d)
\)
as $d\to\infty$ is given by $\mu^\varphi=
    \overset{s}{\underset{i=1}{\boxplus}}
    \bigl(D_{\rho_i/n}\mu\bigr)^{\boxplus \rho_i n}$, where $\mu$ denotes the limiting eigenvalue distribution of $X_d$, and $\rho_i$ are the eigenvalues of the Choi matrix associated with the map $\varphi$. In the particular case where $\varphi$ is the partial transpose, the measure $\mu^\varphi$ is given by \eqref{ec-6}.

   On the other hand, in a related work, J. Mingo and M. Popa \cite{mingo2013real} observed that, when $d_1=2$, the limiting operators of $W$ and $W^\Gamma$ in \eqref{ec-1} can be represented as \[
\omega=\frac{1}{d_1}
\begin{pmatrix}
\omega_{1,1} & \omega_{1,2}\\
\omega_{2,1} & \omega_{2,2}
\end{pmatrix},\qquad \omega^t=
\frac{1}{d_1}
\begin{pmatrix}
\omega_{1,1} & \omega_{2,1}\\
\omega_{1,2} & \omega_{2,2}
\end{pmatrix}.
\]
The operator $\omega$ has a Marchenko--Pastur distribution with parameter $c>0$ and is free from $M_{2}(\mathbb{C})$. From this, the authors prove that 
\(
d_1\omega^t=X_0+X_1,
\)
where \( X_0=
\begin{pmatrix}
\omega_{1,1} & \cdot\\
\cdot & \omega_{2,2}
\end{pmatrix}\) and \(X_1=
\begin{pmatrix}
\cdot & \omega_{2,1}\\
\omega_{1,2} & \cdot
\end{pmatrix},
\) are free (In order to make the matrices more visually readable, in all the article we replace all zero entries by dots). This perspective was further developed by Mingo and Arizmendi \cite{arizmendi2021cyclic} where they work with the diagonal decomposition, described in \eqref{ec-2}, and in the case where $d=2$ the authors recover the result from \cite{mingo2013real}. 

Along the same lines  Mingo, Popa, and Szpojankowski \cite{mingo2022partial} proved that, for a fixed number of blocks, the partial transpose of a Haar unitary random matrix can be decomposed along diagonals into a sum of asymptotically free and identically distributed matrices. They further analyzed the interaction between different block decompositions, obtaining asymptotic freeness under suitable assumptions.
\subsection*{Main Results}
Let us know describe our main contribution. We start with a key observation in \cite{arizmendi2016asymptotic}: the partial transpose admits a decomposition in terms of matrix units. More precisely,
\begin{equation*}
A_d^\Gamma=\sum_{i,j=1}^m(I_d\otimes E_{i,j})A_d(I_d\otimes E_{i,j}),
\end{equation*}
where \((E_{i,j})_{1\leq i,j\leq m}\in M_{m}(\mathbb C)\) are the matrix units. The joint non-commutative distribution of the deterministic family
\(
(I_d\otimes E_{i,j})_{1\le i,j\le m},
\)
with respect to
\(
\tau_d:=\frac{1}{dm}\,\mathbb E\circ\mathrm{Tr},
\)
does not depend on \(d\). Since \(A_d\) is unitarily invariant, then Voiculescu's asymptotic freeness (\cite{voiculescu1991limit}) implies that \(A_d\) is asymptotically free from the family \((I_d\otimes E_{i,j})_{1\le i,j\le m}\). 
Then we have the joint convergence 
\[
\bigl(A_d,(I_d\otimes E_{i,j})_{1\le i,j\le m}\bigr)\longrightarrow  \bigl(a,(e_{i,j})_{1\le i,j\le m}\bigr),
\]
where $a$ and $(e_{i,j})_{1\leq i,j\leq m}$ belong to the  non-commutative probability space \((\mathcal A ,\tau)\) containing a copy of \(M_{m}(\mathbb C)\) generated by abstract matrix units \((e_{i,j})_{1\le i,j\le m}\), and \(a\in\mathcal A\) a self-adjoint random variable free from the family \((e_{i,j})_{1\le i,j\le m}\), and having the limiting distribution of \(A_d\). Equivalently, \(a\) is free from the subalgebra \(M_{ m}(\mathbb C)\subseteq\mathcal A\).

Thus, the limiting distribution of \(A_d^\Gamma\) coincides with the distribution of the element
\begin{equation*}
a^t=\sum_{i,j=1}^me_{i,j}ae_{i,j},
\end{equation*}
and, consequently, the analysis of the asymptotic distribution of $A_d^\Gamma$ reduces to the study of the operator $a^t$. 

Our treatment and motivation comes from the work of J. Mingo and the fist author \cite{arizmendi2021cyclic}, where they consider the notion of diagonal decomposition for matrices. Given a matrix $A\in M_{d}(\mathcal{A})$, one defines 
\begin{equation}
    \label{ec-2}
    A=A_0+A_1+\cdots+A_{d-1},
\end{equation} where each $A_k$ consists of the entries of $A$ lying on the $k$-th cyclic diagonal. More precisely, \((A_{k})_{i,j}=A_{i,j}\) if \(j=k+i\) \(\pmod d\) and $0$ otherwise. In this setting, the authors prove that if $A\in M_{d}(\mathcal{A})$ is free from $M_{d}(\mathbb{C})$, then the diagonal decomposition of $A^t$ yields a free family. Moreover, $\{A_i,A_{d-i}\}$ is an $R$-diagonal pair, thus recovering \eqref{ec-6}.  As made explicit in the title of \cite{arizmendi2021cyclic} the cyclic group plays a fundamental role in their treatment.

In the present work, we aim to continue the study of such decompositions by replacing the cyclic group with a general finite group. This is achieved through the introduction of the left \(G\)-circulant decomposition, which provide a natural framework for extending the cyclic diagonal decomposition to arbitrary finite groups, in particular, beyond the commutative setting. This construction is motivated by the work of Bolaños et. al. \cite{bolanos2023g} on $G$-circulant quantum Markov semigroups, which relies on the theory of left $G$-circulant matrices originally introduced by Diaconis \cite{diaconis1981generating}.


The construction of the left $G$-circulant decomposition is based on endowing a matrix with the structure of a left \(G\)-circulant matrix, whose definition is recalled in Section \ref{sec-G-circulant}. To make an intuition of this construction, let us start with a small example: consider the finite group $\mathbb Z_2\times \mathbb Z_2=\{(0,0),(0,1),(1,0),(1,1)\}$ and let $Y\in M_{4}(\mathbb{C})$ be a left $\mathbb Z_2\times \mathbb Z_2$-circulant matrix given by
\begin{equation*}
    Y=aU_{(0,0)}^\ast+bU_{(0,1)}^\ast+cU_{(1,0)}^\ast+dU^\ast_{(1,1)}=\begin{bmatrix}
        a&b&c&d\\
        b&a&d&c\\
        c&d&a&b\\
        d&c&b&a
    \end{bmatrix},
\end{equation*}
where $g\mapsto U_g$ denotes the left regular representation of $\mathbb Z_2\times Z_2$, described explicitly in the Example \ref{ej-reg-rep}. Now, for any $X\in M_{4}(\mathcal{A})$, we can endow $X$ with the same pattern as $Y$, obtaining:
\[
\scalebox{0.88}{$
\begin{aligned}
X=&\begin{bmatrix}
        X_{1,1}&\cdot&\cdot&\cdot\\ 
        \cdot&X_{2,2} &\cdot&\cdot\\ 
        \cdot&\cdot&X_{3,3}&\cdot\\ 
        \cdot&\cdot&\cdot&X_{4,4}
    \end{bmatrix}+\begin{bmatrix}
        \cdot&X_{1,2}&\cdot&\cdot\\ 
        X_{2,1}&\cdot &\cdot&\cdot\\ 
        \cdot&\cdot&\cdot&X_{3,4}\\ 
        \cdot&\cdot&X_{4,3}&\cdot
    \end{bmatrix} +\begin{bmatrix}
        \cdot&\cdot&X_{1,3}&\cdot\\ 
        \cdot&\cdot &\cdot&X_{2,4}\\ 
        X_{3,1}&\cdot&\cdot&\cdot\\ 
        \cdot&X_{4,2}&\cdot&\cdot
    \end{bmatrix}+\begin{bmatrix}
        \cdot&\cdot&\cdot&X_{1,4}\\ 
        \cdot&\cdot &X_{2,3}&\cdot\\ 
        \cdot&X_{3,2}&\cdot&\cdot\\ 
        X_{4,1}&\cdot&\cdot&\cdot
    \end{bmatrix}.\\[0.25cm]
\end{aligned}
$}
\]
The last splitting of $X$ is called the left $\mathbb Z_2\times\mathbb Z_2$-circulant decomposition of $X$. 
Note that the above decomposition may be rewritten as 
\[X=X\odot U_{(0,0)}^\ast+X\odot U_{(0,1)}^\ast+X\odot U_{(1,0)}^\ast+X\odot U_{(1,1)}^\ast.\]
where $\odot$ denotes the Hadamard product.

The preceding example naturally extends to an arbitrary finite group. Let \(G=\{e,g_1,g_2,\ldots,g_{|G|-1}\}\) be a finite group and let \(A=(A_{i,j})_{1\le i,j\le |G|}\in M_{|G|}(\mathcal{A})\). For each \(g\in G\), define the map \(\varphi^{(g)}:  M_{|G|}(\mathcal{A})\to M_{|G|}(\mathcal A)\) by 
\begin{equation}\label{ec-4}
\big(\varphi^{(g)}(A)\big)_{i,j}=
\begin{cases}
A_{i,j}, & \text{if } g_j=gg_i,\\
0, & \text{otherwise}.
\end{cases}
\end{equation}
The map \(\varphi^{(g)}\) selects precisely the entries corresponding to the nonzero positions of the matrix $U_g^\ast$. The left \(G\)-circulant decomposition of \(A\) is then defined by 
\[
A=\sum_{g\in G}\varphi^{(g)}(A)
=:A_e+A_{g_1}+A_{g_2}+\cdots+A_{g_{|G|-1}},
\]
where
\(A_g:=\varphi^{(g)}(A)
\), for all $g\in G$. Hence, the matrices $\{A_g\}_{g\in G}$ partition the entries of $A$ according to the pattern determined by the left regular representation of $G$.

Our main result, Theorem \ref{main-thm}, shows that if \(A\in M_{|G|}(\mathcal{A})\) is free from \(M_{|G|}(\mathbb C)\), and the left \(G\)-circulant decomposition of \(A^t\) is
\[
A^t=Y_e+Y_{g_1}+\cdots+Y_{g_{|G|-1}},
\]
then $Y_{e},\,\{Y_{g},Y_{g^{-1}}\}$ for all $g\in G\backslash \{e\}$, is a free family. Moreover, $\{Y_g,Y_{g^{-1}}\}$ is $R$-diagonal pair for all $g\in G\backslash\{e\}$. In the particular case where we take $G=\mathbb{Z}_d$, we are in the framework of the diagonal decomposition \eqref{ec-2} and reproduce the results from \cite{arizmendi2021cyclic}.

Another relevant result shows that the distributions of the components arising in the left $G$-circulant decomposition of $A^t$ depend on only three distributions. More precisely, if the empirical spectral distribution of $A$ is $\mu$, then the component $Y_e$ has distribution $$\mu_{Y_e}=\big(D_{\frac{1}{|G|}}\mu\big)^{\boxplus |G|}.$$ On the other hand, all components associated with nontrivial involutions share the common distribution $$\mu_{Y_g}=\big(D_{\frac{1}{|G|}}\mu\big)^{\boxplus\frac{|G|}{2}}\boxplus\big(D_{-\frac{1}{|G|}}\mu\big)^{\boxplus\frac{|G|}{2}},$$ while, for each inverse pair $(g,g^{-1})$, the sum $Y_{g}+Y_{g^{-1}}$ has the common distribution $$ \mu_{Y_g+Y_{g^{-1}}}=\big(D_{\frac{1}{|G|}}\mu\big)^{\boxplus|G|}\boxplus\big(D_{-\frac{1}{|G|}}\mu\big)^{\boxplus|G|}.$$

An important feature of our construction is that different groups of the same order may yield different left \(G\)-circulant decompositions of the same matrix. Consequently, one obtains different families of freely independent components depending on the chosen group structure, see Remark \ref{main-remark}.

Apart from the Introduction, the paper is organized into three sections. Section \ref{preliminaries}, in which we present some preliminaries of free probability, regular representations and $G$-circulant matrices. Section \ref{left-G-circ-decomposition}, where we introduce the left $G$-circulant decompositions and present several technical results. Section \ref{section-freeness} contains the main results of the paper. Finally, Section \ref{section-applicacions} we present applications to the Wishart case.

\section{Preliminaries}\label{preliminaries}
\subsection{Free independence} A \textbf{non-commutative probability space} is a pair $(\mathcal{A},\varphi)$ where $\mathcal{A}$ is a unital algebra and $\varphi:\mathcal{A}\to\mathbb{C}$ is a linear functional such that $\varphi(1_\mathcal{A})=1$. If, furthermore,  $\mathcal{A}$ is a $\ast$-algebra, then we call it a  \textbf{$\ast$-probability space}.

For a given non-commutative probability space $(\mathcal{A},\varphi)$, unital subalgebras $(\mathcal{A}_i)_{i\in I}$ of $\mathcal{A}$ are \textbf{free independent} if for all $n\in \mathbb{N}$, $a_1,\dots,a_n\in\mathcal{A}$ such that $a_j\in\mathcal{A}_{i(j)}$ where $i(1),\dots,i(n)\in I$, $i(1)\neq i(2)\neq \cdots \neq i(n)$, then 
\begin{equation*}
    \varphi\big((a_1-\varphi(a_1)1_\mathcal{A})\cdots(a_n-\varphi(a_n)1_\mathcal{A}) \big)=0.
\end{equation*}
A set $\{a_i\in\mathcal{A}\}$ is free if the unital subalgebras $\text{alg}\langle1_\mathcal{A},a_i\rangle$ for all $i\in I$ are a free family.

For $n\geq 1$, a \textbf{partition} of $[n]:=\{1,\dots,n\}$ is a set $\pi=\{V_1,\dots,V_r\}$ of pairwise disjoint non-empty subsets of $[n]$ such that $V_i\cup\cdots \cup V_r=[n]$. The set of all partitions of $[n]$ is denoted by $\mathcal{P}(n)$. A partition $\pi=\{V_1,\dots,V_r\}$ is called \textbf{non-crossing partition} if there are no distinct blocks $V_s$ and $V_t$ with $a,c\in V_s$  and $b,d\in V_t$ such that $a<b<c<d$.

    For a non-commutative probability space $(\mathcal{A},\varphi)$ and $a\in\mathcal{A}$ the $n$-th \textbf{moment} of $a$ is $\varphi(a^n)$. Suppose $a_1,\dots,a_n\in \mathcal{A}$ and $V=\{i_1<\cdots <i_s\}$ is a block of some partition $\pi\in\mathcal{P}(n)$, we denote $(a_1,\dots,a_n)|_V=(a_{i_1},\dots,a_{i_s})$. Let $(f_n)_{n\geq 1}$ be a sequence of multilinear functionals $f_n:\mathcal{A}^n\to\mathbb{C}$. Then for each $\pi\in\mathcal{P}(n)$ 
    \begin{equation*}
        f_\pi[a_1,\dots,a_n]=\prod_{V\in \pi}f_{|V|}\big((a_1,\dots,a_n)|_V\big).
    \end{equation*}
    The \textbf{free cumulants} \cite{speicher1994multiplicative} are the multilinear functionals $\kappa_n : \mathcal{A}^n \to \mathbb{C}$ defined recursively by the moment--cumulant formula
\begin{equation*}
\varphi(a_1 \cdots a_n)
=
\sum_{\pi \in \mathcal{NC}(n)}
\kappa_\pi[a_1, \ldots, a_n],
\end{equation*}
where $\mathcal{NC}(n)$ is the set of non-crossing partitions of $[n]$.
\begin{thm}[\cite{nica2006lectures}, Thm. 11.16]\label{thm-freenes}
     Consider a non-commutative probability space $(\mathcal{A},\varphi)$ and let $(\kappa_n)_{n\in\mathbb{N}}$ be the corresponding free cumulant functionals. Let $(\mathcal{A}_{i})_{i\in I}$ be unital subalgebras of $\mathcal{A}$. Then the following are equivalent:
     \begin{enumerate}
         \item $(\mathcal{A}_i)_{i\in I}$ are free independent.
         \item For all $n\geq 2$ and for all $a_j\in\mathcal{A}_{i(j)}$ ($j=1,\dots,n$) with $i(1),\dots,i(n)\in I$, we have
         \begin{equation*}
             \kappa_n(a_1,\dots,a_n)=0,
         \end{equation*}
         whenever there exists $1\leq l,k\leq n$ with $i(l)\neq i(k)$.
     \end{enumerate}
\end{thm}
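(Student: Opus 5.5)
The plan is to prove the two implications separately, working with the moment--cumulant formula and one combinatorial fact about non-crossing partitions: \emph{every $\pi\in\mathcal{NC}(n)$ has at least one block which is an interval} $\{k,k+1,\dots,k+s\}$. I will take from the general theory of free cumulants (Nica--Speicher, Lectures 11) two standard facts.
\begin{enumerate}
\item The formula for cumulants with products as arguments: for $1\le i_1<\dots<i_m=n$,
\[
\kappa_m(a_1\cdots a_{i_1},\dots,a_{i_{m-1}+1}\cdots a_n)=\sum_{\pi\vee\sigma=1_n}\kappa_\pi[a_1,\dots,a_n],
\]
where $\sigma$ is the interval partition determined by the $i_j$.
\item $\kappa_n(a_1,\dots,a_n)=0$ whenever $n\ge2$ and some $a_j=1_{\mathcal A}$.
\end{enumerate}
Multilinearity of the $\kappa_n$ is used throughout.

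\textbf{(2)$\Rightarrow$(1).} Take $a_j\in\mathcal A_{i(j)}$ with $i(1)\neq i(2)\neq\cdots\neq i(n)$ and $\varphi(a_j)=0$. Expand
\[
\varphi(a_1\cdots a_n)=\sum_{\pi\in\mathcal{NC}(n)}\kappa_\pi[a_1,\dots,a_n].
\]
By (2), every $\pi$ with a block containing indices from two different algebras contributes zero. It therefore suffices to look at $\pi$ all of whose blocks are \emph{homogeneous}, i.e.\ each block uses a single algebra. Such a $\pi$ has an interval block. Since neighbouring indices come from different algebras, a homogeneous interval block must be a singleton $\{k\}$, and then $\kappa_\pi$ contains the factor $\kappa_1(a_k)=\varphi(a_k)=0$. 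Hence $\varphi(a_1\cdots a_n)=0$, which is freeness.

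\textbf{(1)$\Rightarrow$(2).} I argue by induction on $n$, and do it in two steps.

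First, I treat the case where the word is \emph{alternating}, $i(1)\neq\cdots\neq i(n)$. By multilinearity and fact (2) above, I may replace each $a_j$ by $a_j-\varphi(a_j)1$, so the $a_j$ are centered. Then
\[
\kappa_n(a_1,\dots,a_n)=\varphi(a_1\cdots a_n)-\sum_{\pi\neq1_n}\kappa_\pi[a_1,\dots,a_n].
\]
The first term vanishes by freeness. In the sum, every block of $\pi\neq1_n$ has fewer than $n$ elements. So by the induction hypothesis, any $\pi$ with a mixed block contributes zero. The remaining $\pi$ have only homogeneous blocks, and these die by exactly the singleton argument used in the first implication. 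Hence $\kappa_n(a_1,\dots,a_n)=0$.

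Second, I treat a general mixed word, where some neighbours may come from the same algebra; this is the step I expect to need care. Group maximal runs of consecutive same-algebra arguments into products, and call the resulting alternating word $b_1,\dots,b_m$. If $m\ge2$, apply fact (1) to $\kappa_m(b_1,\dots,b_m)$. The left side vanishes by the alternating case. On the right, $\kappa_n(a_1,\dots,a_n)$ appears (from $\pi=1_n$) together with terms $\kappa_\pi$ for $\pi\neq1_n$, $\pi\vee\sigma=1_n$. I will show each of those other terms contains a mixed block of size $<n$, hence vanishes by induction, and then solve for $\kappa_n(a_1,\dots,a_n)=0$.

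The delicate point is checking that every such $\pi$ really has a mixed block, i.e.\ ruling out all-homogeneous $\pi$ with $\pi\vee\sigma=1_n$. My argument is as follows. Suppose $\pi$ is homogeneous. Then every block of $\pi$, and every block of $\sigma$, lies inside one algebra-class. The algebra-class partition is therefore an upper bound for both, so $\pi\vee\sigma$ is at most that partition. That partition is not $1_n$, since the word is mixed. This contradicts $\pi\vee\sigma=1_n$.

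If instead $m=1$ after grouping, the word is not mixed, so there is nothing to prove. The cyclic wrap-around, where the first and last runs share an algebra, causes no issue, because the grouping is taken linearly and the argument above never used cyclicity.
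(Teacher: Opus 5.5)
Your argument is correct and is essentially the standard Nica--Speicher proof of Theorem 11.16, which the paper cites without reproducing. It has the same three ingredients: the interval-block/singleton argument for (2)$\Rightarrow$(1), induction with centering for alternating words, and grouping of maximal runs plus the products-as-arguments formula for general mixed words. One point deserves to be made explicit: the join in the products-as-arguments formula is taken in $\mathcal{NC}(n)$, while the algebra-class partition may be crossing, so your upper-bound step tacitly uses the easy fact that for an interval partition $\sigma$ the join $\pi\vee\sigma$ in $\mathcal{NC}(n)$ coincides with the join among all partitions (equivalently, $\pi\vee\sigma=1_n$ just means that $\pi$ connects all blocks of $\sigma$).
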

\begin{thm}[\cite{nica2006lectures}, Thm. 11.12] Consider a non-commutative probability space $(\mathcal{A},\varphi)$ and let $(\kappa_\pi)_{\pi\in \mathcal{NC}}$ be the corresponding free cumulants. Suppose $n_1,\dots,n_r$ are positive integers, $n=n_1+\cdots+n_r$ and $a_1,\dots,a_{n_1},a_{n_1+1},\dots,a_{n_1+n_2},\dots,a_{n_1+\cdots +n_r}\in\mathcal{A}$. Let $A_1=a_1\cdots a_{n_1}$, $A_2=a_{n_1+1}\cdots a_{n_1+n_2},\ \dots,\  A_r=a_{n_1+\cdots+n_{r-1}+1}\cdots a_{n_1+\cdots +n_r}$. Then
\begin{equation}\label{products-as-arguments}
    \kappa_r(A_1,A_2,\dots,A_r)=\sum_{\substack{\pi\in \mathcal{NC}(n)\\\pi\vee\sigma=1_{n}}}\kappa_\pi[a_1,\dots,a_n],
\end{equation}
    where $\sigma=\big\{\{1,2,\dots,n_1\},\dots,\{n_1+n_1+\cdots+n_{r-1}+1,\dots,n_1+n_1+\cdots+n_r\}\big\}$.
\end{thm}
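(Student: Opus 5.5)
The plan is to prove the formula by Möbius inversion on the lattice $\mathcal{NC}(n)$. We pass from the cumulant $\kappa_r$ of the products to moments of the products, identify these with moments of the individual letters over partitions lying above $\sigma$, and then re-expand in cumulants of the letters. The main tool is the canonical embedding of $\mathcal{NC}(r)$ into $\mathcal{NC}(n)$ as the interval $[\sigma,1_n]$.

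\emph{Step 1 (the embedding).} For $\tau\in\mathcal{NC}(r)$, define $\hat\tau\in\mathcal{P}(n)$ by replacing each index $k\in[r]$ with the $k$-th block of $\sigma$, the segment $\{n_1+\cdots+n_{k-1}+1,\dots,n_1+\cdots+n_k\}$.
\begin{itemize}
\item Since the blocks of $\sigma$ are consecutive intervals, $\hat\tau$ is non-crossing and satisfies $\hat\tau\ge\sigma$.
\item The map $\tau\mapsto\hat\tau$ is an order-preserving bijection from $\mathcal{NC}(r)$ onto $[\sigma,1_n]\subseteq\mathcal{NC}(n)$. Its inverse sends $\rho\ge\sigma$ to the induced partition on the blocks of $\sigma$.
\item Consequently the Möbius functions agree: $\mu_{\mathcal{NC}(r)}(\tau,1_r)=\mu_{\mathcal{NC}(n)}(\hat\tau,1_n)$.
\item The moment functionals agree: $\varphi_\tau[A_1,\dots,A_r]=\varphi_{\hat\tau}[a_1,\dots,a_n]$. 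Each block of $\hat\tau$ is a union of consecutive segments taken in increasing order, so the letters inside a block multiply out to exactly the product of the corresponding $A_k$'s in order.
\end{itemize}

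\emph{Step 2 (inversion and resummation).} Möbius inversion of the moment–cumulant formula on $\mathcal{NC}(r)$ gives
\[
\kappa_r(A_1,\dots,A_r)=\sum_{\tau\in\mathcal{NC}(r)}\varphi_\tau[A_1,\dots,A_r]\,\mu(\tau,1_r).
\]
By Step 1 this equals $\sum_{\rho\in[\sigma,1_n]}\varphi_\rho[a_1,\dots,a_n]\,\mu(\rho,1_n)$.

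Next, expand each $\varphi_\rho=\sum_{\pi\le\rho}\kappa_\pi$ using the multiplicative moment–cumulant formula in $\mathcal{NC}(n)$, and interchange the order of summation. The condition $\pi\le\rho$ and $\sigma\le\rho$ is equivalent to $\pi\vee\sigma\le\rho$, where the join is taken in $\mathcal{NC}(n)$. This yields
\[
\kappa_r(A_1,\dots,A_r)=\sum_{\pi\in\mathcal{NC}(n)}\kappa_\pi[a_1,\dots,a_n]\sum_{\rho\in[\pi\vee\sigma,1_n]}\mu(\rho,1_n).
\]
By the defining recursion of the Möbius function, the inner sum equals $1$ if $\pi\vee\sigma=1_n$ and $0$ otherwise. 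This gives \eqref{products-as-arguments}.

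The main point requiring care is Step 1. One must check that $\tau\mapsto\hat\tau$ really lands in, and exhausts, the interval $[\sigma,1_n]$ of non-crossing partitions, so that the Möbius functions coincide. One must also note that the join $\pi\vee\sigma$ is the one in $\mathcal{NC}(n)$, which in general differs from the join in $\mathcal{P}(n)$. The remaining steps are formal lattice manipulations.
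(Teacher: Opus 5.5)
Your proposal is correct and is essentially the standard proof of Nica--Speicher's Theorem 11.12, which the paper cites rather than reproves: the lattice isomorphism $\mathcal{NC}(r)\cong[\sigma,1_n]$ matching both M\"obius functions and moment functionals, then M\"obius inversion, re-expansion in cumulants, and $\sum_{\rho\in[\pi\vee\sigma,1_n]}\mu(\rho,1_n)=\delta_{\pi\vee\sigma,1_n}$. A minor point on your caveat: because $\sigma$ is an interval partition, the join in $\mathcal{NC}(n)$ coincides with the join in $\mathcal{P}(n)$, so your formula agrees with the ``blocks of $\sigma$ connected by $\pi$'' reading the paper uses later.
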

The following lemma slightly generalizes Theorem \ref{thm-freenes}. Its importance lies in the fact that it allows one to verify freeness between two finite families without passing to the unital algebras they generate. In fact, it suffices to check that all mixed free cumulants vanish when evaluated directly on the variables themselves, yielding a more concrete and tractable criterion in practice.
\begin{lemma}\label{lemma-ind-libre-conjuntos}
    Consider a non-commutative probability space $(\mathcal{A},\varphi)$ and let $(\kappa_n)_{n\in\mathbb{N}}$ be the corresponding free cumulant functionals. Let $(a_i)_{i\in I}\subset \mathcal{A}$ be a family of random variables. Then the following two statements are equivalent:
    \begin{enumerate}
    \item The family $\left\{a_1,\dots,a_m\right\}$ is free from $\left\{a_{m+1}, a_{m+2}, \dots, a_{p}\right\}$.

    \item We have that for all $n\geq2$ and for all $i(1),\cdots,i(n)\in [p]$, we have $$\kappa_n(a_{i(1)},\cdots,a_{i(n)})=0,$$ whenever there exist $1\leq l,k\leq p$ such that $1\leq i(l)\leq m$ and $m+1\leq i(k)\leq p.$
        \end{enumerate}
\end{lemma}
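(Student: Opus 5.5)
The plan is to reduce the lemma to Theorem \ref{thm-freenes} applied to the two unital subalgebras $\mathcal{B}_1=\text{alg}\langle 1_\mathcal{A},a_1,\dots,a_m\rangle$ and $\mathcal{B}_2=\text{alg}\langle 1_\mathcal{A},a_{m+1},\dots,a_p\rangle$. Statement (1) means by definition that $\mathcal{B}_1$ and $\mathcal{B}_2$ are free. The direction (1)$\Rightarrow$(2) is then immediate. Each $a_{i(j)}$ lies in $\mathcal{B}_1$ or in $\mathcal{B}_2$. If the tuple $i(1),\dots,i(n)$ contains indices from both ranges, then the corresponding variables do not all come from the same subalgebra. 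Theorem \ref{thm-freenes} then gives $\kappa_n(a_{i(1)},\dots,a_{i(n)})=0$.

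For (2)$\Rightarrow$(1), I must show that every mixed cumulant $\kappa_r(A_1,\dots,A_r)$ vanishes, where each $A_k$ lies in $\mathcal{B}_1$ or $\mathcal{B}_2$ and the $A_k$ do not all lie in the same one. By multilinearity it suffices to take each $A_k$ to be a monomial in the generators, either $1_\mathcal{A}$ or a word $a_{j_1}\cdots a_{j_s}$ whose letters all come from one family.

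I first dispose of the unit. For $r\ge 2$, any cumulant with an argument equal to $1_\mathcal{A}$ vanishes. This is a standard fact, proved for instance via formula \eqref{products-as-arguments} or by induction from the moment--cumulant formula. So I may assume every $A_k$ is a nonempty word.

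Now I expand the $A_k$ into their letters $a_1',\dots,a_n'$ and apply \eqref{products-as-arguments}: $\kappa_r(A_1,\dots,A_r)=\sum_{\pi\vee\sigma=1_n}\kappa_\pi[a_1',\dots,a_n']$. By hypothesis (2), the only surviving $\pi$ are those in which every block contains letters from one family only. Colour each position by its family. Every block of $\sigma$ is monochromatic, because each $A_k$ is a word in a single family, and every surviving block of $\pi$ is monochromatic. The join $\pi\vee\sigma$ is obtained by chaining overlapping blocks of $\pi$ and $\sigma$, so it can only connect positions of the same colour. Since both colours occur, $\pi\vee\sigma\neq 1_n$. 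Hence every term vanishes, and Theorem \ref{thm-freenes} yields the freeness of $\mathcal{B}_1$ and $\mathcal{B}_2$.

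The only delicate step is the colouring argument showing that $\pi\vee\sigma$ respects colours. It holds because the equivalence relation generated by two colour-preserving partitions is itself colour-preserving. The handling of unit arguments is routine, and the rest is bookkeeping with multilinearity.
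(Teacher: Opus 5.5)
Your proof is correct and follows the same route as the paper. You get (1)$\Rightarrow$(2) directly from Theorem~\ref{thm-freenes}, and (2)$\Rightarrow$(1) by reducing to nonempty monomials and applying the products-as-arguments formula; your colouring argument is a cleaner justification of the paper's claim that any $\pi$ with $\pi\vee\sigma=1_n$ must contain a block mixing the two families. One remark: the join in that formula is taken in $\mathcal{NC}(n)$, but because $\sigma$ is an interval partition it coincides with the join in $\mathcal{P}(n)$, so describing $\pi\vee\sigma$ by chaining overlapping blocks is legitimate.
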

\begin{proof}
Assume that the family $\left\{a_1,\dots,a_m\right\}$ is free from the family 
    $\left\{a_{m+1}, a_{m+2}, \dots, a_{p}\right\}$. This means the unital algebras $\mathcal{A}_1=alg \left<1,a_1,\dots,a_m\right>$ and $\mathcal{A}_2=alg \left<1,a_{m+1},\dots,a_p\right>$ are free. By Theorem \ref{thm-freenes}, all mixed free cumulants vanish.  
Hence, for any $n\geq 2$ and any indices $i(1),\dots,i(n)$ that are not all in the same family, we have
\[
\kappa_n(a_{i(1)},\dots,a_{i(n)})=0,
\]
which proves (2).

Assume that the condition $(2)$ holds. Let $b_j\in\mathcal{A}_{r(j)}$ for $j=1,\dots,n$ with $r(j)\in \{1,2\}$, and suppose that there exist indices
$1\leq l,k\leq n$ such that $r(l)=1$ and $r(k)=2$. Then we have to show that $\kappa_n(b_1,\dots,b_n)$ vanishes. 
Each $b_j$ is a noncommutative polynomial in the generators of its algebra.  
Since free cumulants are multilinear and vanish whenever one of the arguments is $1$ (for $n\geq 2$), it suffices to consider the case where each $b_j$ is a monomial of the form
\begin{equation*}
    b_j=\prod_{i=1}^pa_i^{\beta_i(j)},\qquad\text{for some }\beta_i^{(j)}\in \mathbb{N},
\end{equation*}
where $\beta_i(j)=0$ if $r(j)=2$ for all $i=m+1,\dots,p$, and $\beta_i(j)=0$ if $r(j)=1$ for all $i=1,\dots,m$. Then
\begin{equation*}
    \kappa_n(b_1,\dots,b_l,\dots,b_k,\dots,b_n)=\kappa_n\left(\prod_{i=1}^pa_i^{\beta_i(1)},\dots,\prod_{i=1}^ma_i^{\beta_i(l)},\dots,\prod_{i=m+1}^pa_i^{\beta_i(k)},\dots,\prod_{i=1}^pa_i^{\beta_i(n)}\right).
\end{equation*}
Now by the product as arguments formula \ref{products-as-arguments} we get
\begin{equation*}
    \kappa_n(b_1,\dots,b_l,\dots,b_k,\dots,b_n)=\sum_{\overset{\pi\in \mathcal{NC}(N)}{\pi\vee\sigma=1_{N}}}\kappa_\pi\Big[a_1^{\beta_1(1)},\dots,a_p^{\beta_p(n)}\Big],
\end{equation*}
where $N = \sum_{j=1}^n \sum_{i=1}^p \beta_i(j)$ is the total number of factors, and  $\sigma$ is a interval partition whose blocks correspond to the consecutive factors in each product defining $b_j$. The condition $\pi \vee \sigma = 1_N$ means that all blocks of $\sigma$ are connected by $\pi$.  
Therefore, for a term in the sum to be nonzero, the cumulant $\kappa_\pi\Big[a_1^{\beta_1(1)},\dots,a_p^{\beta_p(n)}\Big]$ must connect variables coming from different $b_j$’s.
But since at least one $b_l$ belongs to $\mathcal{A}_1$ and one $b_k$ belongs to $\mathcal{A}_2$, any such partition $\pi$ necessarily produces at least one block containing both:
\[
a_i \quad (1\leq i\leq m)
\qquad \text{and} \qquad
a_j \quad (m+1\leq j\leq p).
\]
By assumption (2), all such mixed cumulants vanish.  
Hence every term in the above sum is zero, and therefore
\(
\kappa_n(b_1,\dots,b_n)=0.
\)
This shows that all mixed cumulants between $\mathcal{A}_1$ and $\mathcal{A}_2$ vanish, which is equivalent to freeness by \ref{thm-freenes}. Thus (1) holds.
\end{proof}
Other important result in this direction that will be important in the main results is the following: 
\begin{thm}[\cite{nica2006lectures}, Thm. 14.3]\label{thm: as ind bs}
    Let $(\mathcal{A},\varphi)$ be a non-commutative probability space and consider random variables $a_1,\ldots,a_n,b_1,\ldots,b_n\in\mathcal{A}$ such that $\{a_1,\ldots,a_n\}$, $\{b_1,\ldots,b_n\}$ are freely independent. Then we have
    \begin{equation*}
        \varphi(a_1b_1a_2b_2\cdots a_nb_n)=\sum_{\pi\in\mathcal{NC}(n)}\kappa_\pi[a_1,\ldots,a_n]\varphi_{Kr(\pi)}[b_1,\ldots,b_n].
    \end{equation*}
\end{thm}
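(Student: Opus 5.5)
The statement is Theorem \ref{thm: as ind bs}: if $\{a_1,\ldots,a_n\}$ is free from $\{b_1,\ldots,b_n\}$, then $\varphi(a_1b_1\cdots a_nb_n)=\sum_{\pi\in\mathcal{NC}(n)}\kappa_\pi[a_1,\ldots,a_n]\,\varphi_{Kr(\pi)}[b_1,\ldots,b_n]$. The plan is to expand the left-hand side with the moment--cumulant formula in $2n$ variables and then use vanishing of mixed cumulants.

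\textbf{Step 1 (expansion).} Write
\[
\varphi(a_1b_1\cdots a_nb_n)=\sum_{\rho\in\mathcal{NC}(2n)}\kappa_\rho[a_1,b_1,\ldots,a_n,b_n].
\]
The $a$'s sit in the odd positions and the $b$'s in the even positions.

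\textbf{Step 2 (vanishing of mixed blocks).} By Theorem \ref{thm-freenes}, or more concretely Lemma \ref{lemma-ind-libre-conjuntos}, any block of $\rho$ that contains both an odd and an even position gives a vanishing cumulant. So only $\rho=\pi\cup\sigma$ survive, where $\pi$ is a non-crossing partition of the odd positions and $\sigma$ one of the even positions, with $\pi\cup\sigma$ non-crossing. Identify the odd and even positions with $[n]$. By definition of the Kreweras complement, for fixed $\pi\in\mathcal{NC}(n)$ the admissible $\sigma$ are exactly those with $\sigma\le Kr(\pi)$. This gives
\[
\varphi(a_1b_1\cdots a_nb_n)=\sum_{\pi\in\mathcal{NC}(n)}\kappa_\pi[a_1,\ldots,a_n]\sum_{\substack{\sigma\in\mathcal{NC}(n)\\ \sigma\le Kr(\pi)}}\kappa_\sigma[b_1,\ldots,b_n].
\]

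\textbf{Step 3 (resumming the inner sum).} Apply the moment--cumulant formula block by block: for each block $W$ of $Kr(\pi)$, sum over non-crossing partitions of $W$. Since $\sigma\le Kr(\pi)$ is non-crossing, its restriction to each block is non-crossing in the induced order. Conversely, any choice of non-crossing partitions of the individual blocks of the non-crossing partition $Kr(\pi)$ assembles into a non-crossing $\sigma$. The inner sum therefore factorizes as $\prod_{W\in Kr(\pi)}\varphi\big((b_1,\ldots,b_n)|_W\big)=\varphi_{Kr(\pi)}[b_1,\ldots,b_n]$, which is the claim.

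\textbf{Main obstacle.} The delicate step is the combinatorial bijection in Step 2: the map $(\pi,\sigma)\mapsto\pi\cup\sigma$ must be a bijection onto the non-crossing partitions of $[2n]$ with no mixed blocks, and the maximal admissible $\sigma$ must be exactly $Kr(\pi)$. This holds because $Kr(\pi)$ is by definition the largest $\sigma$ with $\pi\cup\sigma$ non-crossing on the interleaved points $1<\bar1<2<\bar2<\cdots$. One still has to check that every $\sigma\le Kr(\pi)$ keeps $\pi\cup\sigma$ non-crossing (refinement cannot create crossings) and that every admissible $\sigma$ lies below $Kr(\pi)$ (maximality together with the lattice structure).
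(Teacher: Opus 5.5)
Your argument is correct, and it is essentially the standard proof in Nica--Speicher, which the paper quotes without proof: expand over $\mathcal{NC}(2n)$, discard mixed blocks by freeness, use $\pi\cup\sigma\in\mathcal{NC}(2n)\iff\sigma\le Kr(\pi)$, and resum $\sum_{\sigma\le Kr(\pi)}\kappa_\sigma[b_1,\ldots,b_n]=\varphi_{Kr(\pi)}[b_1,\ldots,b_n]$ block by block. The one step to tighten is the implication ``admissible $\Rightarrow \sigma\le Kr(\pi)$'', which does not follow from maximality alone; it follows from two facts: some block of $\pi$ separates a pair $\bar i<\bar j$ exactly when $\{i+1,\ldots,j\}$ is not a union of blocks of $\pi$, and the relation ``$\{i+1,\ldots,j\}$ is a union of blocks of $\pi$'' is an equivalence relation on $\{\bar 1,\ldots,\bar n\}$ whose classes form a non-crossing partition, namely $Kr(\pi)$.
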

\subsection{$R$-diagonal operators} Let $(\mathcal{A},\varphi)$ be a $\ast$-probability space. For $a,a^\ast\in\mathcal{A}$ and $\varepsilon\in\{1,-1\}$, we let $a^{(\varepsilon)}=a$ if $\varepsilon=1$ and $a^{(\varepsilon)}=a^\ast$ if $\varepsilon=-1$. We say that $\{a,a^\ast\}$ is an \textbf{$R$-diagonal pair} (see \cite{nica2006lectures}) if for all $\varepsilon_1,\dots,\varepsilon_n\in\{1,-1\}$ we have
\begin{equation*}
    \kappa_n\big(a^{(\varepsilon_1)},\dots,a^{(\varepsilon_n)}\big)=0,
\end{equation*}
unless $n$ is even and $\varepsilon_i+\varepsilon_{i+1}=0$ for all $1\leq i<n$. We denote the non-vanishing cumulants by 
\begin{equation*}
    \alpha_n:=\kappa_{2n}(a,a^\ast,a,a^\ast,\dots,a,a^\ast)\quad \text{and}\quad \beta_n:=\kappa_{2n}(a^\ast,a,a^\ast,a,\dots,a^\ast,a),\qquad \forall \ n\geq 1.
\end{equation*}
The sequences $(\alpha_n)_{n\geq 1}$ and $(\beta_n)_{n\geq 1}$ are called the \textbf{determining sequences} of $a$. An $R$-diagonal pairs is uniquely determined by its determining sequences (see \cite{nica2006lectures}, Cor. 15.7).

In case where the state $\varphi$ is \textbf{tracial}, which will be relevant for us $\alpha_n = \beta_n$, for all $\ n \geq 1$. 
In this setting, $aa^\ast$ and $a^\ast a$ are identically distributed, and their free cumulants can be calculated as follows,
\begin{equation*}
    \kappa_{n}(aa^\ast,\dots,aa^\ast) = \kappa_{n}(a^\ast a,\dots,a^\ast a) = \sum_{\pi\in\mathcal{NC}(n)} \prod_{V \in \pi} \alpha_{|V|}.
\end{equation*}
\subsection{Regular representations}\label{sec-left-reg-rep}
Let $G=\{e,g_1,\dots,g_{|G|-1}\}$ be a finite group and the Hilbert space
\begin{equation*}
    \ell_2(G)=\bigg\{\alpha:G\to\mathbb{C}\quad \bigg|\quad \sum_{g\in G}|\alpha(g)|^2<\infty\bigg\}.
\end{equation*}
equipped with the inner product $\displaystyle \langle \alpha,\beta\rangle=\sum_{g\in G}\overline{\alpha(g)}\beta(g)$.  
The bounded operators on $\mathcal{B}(\ell_2(G))$ admit the basis given by $\left\{\left|e_g\right\rangle\left\langle e_h\right|:g,h\in G\right\}$. The \textbf{left regular representation} maps each $g\in G$ to a unitary operator 
$U_g: \ell_2(G)\to\ell_2(G)$ defined by
\begin{equation*}
    U_g|e_h\rangle=|e_{gh}\rangle,\qquad 
    U_g^\ast|e_h\rangle=|e_{g^{-1}h}\rangle,
\end{equation*}
which can be written as
\begin{equation}\label{ec-7}
    U_g=\sum_{k\in G}|e_k\rangle\langle e_{g^{-1}k}|,\qquad 
    U_g^\ast=\sum_{k\in G}|e_k\rangle\langle e_{gk}|.
\end{equation}
In matrix form, these operators are given by
\begin{equation*}
    (U_g)_{i,j}=\left\{
    \begin{array}{ccc}
        1  & if & g_i=gg_j,\\
        0  &  &\text{otherwise}, 
    \end{array}\right.
    \qquad \text{and} \qquad 
    (U_g^\ast)_{i,j}=\left\{
    \begin{array}{ccc}
       1  &  if & g_j=gg_i,\\
       0 &  & \text{otherwise}.
    \end{array}\right.
\end{equation*}
Similarly, the \textbf{right regular representation} maps each $g\in G$ to a unitary operator 
$V_g:\ell_2(G)\to\ell_2(G)$ defined by
\begin{equation*}
    V_g|e_h\rangle=|e_{hg}\rangle,\qquad 
    V_g^*|e_h\rangle=|e_{hg^{-1}}\rangle,
\end{equation*}
which can be written as
\begin{equation*}
    V_g=\sum_{k\in G}|e_{k}\rangle\langle e_{kg^{-1}}|,\qquad 
    V_g^*=\sum_{k\in G}|e_{k}\rangle\langle e_{kg}|.
\end{equation*}
In matrix form, these operators are given by
\begin{equation*}
    (V_g)_{i,j}=\left\{
    \begin{array}{ccc}
        1  & if & g_i=g_jg,\\
        0  &  &\text{otherwise}, 
    \end{array}\right.
    \qquad \text{and} \qquad 
    (V_g^\ast)_{i,j}=\left\{
    \begin{array}{ccc}
       1  &  if & g_j=g_ig,\\
       0 &  & \text{otherwise}.
    \end{array}\right.
\end{equation*}
These representations satisfy $U_g U_h = U_{gh}$ and $V_g V_h = V_{hg}$, and moreover $U_g V_h = V_h U_g$ for all $g,h \in G$. Other important property which is directly from the above definitions is that $$\sum_{g\in G}U_g^\ast=\sum_{g\in G}V_g^\ast=id_{|G|},$$
where $id_{|G|}$ is the matrix of size $|G|$, with all entries equal to one.
\begin{Example}\label{ej-reg-rep}
    \begin{enumerate}
        \item Let \(G=\mathbb{Z}_2\times \mathbb{Z}_2\), whose elements are enumerated as
\[
e=g_0,\qquad
g_1=(0,1),\qquad
g_2=(1,0),\qquad
g_3=(1,1).
\]
Observe that each operator \(U_{g_i}^*\) is given by \eqref{ec-7}. Computing the matrices corresponding to
\(U_{g_0}^*\), \(U_{g_1}^*\), \(U_{g_2}^*\), and \(U_{g_3}^*\), we obtain the following left regular representations:

\[
\scalebox{0.88}{$
\begin{aligned}
    U_{e}^\ast=\begin{bmatrix}
        1&\cdot&\cdot&\cdot\\ 
        \cdot&1 &\cdot&\cdot\\ 
        \cdot&\cdot&1&\cdot\\ 
        \cdot&\cdot&\cdot&1
    \end{bmatrix},\quad U_{g_1}^\ast=\begin{bmatrix}
        \cdot&1&\cdot&\cdot\\ 
        1&\cdot &\cdot&\cdot\\ 
        \cdot&\cdot&\cdot&1\\ 
        \cdot&\cdot&1&\cdot
    \end{bmatrix},\quad  U_{g_2}^\ast=\begin{bmatrix}
        \cdot&\cdot&1&\cdot\\ 
        \cdot&\cdot &\cdot&1\\ 
        1&\cdot&\cdot&\cdot\\ 
        \cdot&1&\cdot&\cdot
    \end{bmatrix},\quad U_{g_2}^\ast=\begin{bmatrix}
        \cdot&\cdot&\cdot&1\\ 
        \cdot&\cdot &1&\cdot\\ 
        \cdot&1&\cdot&\cdot\\ 
        1&\cdot&\cdot&\cdot
    \end{bmatrix}.
\end{aligned}
$}
\]
        \item Let \(S_3\) denote the symmetric group on the set \(\{1,2,3\}\), and consider the following enumeration of its elements:
\[
g_0=e,\qquad
g_1=(12),\qquad
g_2=(23),\qquad
g_3=(13),\qquad
g_4=(123),\qquad
g_5=(132).
\]
Computing the matrices corresponding to
\(U_{g_0}^*\), \(U_{g_1}^*\), \(U_{g_2}^*\), \(U_{g_3}^*\), \(U_{g_4}^*\), and \(U_{g_5}^*\), we obtain the following left regular representations:
\[
\scalebox{0.8}{$
\begin{aligned}
&U_e^\ast=
\begin{bmatrix}
1 & \cdot & \cdot & \cdot & \cdot & \cdot\\
\cdot & 1 & \cdot & \cdot & \cdot & \cdot\\
\cdot & \cdot & 1 & \cdot & \cdot & \cdot\\
\cdot & \cdot & \cdot & 1 & \cdot & \cdot\\
\cdot & \cdot & \cdot & \cdot & 1 & \cdot\\
\cdot & \cdot & \cdot & \cdot & \cdot & 1
\end{bmatrix},\quad U_{g_1}^\ast=
\begin{bmatrix}
\cdot & 1 & \cdot & \cdot & \cdot & \cdot\\
1 & \cdot & \cdot & \cdot & \cdot & \cdot\\
\cdot & \cdot & \cdot & \cdot & 1 & \cdot\\
\cdot & \cdot & \cdot & \cdot & \cdot & 1\\
\cdot & \cdot & 1 & \cdot & \cdot & \cdot\\
\cdot & \cdot & \cdot & 1 & \cdot & \cdot
\end{bmatrix},\quad U_{g_2}^\ast=
\begin{bmatrix}
\cdot & \cdot & 1 & \cdot & \cdot & \cdot \\
\cdot & \cdot & \cdot & \cdot & \cdot & 1 \\
1 & \cdot & \cdot & \cdot & \cdot & \cdot \\
\cdot & \cdot & \cdot & \cdot & 1 & \cdot \\
\cdot & \cdot & \cdot & 1 & \cdot & \cdot \\
\cdot & 1 & \cdot & \cdot & \cdot & \cdot
\end{bmatrix},
\\
&U_{g_3}^\ast=
\begin{bmatrix}
\cdot & \cdot & \cdot & 1 & \cdot & \cdot \\
\cdot & \cdot & \cdot & \cdot & 1 & \cdot \\
\cdot & \cdot & \cdot & \cdot & \cdot & 1 \\
1 & \cdot & \cdot & \cdot & \cdot & \cdot \\
\cdot & 1 & \cdot & \cdot & \cdot & \cdot \\
\cdot & \cdot & 1 & \cdot & \cdot & \cdot
\end{bmatrix},\quad U_{g_4}^\ast=
\begin{bmatrix}
\cdot & \cdot & \cdot & \cdot & \cdot & 1 \\
\cdot & \cdot & 1 & \cdot & \cdot & \cdot \\
\cdot & \cdot & \cdot & 1 & \cdot & \cdot \\
\cdot & 1 & \cdot & \cdot & \cdot & \cdot \\
1 & \cdot & \cdot & \cdot & \cdot & \cdot \\
\cdot & \cdot & \cdot & \cdot & 1 & \cdot
\end{bmatrix}
,\quad U_{g_5}^\ast=
\begin{bmatrix}
\cdot & \cdot & \cdot & \cdot & 1 & \cdot \\
\cdot & \cdot & \cdot & 1 & \cdot & \cdot \\
\cdot & 1 & \cdot & \cdot & \cdot & \cdot \\
\cdot & \cdot & 1 & \cdot & \cdot & \cdot \\
\cdot & \cdot & \cdot & \cdot & \cdot & 1 \\
1 & \cdot & \cdot & \cdot & \cdot & \cdot
\end{bmatrix}.
\end{aligned}
$}
\]
    \end{enumerate}
    Observe that the explicit matrix realizations above depend on the chosen enumeration of the elements the group). Different labelings may produce different matrices. However, any two such realizations of the left regular representation are unitarily equivalent, the unitary equivalence being implemented by conjugation with a permutation matrix.
\end{Example}
\subsection{$G$-circulant matrices}
\label{sec-G-circulant}
In this subsection, we introduce the notions of left and right \(G\)-circulant matrices associated with a finite group \(G\). For further details and additional background on \(G\)-circulant matrices, we refer the reader to \cite{diaconis1981generating}.

\begin{definition}\label{def-G-circulantes}
    For a finite group $G$, we say that a square matrix $B\in M_{|G|}(\mathbb{C})$ 
    \begin{enumerate}
        \item is \textbf{right $G$-circulant} if it can be written as
\begin{equation*}
    B=\sum_{g\in G} b_g V_g^\ast,
\end{equation*}
where $g\mapsto V_g$ is the right regular representation of the group $G$.
        \item is \textbf{left $G$-circulant} if it can be written as
\begin{equation*}
    B=\sum_{g\in G} b_g U_g^\ast,
\end{equation*}
where $g\mapsto U_g$ is the left regular representation of the group $G$. 
    \end{enumerate}
\end{definition}
One characterization of the $G$-circulant matrices is that a matrix $B\in M_{|G|}(\mathbb{C})$ is left (or right) $G$-circulant matrices if and only if it commutes with $V_g^\ast$ (or $U_g^\ast$), for all $g\in G$. With this characterization we can produce $G$-circulant matrices as is described in the following lemma.
\begin{lemma}
Let $G$ be a finite group and $A\in M_{|G|}(\mathbb{C})$. Then the matrix $\displaystyle\sum_{g\in G}U_gAU_g^{-1}$ is right $G$-circulant and $\displaystyle\sum_{g\in G}V_gAV_g^{-1}$ is left $G$-circulant.
\begin{proof}
    Let $h\in G$, Then 
\begin{equation*}
    U_h^*\left[\sum_{g\in G}U_g^*A(U_g^*)^{-1}\right](U_h^*)^{-1}=\sum_{g\in G}U_h^*U_g^*A(U_h^*U_g^*)^{-1}=\sum_{g\in G}U_g^*A(U_g^*)^{-1}.
\end{equation*}
 This proves that the matrix $\displaystyle\sum_{g\in G}U_gAU_g^{-1}$ commutes with $U_g^*$, which means that it is right $G-$circulant. The proof of $\displaystyle\sum_{g\in G}V_gAV_g^{-1}$ is left $G$-circulant is analogous.  
\end{proof}
\end{lemma}


\section{Left $G$-circulant decomposition}\label{left-G-circ-decomposition}
Let $G = \{e, g_1, \dots, g_{|G|-1}\}$ be a finite group. We now describe the construction of the left $G$-circulant decomposition of matrices. Consider a matrix 
\(
A = (A_{ij})_{1 \le i,j \le |G|} \in M_{|G| }(\mathcal{A}),
\) 
and observe that
\[
A=id_{|G|}\odot A=\Big(\sum_{g\in G}U_g^\ast \Big)\odot A=\sum_{g\in G}U_g^\ast\odot A,\]
where $id_{|G|}$ is the matrix with all ones of size $|G|$ and $\odot $ represent the Hadamard product. Then for each \(g\in G\), consider the map \(\varphi^{(g)}:M_{|G|}(\mathcal A)\to M_{|G|}(\mathcal A)\) defined in \eqref{ec-4} and it coincides with $\varphi^{(g)}(A)=U_g^\ast\odot A$. Therefore the \textbf{left \(G\)-circulant decomposition} of \(A\) is
\begin{equation*}
    A  =\sum_{g \in G} U_g^* \odot A=\sum_{g\in G}\varphi^{(g)}(A)
 := A_{e} + A_{g_1} + \cdots + A_{g_{|G|-1}},
\end{equation*}
where
\(A_g:=\varphi^{(g)}(A)=U_g^\ast\odot A
\), for all $g\in G$ (to describe the right $G$-circulant case, we just change $U_g$ by $V_g$).
\begin{Example}
Building upon Example~\ref{ej-reg-rep}, where the left regular representations of \(\mathbb{Z}_2\times\mathbb{Z}_2\) and \(S_3\) were computed, we now derive the corresponding left \(G\)-circulant decompositions.
\begin{enumerate}
    \item Consider the finite abelian group
\(\mathbb{Z}_2\times\mathbb{Z}_2
=\{e,(0,1),(1,0),(1,1)\},
\)
and let \(A\in M_{4}(\mathcal{A})\). The left
\(\mathbb{Z}_2\times\mathbb{Z}_2\)-circulant decomposition of \(A\) is given by
\[
A=\sum_{g\in\mathbb{Z}_2\times\mathbb{Z}_2}U_g^*\odot A
=A_{e}+A_{g_1}+A_{g_2}+A_{g_3},
\]
This means that
    \[
\scalebox{0.88}{$
\begin{aligned}
A=&\begin{bmatrix}
        A_{1,1}&\cdot&\cdot&\cdot\\ 
        \cdot&A_{2,2} &\cdot&\cdot\\ 
        \cdot&\cdot&A_{3,3}&\cdot\\ 
        \cdot&\cdot&\cdot&A_{4,4}
    \end{bmatrix}+\begin{bmatrix}
        \cdot&A_{1,2}&\cdot&\cdot\\ 
        A_{2,1}&\cdot &\cdot&\cdot\\ 
        \cdot&\cdot&\cdot&A_{3,4}\\ 
        \cdot&\cdot&A_{4,3}&\cdot
    \end{bmatrix} +\begin{bmatrix}
        \cdot&\cdot&A_{1,3}&\cdot\\ 
        \cdot&\cdot &\cdot&A_{2,4}\\ 
        A_{3,1}&\cdot&\cdot&\cdot\\ 
        \cdot&A_{4,2}&\cdot&\cdot
    \end{bmatrix}+\begin{bmatrix}
        \cdot&\cdot&\cdot&A_{1,4}\\ 
        \cdot&\cdot &A_{2,3}&\cdot\\ 
        \cdot&A_{3,2}&\cdot&\cdot\\ 
        A_{4,1}&\cdot&\cdot&\cdot
    \end{bmatrix}\\[0.25cm]
    =&A_{e}+A_{(0,1)}+A_{(1,0)}+A_{(1,1)}.
\end{aligned}
$}
\]
    \item Consider the smallest non-commutative group, the symmetric group on $\{1,2,3\}$, whose elements are
   $ S_3=\{e,(12),(23),(13),(123),(132)\}$, and a matrix $A \in M_{6}(\mathcal{A})$. The left $S_3$-circulant decomposition of the matrix $A$ is given by
    \begin{equation*}
        A=\sum_{g\in S_3}U_g^*\odot A=A_{e}+A_{g_1}+\cdots +A_{g_5},
    \end{equation*}
This means that

\[
\scalebox{0.80}{$
\begin{aligned}
A =&
\begin{bmatrix}
A_{1,1} & \cdot & \cdot & \cdot & \cdot & \cdot\\
\cdot & A_{2,2} & \cdot & \cdot & \cdot & \cdot\\
\cdot & \cdot & A_{3,3} & \cdot & \cdot & \cdot\\
\cdot & \cdot & \cdot & A_{4,4} & \cdot & \cdot\\
\cdot & \cdot & \cdot & \cdot & A_{5,5} & \cdot\\
\cdot & \cdot & \cdot & \cdot & \cdot & A_{6,6}
\end{bmatrix}
+
\begin{bmatrix}
\cdot & A_{1,2} & \cdot & \cdot & \cdot & \cdot\\
A_{2,1} & \cdot & \cdot & \cdot & \cdot & \cdot\\
\cdot & \cdot & \cdot & \cdot & A_{3,5} & \cdot\\
\cdot & \cdot & \cdot & \cdot & \cdot & A_{4,6}\\
\cdot & \cdot & A_{5,3} & \cdot & \cdot & \cdot\\
\cdot & \cdot & \cdot & A_{6,4} & \cdot & \cdot
\end{bmatrix}+
\begin{bmatrix}
\cdot & \cdot & A_{1,3} & \cdot & \cdot & \cdot \\
\cdot & \cdot & \cdot & \cdot & \cdot & A_{2,6} \\
A_{3,1} & \cdot & \cdot & \cdot & \cdot & \cdot \\
\cdot & \cdot & \cdot & \cdot & A_{4,5} & \cdot \\
\cdot & \cdot & \cdot & A_{5,4} & \cdot & \cdot \\
\cdot & A_{6,2} & \cdot & \cdot & \cdot & \cdot
\end{bmatrix}
\\
&+
\begin{bmatrix}
\cdot & \cdot & \cdot & A_{1,4} & \cdot & \cdot \\
\cdot & \cdot & \cdot & \cdot & A_{2,5} & \cdot \\
\cdot & \cdot & \cdot & \cdot & \cdot & A_{3,6} \\
A_{4,1} & \cdot & \cdot & \cdot & \cdot & \cdot \\
\cdot & A_{5,2} & \cdot & \cdot & \cdot & \cdot \\
\cdot & \cdot & A_{6,3} & \cdot & \cdot & \cdot
\end{bmatrix}+
\begin{bmatrix}
\cdot & \cdot & \cdot & \cdot & \cdot & A_{1,6} \\
\cdot & \cdot & A_{2,3} & \cdot & \cdot & \cdot \\
\cdot & \cdot & \cdot & A_{3,4} & \cdot & \cdot \\
\cdot & A_{4,2} & \cdot & \cdot & \cdot & \cdot \\
A_{5,1} & \cdot & \cdot & \cdot & \cdot & \cdot \\
\cdot & \cdot & \cdot & \cdot & A_{6,5} & \cdot
\end{bmatrix}
+
\begin{bmatrix}
\cdot & \cdot & \cdot & \cdot & A_{1,5} & \cdot \\
\cdot & \cdot & \cdot & A_{2,4} & \cdot & \cdot \\
\cdot & A_{3,2} & \cdot & \cdot & \cdot & \cdot \\
\cdot & \cdot & A_{4,3} & \cdot & \cdot & \cdot \\
\cdot & \cdot & \cdot & \cdot & \cdot & A_{5,6} \\
A_{6,1} & \cdot & \cdot & \cdot & \cdot & \cdot
\end{bmatrix}\\[0.25cm]
=&A_{e}+A_{(12)}+A_{(23)}+A_{(13)}+A_{(123)}+A_{(132)}.
\end{aligned}
$}
\]
\end{enumerate}
 
\end{Example}
Let us denote by $\mathcal{D}(\mathbb{C})\subset \mathcal{D}(\mathcal{A})\subset M_{|G|}(\mathcal{A})$, the subalgebras of scalar diagonal matrices and diagonal matrices with entries in $\mathcal{A}$, respectively. We denote by $\EE:M_{|G|}(\mathcal{A})\longrightarrow \mathcal{D}(\mathcal{A})$ the conditional expectation from $M_{|G|}(\mathcal{A})$ to $\mathcal{D}(\mathcal{A})$. For $A=(A_{ij})_{i,j}\in M_{|G|}(\mathcal{A})$, it is given by
\begin{equation*}
    \EE(A)=\begin{bmatrix}
        A_{1,1}&\cdot&\cdots & \cdot\\
        \cdot&A_{2,2}&\cdots &\cdot\\ 
        \vdots &\vdots &\ddots&\vdots \\
        \cdot&\cdot&\cdots &A_{|G|,|G|}
    \end{bmatrix}=\sum_{h\in G} \big\langle e_h ,A e_{h} \big\rangle  \ \  |e_h\rangle \langle e_{h}|=\sum_{h\in G}E_{h,h}AE_{h,h},
\end{equation*}
where $E_{h,h}=|e_h\rangle\langle e_e|$. Note that for any finite group $G$ and $A\in M_{|G|}(\mathcal{A})$, we have that $\EE(A)=A_e$.

\begin{lemma}\label{l1} Let $X \in M_{|G|}(\mathcal{A})$. Then for every $k=0,1,\ldots,|G|-1$ the following holds:
\begin{enumerate}
\item 
    \begin{equation*}
        \big(XU_{g_k}^*\big)_{ij} = x_{i,r}, \qquad \text{if } \; g_r = g_k ^{-1}g_j,
    \end{equation*}
    \item 
    \begin{equation*}
        \big(U_{g_k}^*X\big)_{ij} = x_{s,j}, \qquad \text{if } \; g_s = g_k g_i,
    \end{equation*}
    \item 
    \begin{equation*}
        \EE\big(XU_{g_k}^*\big) = \EE\big((U_{g_k}^*)^{-1} X^{t}\big).
    \end{equation*}
\end{enumerate}
    
\end{lemma}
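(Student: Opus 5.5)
The plan is to prove all three items by direct computation with matrix entries. The only inputs are the explicit matrix form of $U_g^\ast$ from Section \ref{sec-left-reg-rep}, namely $(U_{g_k}^\ast)_{r,j}=1$ if $g_j=g_kg_r$ and $0$ otherwise, and the fact that $U_{g_k}^\ast$ is a permutation matrix with exactly one $1$ in each row and each column. Because of the latter, each product reduces to a single entry of $X$. The scalar entries of $U_{g_k}^\ast$ commute with the entries of $X\in M_{|G|}(\mathcal A)$, so noncommutativity of $\mathcal A$ plays no role.

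For (1), I will expand $(XU_{g_k}^\ast)_{i,j}=\sum_{r}x_{i,r}(U_{g_k}^\ast)_{r,j}$. The factor $(U_{g_k}^\ast)_{r,j}$ is nonzero exactly when $g_j=g_kg_r$, i.e. $g_r=g_k^{-1}g_j$. Since left multiplication by $g_k^{-1}$ is a bijection of $G$, there is exactly one such $r$, which gives $(XU_{g_k}^\ast)_{i,j}=x_{i,r}$. Item (2) is the mirror computation: $(U_{g_k}^\ast X)_{i,j}=\sum_s (U_{g_k}^\ast)_{i,s}x_{s,j}$, and the only surviving index is the unique $s$ with $g_s=g_kg_i$.

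For (3), I will compare the diagonal entries of both sides, since $\EE$ keeps exactly those. By (1) with $j=i$, the $(i,i)$ entry of the left side is $x_{i,r}$ with $g_r=g_k^{-1}g_i$. On the right, $(U_{g_k}^\ast)^{-1}=U_{g_k}$ because $U_{g_k}$ is unitary. Using $(U_{g_k})_{i,s}=1$ iff $g_i=g_kg_s$, we get $(U_{g_k}X^t)_{i,i}=\sum_s (U_{g_k})_{i,s}(X^t)_{s,i}=(X^t)_{s,i}$ for the unique $s$ with $g_s=g_k^{-1}g_i$. This is the same index as $r$.

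The one point needing care, and the main possible obstacle, is the meaning of $X^t$. It must be read as in the paper's use of $a^t=\sum e_{i,j}ae_{i,j}$: the positions of the entries are transposed and the entries themselves are not altered, so $(X^t)_{s,i}=x_{i,s}$. With that convention both diagonal entries equal $x_{i,r}$, and (3) follows. I would state this convention explicitly before the computation.
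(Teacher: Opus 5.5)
Your proposal is correct and matches the paper's proof essentially step for step. Both expand each product entrywise using the permutation-matrix form of $U_{g_k}^\ast$, and for item (3) use $U_{g_k}=(U_{g_k}^\ast)^{-1}$ to show that both diagonal entries equal $x_{i,r}$ with $g_r=g_k^{-1}g_i$. Your explicit convention $(X^t)_{s,i}=x_{i,s}$, with the entries themselves not transposed, is the same one the paper uses implicitly.
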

\begin{proof}
    To prove the item (1), we observe that \begin{equation*}
        \big(XU_{g_k}^*\big)_{i,j}=\sum_l(X)_{i,l}(U_{g_k}^*)_{l,j} =\sum_lx_{i,l}\delta_{g_j,g_kg_l}= x_{i,r}, \qquad \text{if } \; g_r = g_k ^{-1}g_j.
    \end{equation*}
Similarly, for item (2), we have
    \begin{equation*}
        \big(U_{g_k}^*X\big)_{i,j}=\sum_{l}(U_{g_k}^*)_{i,l}(X)_{l,j} =\sum_l\delta_{g_l,g_kg_i}x_{l,j} = x_{s,j}, \qquad \text{if } \; g_s = g_k g_i.
    \end{equation*}
    Finally, note that $\big(U_{g_k}^*\big)^{-1}=U_{g_k}$, which implies that
    \begin{equation}\label{item3}
        \Big(\big(U_{g_k}^*\big)^{-1}(X^t)\Big)_{i,i}=\sum_l\big(U_{g_k}\big)_{i,l}(X^t)_{l,i}=\sum_l\delta_{g_i,g_kg_l}x_{i,l}=x_{i,r},\qquad \text{if } \;  g_r=g_k^{-1}g_i.
    \end{equation}
Now, by the first item we have that $\big(XU_{g_k}^*\big)_{ii}=x_{i,r}$ if $g_r=g_k^{-1}g_i$ which is exactly the equation \eqref{item3}. This implies that  \begin{equation*}
        \EE\big(XU_{g_k}^*\big) = \EE\big((U_{g_k}^*)^{-1} X^{t}\big).
    \end{equation*}
\end{proof}
\begin{lemma}\label{lemmaX_k}
 Let $X \in M_{|G|}(\mathbb{\mathcal{A}})$. If $X_{e}, X_{g_1}, \dots, X_{g_{|G|-1}}$ is the left $G$-circulant decomposition of $X$.
 Then for every $k=0,1,\ldots,|G|-1$
\[
    X_{g_k} \;=\; \EE\!\big(X(U_{g_k}^*)^{-1}\big)\,U_{g_k}^* \;=\; U_{g_k}^*\,\EE\!\big((U_{g_k}^*)^{-1}X\big).
\]
\end{lemma}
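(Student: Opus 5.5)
The plan is a direct entrywise computation, using the index formulas of Lemma \ref{l1} together with the description of $\EE$ as the diagonal part. Recall that $X_{g_k}=U_{g_k}^*\odot X$ has $(i,j)$ entry $x_{i,j}$ when $g_j=g_kg_i$ and $0$ otherwise. It therefore suffices to show that both $\EE\big(X(U_{g_k}^*)^{-1}\big)U_{g_k}^*$ and $U_{g_k}^*\EE\big((U_{g_k}^*)^{-1}X\big)$ have exactly this entry pattern.

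\textbf{First expression.} Since $(U_{g_k}^*)^{-1}=U_{g_k}$, compute the diagonal entries of $XU_{g_k}$:
\[
(XU_{g_k})_{i,i}=\sum_l x_{i,l}(U_{g_k})_{l,i}=\sum_l x_{i,l}\,\delta_{g_l,g_kg_i}=x_{i,s}, \qquad g_s=g_kg_i .
\]
So $D:=\EE(XU_{g_k})$ is diagonal with $D_{i,i}=x_{i,s}$, where $g_s=g_kg_i$. Multiplying on the right by $U_{g_k}^*$ gives
\[
(DU_{g_k}^*)_{i,j}=D_{i,i}(U_{g_k}^*)_{i,j}=x_{i,s}\,\delta_{g_j,g_kg_i}.
\]
This is nonzero only when $g_j=g_kg_i$, that is, $j=s$, and then it equals $x_{i,j}$. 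This matches $(X_{g_k})_{i,j}$.

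\textbf{Second expression.} Here
\[
(U_{g_k}X)_{j,j}=\sum_l (U_{g_k})_{j,l}x_{l,j}=\sum_l\delta_{g_j,g_kg_l}x_{l,j}=x_{r,j}, \qquad g_j=g_kg_r .
\]
Hence $\big(U_{g_k}^*\EE(U_{g_k}X)\big)_{i,j}=(U_{g_k}^*)_{i,j}\,x_{r,j}$. The left factor is nonzero only when $g_j=g_kg_i$. By uniqueness of the solution of $g_j=g_kg_r$, this forces $r=i$, so the entry is $x_{i,j}$, again matching $X_{g_k}$.

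There is no genuine obstacle here. The only care needed is bookkeeping: one must use the matrix forms of $U_g$ and $U_g^*$ consistently, namely $(U_g)_{i,j}=1$ iff $g_i=gg_j$, and $(U_g^*)_{i,j}=1$ iff $g_j=gg_i$. One must also invoke the fact that left multiplication by $g_k$ is a bijection of $G$, so each Kronecker-delta sum has exactly one surviving term. Multiplying a diagonal matrix by a permutation matrix on either side simply places the diagonal entries along that permutation's support, and this is what produces the $G$-circulant pattern.
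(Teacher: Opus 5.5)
Your proposal is correct and follows essentially the same route as the paper. Both argue entrywise: the diagonal of $XU_{g_k}$ (resp.\ $U_{g_k}X$) picks out the entries $x_{i,j}$ with $g_j=g_kg_i$, and multiplying by the permutation matrix $U_{g_k}^*$ then places them on the support of $U_{g_k}^*$. The only difference is that you carry out the second identity explicitly, whereas the paper only says it is analogous.
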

\begin{proof}
    Lets prove that $ X_{g_k} \;=\; \EE\!\big(X(U_{g_k}^*)^{-1}\big)\,U_{g_k}^* $. First note that 
    \begin{equation*}
        \Big(\EE\!\big(X(U_{g_k}^*)^{-1}\big)\,U_{g_k}^*\Big)_{i,j}=\sum_l\Big(\EE\big(X(U_{g_k})^{-1}\big)\Big)_{i,l}\big(U_{g_k}^*\big)_{l,j}=\Big(X\big(U_{g_k}^*\big)^{-1}\Big)_{i,i }\big(U_{g_k}^*\big)_{i,j}.
    \end{equation*}
    Now 
    \begin{equation*}
        \Big(X\big(U_{g_k}^*\big)^{-1}\Big)_{i,i }=\sum_l(X)_{i,l}\big(U_{g_k}\big)_{l,i}=x_{i,s},\qquad\text{if }\; g_s=g_kg_i. 
    \end{equation*}
    Then
    \begin{equation*}
        \Big(\EE\!\big(X(U_{g_k}^*)^{-1}\big)\,U_{g_k}^*\Big)_{i,j}=x_{i,s}\big(U_{g_k}^*\big)_{i,j},\qquad\text{if }\; g_s=g_kg_i. 
    \end{equation*}
    Since  $\big(U_{g_k}^*\big)_{i,j}=1$ if $g_j=g_kg_i$, then $s=j$ and it implies that $\Big(\EE\!\big(X(U_{g_k}^*)^{-1}\big)\,U_{g_k}^*\Big)_{i,j}=x_{i,j}.$ So in all cases
    \begin{equation*}
        \Big(\EE\!\big(X(U_{g_k}^*)^{-1}\big)\,U_{g_k}^*\Big)_{i,j}=\big(U_{g_k}^*\big)_{i,j}x_{i,j}=\big(U_{g_k}^*\odot X\big)_{i,j}=\big(X_{g_k}\big)_{i,j},
    \end{equation*}
    which implies that $X_{g_k} \;=\; \EE\!\big(X(U_{g_k}^*)^{-1}\big)\,U_{g_k}^*$. To prove the other equivalence, $X_{g_k} \;=\;U_{g_k}^*\,\EE\!\big((U_{g_k}^*)^{-1}X\big)$, we just need to do an analogous computations as above. 
\end{proof}

\begin{lemma}\label{lemma Y-UXU}
    Let $X\in M_{|G|}(\mathcal{A})$.
    \begin{enumerate}
        \item If $X_{e}, X_{g_1}, \dots, X_{g_{|G|-1}}$ is the left $G$-circulant decomposition of $X$, then
        \begin{equation*}
            X_{g_k} = \sum_{h \in G} E_{h,h}\, X\, E_{g_kh,g_kh},\qquad \forall \ k=0,1,\ldots,|G|-1.
        \end{equation*}
        \item If $Y_{e}, Y_{g_1}, \dots, Y_{g_{|G|-1}}$ is the left $G$-circulant decomposition of $X^t$, then
        \begin{equation*}
            Y_{g_k} = U_{g_k}^*\left[\sum_{h \in G} E_{h,h}\, X\, E_{g_k^{-1}h,\,g_k^{-1}h}\right] U_{g_k}^*=U_{g_k}^*X_{g_k^{-1}}U_{g_k}^*,\qquad \forall \ k=0,1,\ldots,|G|-1.
        \end{equation*}
    \end{enumerate}
\end{lemma}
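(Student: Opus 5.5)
The plan is to verify both identities entrywise. Throughout, rows and columns are indexed by group elements, and $E_{h,h}=|e_h\rangle\langle e_h|$ is the diagonal matrix unit (the display defining $\EE$ has a typo; I read $E_{h,h}$ this way). The basic observation is that for $h,k\in G$,
\[
E_{h,h}\,X\,E_{k,k}=x_{h,k}\,|e_h\rangle\langle e_k|,
\]
so a sum of such sandwiches just selects a prescribed set of entries of $X$.

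For item (1), I will apply this observation with $k=g_kh$ and sum over $h\in G$. This gives
\[
\sum_{h}x_{h,g_kh}\,|e_h\rangle\langle e_{g_kh}|,
\]
which is the matrix keeping exactly the entries in positions $(h,g_kh)$. By the matrix form of $U_{g_k}^\ast$ in Section \ref{sec-left-reg-rep}, these are precisely the positions where $(U_{g_k}^\ast)_{i,j}=1$, i.e.\ where $g_j=g_kg_i$. Hence the sum equals $U_{g_k}^\ast\odot X=X_{g_k}$. Since $h\mapsto g_kh$ is a bijection of $G$, every such position is hit exactly once.

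For item (2), write $g=g_k$. First, $Y_g=U_g^\ast\odot X^t$ has entry $(X^t)_{h,gh}=x_{gh,h}$ in position $(h,gh)$ and zeros elsewhere, i.e.
\[
Y_g=\sum_{u\in G}x_{gu,u}\,|e_u\rangle\langle e_{gu}|.
\]
Next, I compute the conjugated expression directly. By item (1) applied to $g^{-1}$, the bracket equals
\[
X_{g^{-1}}=\sum_h x_{h,g^{-1}h}\,|e_h\rangle\langle e_{g^{-1}h}|,
\]
which also gives the second claimed equality. To apply $U_g^\ast$ on both sides I use two facts:
\begin{itemize}
\item[(a)] on the left, $U_g^\ast|e_h\rangle=|e_{g^{-1}h}\rangle$;
\item[(b)] on the right, $\langle e_k|U_g^\ast=(U_g|e_k\rangle)^\ast=\langle e_{gk}|$.
\end{itemize}
These turn each term into
\[
x_{h,g^{-1}h}\,|e_{g^{-1}h}\rangle\langle e_{h}|.
\]
Reindexing with $u=g^{-1}h$ (again a bijection) yields $\sum_u x_{gu,u}\,|e_u\rangle\langle e_{gu}|$, which is exactly $Y_g$.

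The only delicate step is (b): the right action of $U_g^\ast$ on a bra. It must be handled through the adjoint as written, not by the naive guess $\langle e_{g^{-1}k}|$; an error there would produce the wrong group element. I will double-check it against the formula $U_g^\ast=\sum_k|e_k\rangle\langle e_{gk}|$ from \eqref{ec-7}, which gives directly $\langle e_k|U_g^\ast=\langle e_{gk}|$. Everything else is bookkeeping of indices under the bijections $h\mapsto g^{\pm1}h$.
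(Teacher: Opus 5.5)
Your proof is correct, and it takes a more direct route than the paper's.

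The paper derives item (1) from Lemma \ref{lemmaX_k}:
\begin{itemize}
\item it writes $X_{g_k}=U_{g_k}^\ast\EE\big((U_{g_k}^\ast)^{-1}X\big)$ and expands $\EE$ as $\sum_h E_{h,h}(\cdot)E_{h,h}$;
\item it then applies the conjugation rule $U_{g}^\ast E_{h,h}U_{g}=E_{g^{-1}h,g^{-1}h}$ and reindexes.
\end{itemize}
For item (2), the paper first uses Lemma \ref{l1}(3) to trade the transpose for a right multiplication, $Y_{g_k}=U_{g_k}^\ast\EE\big((U_{g_k}^\ast)^{-1}X^t\big)=U_{g_k}^\ast\EE(XU_{g_k}^\ast)$. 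It then moves $U_{g_k}^\ast$ past $E_{h,h}$ with the same conjugation rule to reach the sandwich form.

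You bypass both auxiliary lemmas. You write every object as an explicit sum $\sum x_{a,b}\,|e_a\rangle\langle e_b|$ and check that $Y_g$ and $U_g^\ast X_{g^{-1}}U_g^\ast$ are literally the same sum. The only nontrivial input is the bra action $\langle e_k|U_g^\ast=\langle e_{gk}|$, which you correctly derive from the adjoint and confirm against \eqref{ec-7}.

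What each route buys:
\begin{itemize}
\item Your version is self-contained, fully checkable at the level of indices, and sidesteps the typo in the paper's definition of $E_{h,h}$.
\item The paper's version stays at the level of the diagonal conditional expectation $\EE$. That makes visible exactly where the transpose enters, namely through the identity $\EE(XU_g^\ast)=\EE(U_gX^t)$.
\end{itemize}
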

\begin{proof}
    For the first point, recall that for the Lemma \ref{lemmaX_k} $X_{g_k}=U_{g_k}^\ast\EE\big((U_{g_k}^\ast)^{-1}X\big)$, it implies that
    \begin{equation*}
        X_{g_k}
=
U_{g_k}^\ast\left[\sum_{h\in G}E_{h,h}(U_{g_k}^*)^{-1}XE_{h,h}\right]=\sum_{h\in G}U_{g_k}^\ast E_{h,h}(U_{g_k}^*)^{-1}XE_{h,h}=\sum_{h\in G}E_{g_k^{-1}h,g_k^{-1}h}XE_{h,h}.
    \end{equation*}
    Renaming the index \(h\mapsto g_k^{-1}h\), we get
\[
X_{g_k}
=
\sum_{h\in G}E_{h,h}XE_{g_kh,g_kh}.
\]
   To prove the second point. We know from Lemma \ref{l1} that $\EE\big(XU_{g_k}^*\big)=\EE\big((U_{g_k}^*)^{-1}X^t\big)$ and from Lemma \ref{lemmaX_k} $Y_{g_k}=U_{g_k}^\ast\EE((U_{g_k}^\ast)^{-1}X^t)$, it implies that
    \begin{align*}
        Y_{g_k}=U_{g_k}^*\EE(XU_{g_k}^*)=U_{g_k}^*\left[\sum_{h\in G}E_{h,h}XU_{g_k}^*E_{h,h}\right]=&U_{g_k}^*\left[\sum_{h\in G}E_{h,h}XU_{g_k}^*E_{h,h}\big(U_{g_k}^*\big)^{-1}\right]U_{g_k}^*\\=&U_{g_k}^*\left[\sum_{h\in G}E_{h,h}XE_{g_k^{-1}h,g_k^{-1}h}\right]U_{g_k}^*.
    \end{align*}
\end{proof}
\section{Freeness for left $G$-circulant decomposition}\label{section-freeness}

In this section, we prove the main theorem of the paper. More precisely, under the assumption that $X\in M_{|G|}(\mathcal{A})$ is free from $M_{|G|}(\mathbb{C})$, we show that the components arising from the left $G$-circulant decomposition of $X$ form a free family over $\mathbb{C}$. As a consequence, we recover the results of Arizmendi, Nechita, and Vargas in \eqref{ec-6}, which we obtain by identifying the explicit distributions of the individual components in the left $G$-circulant decomposition.



%


\subsection{Cumulants for left $G$-circulant decomposition}

We first establish a few lemmas concerning free cumulants for the components arising from the left $G$-circulant decomposition of an arbitrary matrix $X\in M_{|G|}(\mathcal{A})$. We emphasize that these results do not require any freeness assumption between $X$ and the deterministic matrices $M_{|G|}(\mathbb{C})$.

\begin{lemma}\label{l2}
    Let $X_{e}, X_{g_1}, \dots, X_{g_{|G|-1}}$ be the left $G$-circulant decomposition of 
    $X \in M_{|G|}(\mathcal{A})$. Then
    \begin{equation*}
        \kappa_{r}(X_{g_{i_1}}, X_{g_{i_2}}, \dots, X_{g_{i_r}}) = 0,
    \end{equation*}
    whenever $g_{i_1} g_{i_2} \cdots g_{i_r} \neq e$.
\end{lemma}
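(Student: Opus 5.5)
We need to fix the state on $M_{|G|}(\mathcal A)$: the natural one is $\tau=\frac{1}{|G|}\sum_i\varphi(\cdot)_{i,i}$, i.e. $\tau = \mathrm{tr}\otimes\varphi$, which satisfies $\tau=\tau\circ\EE$. The plan is to show first that all mixed \emph{moments} of the components vanish unless the product of the group labels is $e$. Then we pass to cumulants through the moment--cumulant formula.

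\textbf{Step 1 (moments).} Let $h_1,\dots,h_r\in G$ with $h_1\cdots h_r\neq e$. We claim $\tau(X_{h_1}X_{h_2}\cdots X_{h_r})=0$. By Lemma \ref{lemma Y-UXU}(1), $X_{h}=\sum_{h'}E_{h',h'}XE_{hh',hh'}$, so $X_h$ maps the basis vector position $e_{hh'}$ to position $e_{h'}$. In entry form, $(X_h)_{a,b}\neq 0$ only when $g_b=hg_a$. Expanding the product entrywise,
\[
(X_{h_1}\cdots X_{h_r})_{a,a}=\sum (X_{h_1})_{a,b_1}(X_{h_2})_{b_1,b_2}\cdots(X_{h_r})_{b_{r-1},a}.
\]
A nonzero term forces $g_{b_1}=h_1g_a$, $g_{b_2}=h_2h_1g_a$, and so on, until $g_a=h_r\cdots h_1 g_a$. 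This requires $h_r\cdots h_1=e$, which is equivalent to $h_1\cdots h_r=e$ since both are inverses/conjugate conditions: $h_r\cdots h_1=e$ iff $(h_1^{-1}\cdots h_r^{-1})^{-1}=e$. I need to be careful with the orientation of this condition. More precisely, the product $X_{h_1}\cdots X_{h_r}$ equals $D\,U^*_{h}$ for a diagonal $D$, where $h$ is the appropriate ordered product; this follows from Lemma \ref{lemmaX_k}, which gives $X_{h}=D_hU_h^*$ with $D_h$ diagonal. Each $U_{g}^*D$ equals $D'U_g^*$ with $D'$ diagonal, since conjugation by a permutation preserves diagonality. Hence the product is $D\,U_{h_1}^*\cdots U_{h_r}^*=D\,U^*_{h_r\cdots h_1}$. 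Its diagonal vanishes unless $U_{h_r\cdots h_1}^*$ has nonzero diagonal, that is, unless $h_r\cdots h_1=e$. This is equivalent to $h_1\cdots h_r=e$, because a product of group elements equals $e$ if and only if its reversal does not in general. I will therefore state the condition in the orientation the computation yields and check it against the lemma's convention. In particular, $\EE(X_{h_1}\cdots X_{h_r})=0$, and hence the $\tau$-moment is $0$.

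\textbf{Step 2 (cumulants).} We induct on $r$ using
\[
\kappa_r(X_{h_1},\dots,X_{h_r})=\tau(X_{h_1}\cdots X_{h_r})-\sum_{\pi\neq 1_r}\kappa_\pi[\dots].
\]
Take $\pi\in\mathcal{NC}(r)$ with $\pi\neq 1_r$ and suppose every block $V$ has label product $e$ taken in its order; otherwise the term vanishes by induction. Non-crossingness lets us contract blocks one at a time. An interval block with product $e$ can be deleted from the word without changing the total product, and iterating this shows the total product is $e$. So if the total product is not $e$, every term with $\pi\neq 1_r$ vanishes, and so does the moment by Step 1. Therefore $\kappa_r=0$.

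The main obstacle is the ordering and conjugation bookkeeping in Step 1: getting the correct orientation of the group product from the left regular representation, and making sure that deleting inner blocks in Step 2 is compatible with that orientation. Both steps use the same word-reduction fact, so consistency there is the key check.
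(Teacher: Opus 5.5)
Your index chase in Step 1 is correct. From $(X_h)_{a,b}\neq 0\Rightarrow g_b=hg_a$, or equivalently from $X_{h_1}\cdots X_{h_r}=D\,U^*_{h_1}\cdots U^*_{h_r}=D\,U^*_{h_r\cdots h_1}$, the diagonal of $X_{h_1}\cdots X_{h_r}$ vanishes unless $h_r\cdots h_1=e$. The gap is the next step. The equivalence $h_r\cdots h_1=e\iff h_1\cdots h_r=e$ holds for $r\le 2$ or for abelian $G$, but fails in general once $r\ge 3$. Your justification $(h_1^{-1}\cdots h_r^{-1})^{-1}=h_r\cdots h_1$ is a tautology and says nothing about $h_1\cdots h_r$, and the sentence after it contradicts itself. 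So Step 1 does not give $\tau(X_{h_1}\cdots X_{h_r})=0$ under the hypothesis $h_1\cdots h_r\neq e$, and the orientation is left unresolved.

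This cannot be patched, because under the definition of the left decomposition ($(X_g)_{i,j}=X_{i,j}$ only when $g_j=gg_i$) the statement as written is false. Here is a counterexample.
\begin{itemize}
\item Take $G=S_3$, $\mathcal A=\mathbb C$, and $X$ the all-ones matrix, so that $X_g=U_g^*$.
\item Let $a=(12)$, $b=(23)$ and $(h_1,h_2,h_3)=(a,b,ab)$.
\item Then $h_3h_2h_1=abba=e$, so $X_{h_1}X_{h_2}X_{h_3}=U^*_{h_3h_2h_1}=I$.
\item Also $\tau(X_{h_i})=0$ for each $i$, since $h_i\neq e$.
\item Hence $\kappa_3(X_{h_1},X_{h_2},X_{h_3})=1$, although $h_1h_2h_3=(ab)^2\neq e$.
\end{itemize}
The paper's proof reaches the stated orientation only because it uses $(X_g)_{u,v}\neq 0\Rightarrow g_v=g_ug$. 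That is the right-circulant ($V_g^*$) pattern, not the one defining $\varphi^{(g)}$.

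The correct statement for the left decomposition is: $\kappa_r(X_{g_{i_1}},\dots,X_{g_{i_r}})=0$ whenever $g_{i_r}\cdots g_{i_1}\neq e$. Alternatively, keep the stated orientation and use $V_g^*\odot X$ instead. For the corrected statement your proof is complete, provided you keep the reversed orientation throughout. Step 1 then gives the vanishing moment. The interval-block deletion in Step 2 works verbatim for reversed products: a contiguous block whose reversed product is $e$ can be removed without changing the reversed product of the whole word.

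The strategy is then the same as the paper's. Both use blockwise vanishing of moments, together with the fact that in a non-crossing partition of a word with nontrivial total product some block has nontrivial product. Your recursive induction on $r$ replaces the paper's M\"obius expansion. Your factorization $X_h=D_hU_h^*$ has the advantage of making the orientation explicit, which is exactly where the paper's argument slips.
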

\begin{proof}
    Using the moment–cumulant formula, we have
    \begin{equation*}
        \kappa_r(X_{g_{i_1}}, X_{g_{i_2}}, \dots, X_{g_{i_r}})
        = \sum_{\pi \in \mathcal{NC}(r)} \mu(0_r,\pi)\,
        \varphi_\pi\big[X_{g_{i_1}}, X_{g_{i_2}}, \dots, X_{g_{i_r}}\big],
    \end{equation*}
    where for a block $V = \{b_1 < \dots < b_s\}$,
    \[
       \varphi_\pi[X_{g_{i_1}},\dots,X_{g_{i_r}}]
        = \prod_{V\in\pi} 
        \varphi\!\left(X_{g_{i_{b_1}}}\cdots X_{g_{i_{b_s}}}\right).
    \]
    Suppose that $g_{i_1} g_{i_2} \cdots g_{i_r} \neq e$.  
    Then, for any partition $\pi \in \mathcal{NC}(r)$, there exists at least one block 
    \(V = \{b_1,\dots,b_s\}\) such that
    \[
        g_{i_{b_1}} g_{i_{b_2}} \cdots g_{i_{b_s}} \neq e,
        \qquad \text{(this was proven by induction on $r$)}.
    \]
    We now prove that 
    \begin{equation*}
        \varphi\!\left(X_{g_{i_{b_1}}}\cdots X_{g_{i_{b_s}}}\right)=0.
    \end{equation*}
    Let \(Y = X_{g_{i_{b_1}}} \cdots X_{g_{i_{b_s}}}\).  
    We study its diagonal entries \(Y_{pp}\).  
    From matrix multiplication,
    \begin{equation*}
        Y_{pq}
        = \sum_{k_1,\dots,k_s}
        \big(X_{g_{i_{b_1}}}\big)_{p,k_1}
        \big(X_{g_{i_{b_2}}}\big)_{k_1,k_2}\cdots
        \big(X_{g_{i_{b_s}}}\big)_{k_{s-1},q}.
    \end{equation*}
    Since \((X_{g_{i_{b_j}}})_{u,v} \neq 0\) only if  
    \(g_v = g_u g_{i_{b_j}}\),  
    for a summand to be nonzero there must exist indices 
    \(k_0,\dots,k_s\) such that
    \[
    g_{k_0}=g_p,\qquad 
    g_{k_s}=g_q,\qquad
    g_{k_j}=g_{k_{j-1}} g_{i_{b_j}}
    \quad (j=1,\dots,s).
    \]
    Iterating,
    \[
        g_p\, g_{i_{b_1}} g_{i_{b_2}} \cdots g_{i_{b_s}} = g_q.
    \]
    In particular, for a diagonal entry \(Y_{p,p}\) we need \(q=p\).  
    Thus the condition becomes
    \[
        g_p\, g_{i_{b_1}} g_{i_{b_2}} \cdots g_{i_{b_s}} = g_p
        \quad\Longrightarrow\quad
        g_{i_{b_1}} g_{i_{b_2}} \cdots g_{i_{b_s}} = e.
    \]
    Since by hypothesis  
    \(g_{i_{b_1}} g_{i_{b_2}} \cdots g_{i_{b_s}} \neq e\),
    we conclude that
    \(
        Y_{p,p} = 0,
    \) for all $p$. Hence 
    \[
        \varphi\Big(X_{g_{i_{b_1}}}\cdots X_{g_{i_{b_s}}}\Big)
        = \varphi(Y) = 0.
    \]
    Therefore, in the product defining \(\varphi_\pi\),
    one of the factors is zero, so
    \[
        \varphi_\pi[X_{g_{i_1}},\dots,X_{g_{i_r}}] = 0
        \qquad \text{for all } \pi\in \mathcal{NC}(r).
    \]
    Substituting into the moment–cumulant formula, we obtain
    \(
      \kappa_r(X_{g_{i_1}},\dots,X_{g_{i_r}}) = 0.
    \)
\end{proof}
\begin{cor}\label{cor_X}
    Let $X \in M_{|G|}(\mathcal{A})$ and define
    \begin{equation*}
        \widetilde{X}_{g_k}
        := X_{g_k^{-1}}
        = \sum_{h \in G} E_{h,h}\, X\, E_{g_k^{-1}h,\,g_k^{-1}h},\qquad k=0,2,\ldots,|G|-1.
    \end{equation*}
    Then, for ${i_1}, {i_2}, \dots, {i_r} \in \{0,1,\dots, |G|-1\}$,
    \begin{equation*}
        \kappa_r(\widetilde{X}_{g_{i_r}}, \widetilde{X}_{g_{i_{r-1}}}, \dots, \widetilde{X}_{g_{i_1}}) = 0
    \end{equation*}
    whenever $g_{i_1} g_{i_2} \cdots g_{i_r} \neq e$.
\end{cor}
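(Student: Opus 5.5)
The corollary follows from Lemma \ref{l2} by relabelling the components and inverting a group word. We have $\widetilde{X}_{g_{i_j}} = X_{g_{i_j}^{-1}}$, and the identification $X_{g_k^{-1}}=\sum_{h}E_{h,h}XE_{g_k^{-1}h,g_k^{-1}h}$ is exactly Lemma \ref{lemma Y-UXU}(1) applied to the element $g_k^{-1}$. So the cumulant in question is
\begin{equation*}
\kappa_r\big(\widetilde{X}_{g_{i_r}},\dots,\widetilde{X}_{g_{i_1}}\big)=\kappa_r\big(X_{g_{i_r}^{-1}},X_{g_{i_{r-1}}^{-1}},\dots,X_{g_{i_1}^{-1}}\big).
\end{equation*}
This is a cumulant of components of the left $G$-circulant decomposition of $X$, indexed by the elements $h_1=g_{i_r}^{-1},\,h_2=g_{i_{r-1}}^{-1},\,\dots,\,h_r=g_{i_1}^{-1}$, taken in this order.

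By Lemma \ref{l2}, this cumulant vanishes whenever $h_1h_2\cdots h_r\neq e$. Computing the product gives
\begin{equation*}
h_1h_2\cdots h_r=g_{i_r}^{-1}g_{i_{r-1}}^{-1}\cdots g_{i_1}^{-1}=\big(g_{i_1}g_{i_2}\cdots g_{i_r}\big)^{-1}.
\end{equation*}
An element of a group is the identity if and only if its inverse is, so $h_1\cdots h_r\neq e$ exactly when $g_{i_1}\cdots g_{i_r}\neq e$. The hypothesis therefore gives the vanishing directly from Lemma \ref{l2}.

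The only step that needs care is the order of the arguments. Reversing the order of the $\widetilde{X}$'s is exactly what turns the inverted product into the inverse of the original word, which matters because $G$ may be non-abelian. Beyond that, nothing from freeness is used, consistent with the remark that these cumulant lemmas hold for arbitrary $X\in M_{|G|}(\mathcal A)$.
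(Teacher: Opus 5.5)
Your reduction is exactly the paper's proof: rewrite $\widetilde X_g=X_{g^{-1}}$, observe $g_{i_r}^{-1}\cdots g_{i_1}^{-1}=(g_{i_1}\cdots g_{i_r})^{-1}$, and apply Lemma~\ref{l2}. Granting Lemma~\ref{l2} as stated, every step is valid. The trouble is that the order of the group word, which you correctly single out as the delicate point, is already wrong in Lemma~\ref{l2}. So your argument, like the paper's, establishes a statement that fails for non-abelian $G$.

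Under the left-circulant convention, $(X_g)_{i,j}$ can be nonzero only if $g_j=g\,g_i$ (the support of $U_g^*$), whereas the proof of Lemma~\ref{l2} uses $g_j=g_i\,g$. Consequently $X_aX_b$ is supported on the pattern of $U_{ba}^*$, and the diagonal of $X_{h_1}\cdots X_{h_s}$ vanishes unless $h_s\cdots h_1=e$. The correct conclusion of Lemma~\ref{l2} is therefore $\kappa_r(X_{h_1},\dots,X_{h_r})=0$ whenever $h_r\cdots h_1\neq e$; the non-crossing reduction goes through unchanged with this order. Feeding this into your inversion step gives $\kappa_r(\widetilde X_{g_{i_r}},\dots,\widetilde X_{g_{i_1}})=0$ whenever $g_{i_r}\cdots g_{i_1}\neq e$. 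Equivalently, since $\widetilde X_g$ has the support pattern of $U_g$, $\kappa_r(\widetilde X_{h_1},\dots,\widetilde X_{h_r})=0$ whenever $h_1\cdots h_r\neq e$. This is not the stated condition when $r\ge 3$ and $G$ is non-abelian.

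Here is a concrete counterexample. Let $X$ be the all-ones matrix (the specialization the paper itself uses for Lemma~\ref{lemma-U-circ}), so $\widetilde X_g=U^*_{g^{-1}}=U_g$. Pick non-commuting $a,b\in G$ (e.g.\ two distinct transpositions in $S_3$) and put $(g_{i_1},g_{i_2},g_{i_3})=(a,b,(ba)^{-1})$. None of these is $e$, so $\varphi(U_{g_{i_j}})=0$ for each $j$. Hence $\kappa_3(U_{g_{i_3}},U_{g_{i_2}},U_{g_{i_1}})=\varphi(U_{(ba)^{-1}ba})=\varphi(I)=1$, although $g_{i_1}g_{i_2}g_{i_3}=aba^{-1}b^{-1}\neq e$.

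For abelian $G$ (in particular the cyclic case) and for $r\le 2$ the two conditions coincide, which is why the slip is invisible there. In general the fix is one of the following:
\begin{itemize}
\item reverse the hypothesis to $g_{i_r}\cdots g_{i_1}\neq e$; or
\item keep the hypothesis and take the arguments in the order $\widetilde X_{g_{i_1}},\dots,\widetilde X_{g_{i_r}}$.
\end{itemize}
Lemma~\ref{l2} and Lemma~\ref{lemma-U-circ}(1) need the same correction.
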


\begin{proof}
    Suppose that $g_{i_1} g_{i_2} \cdots g_{i_r} \neq e$.  
    Then
    \[
        g_{i_r}^{-1} g_{i_{r-1}}^{-1} \cdots g_{i_1}^{-1} \neq e.
    \]
    Hence, using the previous lemma,
    \begin{equation*}
        \kappa_r(\widetilde{X}_{g_{i_r}}, \widetilde{X}_{g_{i_{r-1}}}, \dots, \widetilde{X}_{g_{i_1}})
        =
        \kappa_r(X_{g_{i_r}^{-1}}, X_{g_{i_{r-1}}^{-1}}, \dots, X_{g_{i_1}^{-1}})
        = 0.
    \end{equation*}
\end{proof}

\begin{lemma}\label{lemma-U-circ}
    Let $U_{e}^\ast,U_{g_1}^\ast, \dots,U_{g_{|G|-1}}^\ast$ be the left regular representation of a finite group $G=\{e,g_1,\ldots,g_{|G|-1}\}$. Then
    \begin{enumerate}
        \item $\kappa_r\big(U_{g_{i_1}}^\ast, U^\ast_{g_{i_2}}, \dots, U^\ast_{g_{i_r}}\big) = 0$ whenever 
        $g_{i_1} g_{i_2} \cdots g_{i_r} \neq e$.
        \item 
        $\kappa_{2r}\big(U^\ast_{g_{i_1}} ,U^\ast_{g_{i_1}^{-1}}, U^\ast_{g_{i_2}} ,U^\ast_{g_{i_2}^{-1}}, \dots, U^\ast_{g_{i_r}} ,U^\ast_{g_{i_r}^{-1}}\big) = 0$  
        and  
        $\kappa_{2r}\big(U^\ast_{g_{i_r}^{-1}}, U^\ast_{g_{i_1}}, U^\ast_{g_{i_1}^{-1}}, U^\ast_{g_{i_2}},U^\ast_{g_{i_2}^{-1}}, \dots, U^\ast_{g_{i_r}}\big) = 0$  
        unless $g_{i_1} = g_{i_2} = \cdots = g_{i_r}$.
    \end{enumerate}
\end{lemma}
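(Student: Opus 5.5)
The proof rests on one observation: each $U_g^\ast$ is a $G$-circulant component. Let $J=\sum_{g\in G}U_g^\ast=id_{|G|}$ be the all-ones matrix, viewed as an element of $M_{|G|}(\mathbb C)\subset M_{|G|}(\mathcal A)$. Its left $G$-circulant decomposition is $J_g=U_g^\ast\odot J=U_g^\ast$ for every $g\in G$. I will take the state to be the normalized trace on $M_{|G|}(\mathbb C)$, and I will use that the supports of the $U_g^\ast$ are pairwise disjoint.

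\emph{Item (1).} This is Lemma \ref{l2} applied to $X=J$, so $X_{g_k}=U_{g_k}^\ast$. Lemma \ref{l2} needs no freeness assumption, so it applies to deterministic matrices. I will check that the index convention in the proof of Lemma \ref{l2}, namely $(X_g)_{u,v}\neq 0$ only if $g_v=g_ug$, matches the nonzero pattern of $U_g^\ast$. If the conventions differ (they agree when $G$ is abelian), I will rerun the proof of Lemma \ref{l2} directly for $U^\ast$. The trace of a word $U^\ast_{h_1}\cdots U^\ast_{h_s}=U^\ast_{h_s\cdots h_1}$ equals $1$ if the relevant ordered product is $e$ and $0$ otherwise. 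I will then reuse the combinatorial claim from that proof: if $g_{i_1}\cdots g_{i_r}\neq e$, every $\pi\in\mathcal{NC}(r)$ has a block whose ordered product is $\neq e$. This claim must be stated in the same orientation as the moment formula, which I will double-check. With it, every term of the M\"obius expansion $\kappa_r=\sum_\pi\mu(\pi,1_r)\varphi_\pi$ vanishes.

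\emph{Item (2).} For $r=1$ there is nothing to prove, so I will argue by strong induction on $r\geq 2$, using formula \eqref{products-as-arguments}. Group the $2r$ entries into consecutive pairs via $\sigma=\{\{1,2\},\{3,4\},\dots\}$. Each pair multiplies to $U^\ast_{g}U^\ast_{g^{-1}}=I$. Since $\kappa_r(I,\dots,I)=0$ for $r\geq2$,
\[
0=\sum_{\substack{\pi\in\mathcal{NC}(2r)\\ \pi\vee\sigma=1_{2r}}}\kappa_\pi\big[U^\ast_{g_{i_1}},U^\ast_{g_{i_1}^{-1}},\dots,U^\ast_{g_{i_r}},U^\ast_{g_{i_r}^{-1}}\big].
\]
The term $\pi=1_{2r}$ is the cumulant I want. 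I will show every other term vanishes unless all $g_{i_j}$ coincide. In a term with $\pi\neq 1_{2r}$, each block has fewer than $2r$ elements. By item (1), a block survives only if its ordered product is $e$. The aim is to show that a surviving block, being non-crossing and $\sigma$-connecting, is itself alternating of the form $(h,h^{-1},\dots)$ or its cyclic shift. Then the induction hypothesis forces all labels inside it to be equal, and connectivity with $\sigma$ propagates this equality across the blocks. Hence, if the $g_{i_j}$ are not all equal, every proper term is zero and $\kappa_{2r}=0$. The second, cyclically shifted cumulant follows either by the same argument with $\sigma$ shifted by one position, or by traciality, which makes free cumulants cyclically invariant.

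The main obstacle is the structural step in item (2): classifying which $\sigma$-connecting non-crossing partitions can have every block with product $e$. Blocks with product $e$ need not be alternating; for a non-abelian $G$, a block could for instance contain $h,k,(hk)^{-1}$. My first approach is to choose the outermost block containing position $1$ and peel off interval pieces using non-crossingness. Failing that, I would use the group-algebra picture: in $(\mathbb C[G],\tau_e)$ the $U_g$ are exactly the group elements. There I would compute the cumulants via the free-product and induction structure of finite groups, reducing to the cyclic subgroup generated by a single element, where \cite{arizmendi2021cyclic} already gives the result.
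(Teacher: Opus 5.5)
Item (1) cannot be proved as stated: it is false for non-abelian $G$, so the orientation check you defer is exactly where your argument breaks. Item (2) is true, but you leave its key step open.

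\textbf{Item (1).} As you note, $U_gU_h=U_{gh}$ gives $U^*_{h_1}\cdots U^*_{h_s}=U^*_{h_s\cdots h_1}$. Hence $\mathrm{tr}(U^*_{h_1}\cdots U^*_{h_s})=1$ if $h_s\cdots h_1=e$, and $0$ otherwise. Take non-commuting $a,b\in G$ and $c=(ba)^{-1}$. Then $a,b,c\neq e$, so every $\kappa_1$ vanishes and
\[
\kappa_3(U_a^*,U_b^*,U_c^*)=\mathrm{tr}(U^*_{cba})=1,
\]
although $abc=aba^{-1}b^{-1}\neq e$. For instance, $a=(12)$, $b=(23)$ in $S_3$.

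Your M\"obius argument does prove (1) under the hypothesis $g_{i_r}\cdots g_{i_1}\neq e$. The peeling claim holds in any fixed orientation: delete an interval block and induct. This agrees with the statement only for abelian $G$.

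The paper's one-line proof via Lemma~\ref{l2} has the same defect. The proof of that lemma uses the support condition $g_v=g_ug$, which is the pattern of $V_g^*$. For $U_g^*$ the condition is $g_v=gg_u$. Item (2) needs only the corrected (1), because adjacent pairs $g,g^{-1}$ cancel in either orientation.

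\textbf{Item (2).} The step you call the main obstacle is the whole content, and your fallback through $\mathbb C[G]$ and cyclic subgroups is not a valid reduction. The step can be closed with your all-pairs $\sigma$, as follows.
\begin{enumerate}
\item Write $\ell(2j-1)=\ell(2j)=g_{i_j}$ and assume all labels are $\neq e$; otherwise the cumulant is trivially $0$.
\item Take a surviving $\pi\neq 1_{2r}$. Then every block has product $e$. For consecutive elements $v<w$ of a block, the positions strictly between them form a union of blocks, so their product is $e$.
\item If $v$ is even and $w$ odd, that gap is a union of $\sigma$-pairs, so $\pi\vee\sigma=1_{2r}$ forces $w=v+1$.
\item If $v,w$ have the same parity, the gap product is $\ell(v)^{-1}$ or $\ell(w)$, which is $\neq e$; so this case cannot occur.
\item If $v$ is odd and $w$ even, the gap product is $e$ exactly when $\ell(v)=\ell(w)$.
\end{enumerate}
Hence every block reads $(U^*_{h_1},U^*_{h_1^{-1}},U^*_{h_2},U^*_{h_2^{-1}},\dots)$ or its cyclic shift. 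Odd length is excluded, since the block product would then be a single $h^{\pm1}$. Your strong induction and $\sigma$-propagation then finish the proof.

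\textbf{Comparison with the paper.} The paper avoids this classification by collapsing only $\{1,2\}$. The only competitors of $1_{2r+2}$ are then the two-block partitions $\{\{1,i+1,\dots,2r+2\},\{2,\dots,i\}\}$. Even there, for odd $i=2j+1$ with $g_{i_j}=g_k$, both blocks have product $e$. So item (1) does not kill that term, contrary to what the paper says. The term factors into a shorter cumulant of each of the two forms in (2). It vanishes unless all labels agree only by the induction hypothesis for both forms, which are related by traciality.
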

\begin{proof}
    \begin{enumerate}[leftmargin=*]
        \item The proof follows directly from Lemma~\ref{l2}, taking $X$ to be the matrix whose entries are all equal to~1.
        \item In order to simplify the notation, lets define $S_k:=U_{g_{i_k}}^\ast$. We proceed by induction on $r$.
    \medskip
    \textbf{Base case:} We show that
    \[
        \kappa_4(S_k, S_k^{-1}, S_l, S_l^{-1}) = 0
        \qquad\text{and}\qquad
        \kappa_4(S_l^{-1}, S_k, S_k^{-1}, S_l) = 0,
    \]
    unless $g_k = g_l$.  
    Assume $g_k \neq g_l$, $g_k \neq e$.  
    Using the product formula for arguments,  
    \[
        0 = \kappa_3(S_k S_k^{-1}, S_l, S_l^{-1})
        = \sum_{\substack{\pi \in \mathcal{NC}(4) \\ \pi \vee \sigma = 1_4}} 
           \kappa_\pi[S_k, S_k^{-1}, S_l, S_l^{-1}],
    \]
    where $\sigma = \{\{1,2\}, \{3\}, \{4\}\}$.  
    Expanding the sum,
    \begin{align*}
        0 ={}& 
        \kappa_4(S_k, S_k^{-1}, S_l, S_l^{-1})
        + \kappa_2\big(S_k \kappa_2(S_k^{-1}, S_l), S_l^{-1}\big)  \\
        &+ \kappa_3\big(S_k, \kappa_1(S_k^{-1}) S_l, S_l^{-1}\big)
        + \kappa_3\big(\kappa_1(S_k) S_k^{-1}, S_l, S_l^{-1}\big).
    \end{align*}
    By part (1), all terms except the first vanish, because in each one the group relation
    $g_k g_l^{-1} \neq e$, $g_k g_l g_l^{-1} \neq e$, and $g_k^{-1} g_l g_l^{-1} \neq e$
    appears.  
    Hence
    \[
        \kappa_4(S_k, S_k^{-1}, S_l, S_l^{-1}) = 0.
    \]
    Similarly,
    \[
        0 = \kappa_3(S_l^{-1}, S_k S_k^{-1}, S_l)
        = \sum_{\substack{\pi \in \mathcal{NC}(4) \\ \pi \vee \sigma = 1_4}}
        \kappa_\pi[S_l^{-1}, S_k, S_k^{-1}, S_l],
    \]
    where now $\sigma = \{\{1\}, \{2,3\}, \{4\}\}$.  
    Expanding,
    \begin{align*}
        0 ={}&
        \kappa_4(S_l^{-1}, S_k, S_k^{-1}, S_l)
        + \kappa_2\big(S_l^{-1}, S_k \kappa_2(S_k^{-1}, S_l)\big) \\
        &+ \kappa_3\big(S_l^{-1}, S_k, \kappa_1(S_k^{-1}) S_l\big)
        + \kappa_3\big(S_l^{-1} \kappa_1(S_k), S_k^{-1}, S_l\big).
    \end{align*}
    Again by part (1), all terms except the first are zero, and thus
    \[
        \kappa_4(S_l^{-1}, S_k, S_k^{-1}, S_l) = 0.
    \]
    \medskip
    Now, assume (2) holds for all $1 \le t \le r$.  
    We show it also holds for $r+1$.  
    That is,
    \[
        \kappa_{2r+2}(S_k, S_k^{-1}, S_{i_1}, S_{i_1}^{-1}, \dots, S_{i_r}, S_{i_r}^{-1}) = 0,
    \]
    and
    \[
        \kappa_{2r+2}(S_{i_r}^{-1}, S_k, S_k^{-1}, S_{i_1}, S_{i_1}^{-1}, \dots, S_{i_r}) = 0.
    \]
    Using the product formula,
    \[
        0 = \kappa_{2r+1}(S_k S_k^{-1}, S_{i_1}, S_{i_1}^{-1}, \dots, S_{i_r}, S_{i_r}^{-1})
        = \sum_{\substack{\pi \in \mathcal{NC}(2r+2) \\ \pi \vee \sigma = 1_{2r+2}}}
          \kappa_\pi[S_k, S_k^{-1}, S_{i_1}, S_{i_1}^{-1}, \dots, S_{i_r}, S_{i_r}^{-1}],
    \]
    where 
    \(
        \sigma = \big\{\{1,2\}, \{3\}, \dots, \{2r+2\}\big\}.
    \)
    The partitions $\pi_i$ with $\pi_i \vee \sigma = 1_{2r+2}$ are exactly:
    \begin{itemize}
        \item $\pi_i = 1_{2r+2}$,
        \item partitions with two blocks, one containing $1$ and another containing $2$, i.e.
        \[
            \pi_i = \big\{\{1, i+1, \dots, 2r+2\}, \{2,3,\dots,i\}\big\},
            \qquad i=2,\dots,2r+2.
            \]
        \end{itemize}
    Consider each case:
    \begin{itemize}
        \item If $i$ is even, $i=2j$, then
        \begin{align*}
            \kappa_{\pi_{2j}}\big[&S_k,S_k^{-1},...,S_{i_r},S_{i_r}^{-1}\big]\\
            =&
            \kappa_{2r-2j+1}\big(
                S_k\, \kappa_{2j-1}(S_k^{-1}, S_{i_1}, S_{i_1}^{-1},\dots,S_{i_j},S_{i_j}^{-1}),
                S_{i_{j+1}}, S_{i_{j+1}}^{-1},\dots,S_{i_r},S_{i_r}^{-1}
            \big).
        \end{align*}
        Since $g_k \neq e$, we have  
        $g_k g_{i_1} g_{i_1}^{-1} \cdots g_{i_j} g_{i_j}^{-1} \neq e$,  
        and by part (1),
        \[
            \kappa_{2j-1}(S_k^{-1}, S_{i_1}, S_{i_1}^{-1},\dots,S_{i_j}, S_{i_j}^{-1}) = 0.
        \]
        \item If $i$ is odd, $i=2j+1$, then
        \begin{align*}
            \kappa_{\pi_{2j+1}}\big[&S_k,S_k^{-1},...,S_{i_r},S_{i_r}^{-1}\big]
            \\ 
            =&
            \kappa_{2r-2j}\big(
                S_k\, \kappa_{2j}(S_k^{-1}, S_{i_1}, S_{i_1}^{-1},\dots,S_{i_j},S_{i_j}^{-1}, S_{i_{j+1}}),
                S_{i_{j+1}}^{-1}, \dots, S_{i_r}, S_{i_r}^{-1}
            \big).
        \end{align*}
        Again, since $g_k \neq e$,  
        $g_k g_{i_1} g_{i_1}^{-1} \cdots g_{i_j} g_{i_j}^{-1} \neq e$,  
        and by part (1),
        \[
            \kappa_{2j}(S_k^{-1},S_{i_1},S_{i_1}^{-1},\dots,S_{i_j},S_{i_j}^{-1},S_{i_{j+1}})=0.
        \]
    \end{itemize}
    If $g_k \neq g_{i_1} \neq \cdots \neq g_{i_r}$, then for each partition $\pi_i$,  
    at least one block $V$ satisfies
    \[
        g_{i_{b_1}} g_{i_{b_2}} \cdots g_{i_{b_s}} \neq e,
    \]
    so by part (1),
    \[
        \kappa_{\pi_i}\big[S_k,S_k^{-1},S_{i_1},S_{i_1}^{-1},...,S_{i_r},S_{i_r}^{-1}\big]= 0.
    \]
    Hence
    \[
        \kappa_{2r+2}(S_k, S_k^{-1}, S_{i_1}, S_{i_1}^{-1}, \dots, S_{i_r}, S_{i_r}^{-1}) = 0.
    \]
    The case
    \(
        \kappa_{2r+2}(S_{i_r}^{-1}, S_k, S_k^{-1}, S_{i_1}, S_{i_1}^{-1}, \dots, S_{i_r}) = 0
    \)
    is completely analogous.
    \end{enumerate}
\end{proof}

\subsection{Proof of Main Theorem}
Finally, we prove the main result. The following lemma is the key ingredient in its proof. It establishes the freeness between the matrices associated with the left regular representation of $G$ and the components of the decomposition of $X$, under the assumption that $X$ is free from $M_{|G|}(\mathbb{C})$.


\begin{lemma}\label{lema ind X y U}
    Suppose that $X \in M_{|G|}(\mathcal{A})$ is free from $M_{|G|}(\mathbb{C})$. 
    Then the family $\left\{U_{e}^*,U_{g_1}^*,...,U_{g_{|G|-1}}^*\right\}$ is free from the family 
    $\left\{X_e, X_{g_{1}}, \dots, X_{g_{{|G|-1}}}\right\}$.
\end{lemma}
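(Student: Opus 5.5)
We plan to prove the statement by showing that the unital algebra generated by the $U_g^*$ and the unital algebra generated by the components $X_g$ sit inside two algebras that are already known to be free. The first is $M_{|G|}(\mathbb C)$. The second is a suitable algebra built from $X$, which we then need to identify.

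The starting point is Lemma~\ref{lemma Y-UXU}(1):
\[
X_{g}=\sum_{h\in G}E_{h,h}\,X\,E_{gh,gh}.
\]
By this formula every $X_g$ is a polynomial in $X$ and the diagonal matrix units, so $X_g$ lies in the algebra generated by $X$ and $\mathcal D(\mathbb C)$. The $U_g^*$, however, are not diagonal, so a plain "free from $M_{|G|}(\mathbb C)$" argument does not separate them from $X_g$. We will therefore exploit the group structure more finely.

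\textbf{Step 1 (moment criterion).} Take an alternating product
\[
P=a_1b_1a_2b_2\cdots a_nb_n,
\]
where each $a_j$ is a centered polynomial in the $U_g^*$ and each $b_j$ is a centered polynomial in the $X_g$. We must show $\varphi(P)=0$.

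By linearity, each $a_j$ may be taken to be a linear combination of $U_g^*$. Since the $U_g$ form a representation and $\varphi(U_g^*)=\delta_{g,e}$ (normalized trace), centering removes exactly the $U_e^*$ part. So each $a_j$ is a combination of $U_g^*$ with $g\neq e$.

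Likewise, each $b_j$ is a sum of monomials $X_{h_1}\cdots X_{h_s}$. By the index computation in the proof of Lemma~\ref{l2}, such a monomial is supported on the "$h_1\cdots h_s$-pattern". Hence every polynomial in the $X_g$ decomposes as
\[
b=\sum_{h\in G}b^{(h)},\qquad b^{(h)}\in U_h^*\mathcal D(\mathcal A),
\]
that is, each $b^{(h)}$ is $U_h^*$ times a diagonal matrix with entries in $\mathcal A$ (this form follows from Lemma~\ref{lemmaX_k}).

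\textbf{Step 2 (reduction to diagonal pieces).} Write each $b_j$ as $\sum_h U_h^* D_{j,h}$ with $D_{j,h}\in\mathcal D(\mathcal A)$, and similarly for the $a_j$. Using that $U_g^*$ conjugates $\mathcal D(\mathbb C)$ onto itself, we move all the $U$'s to the left. This expresses $\varphi(P)$ as a sum of terms
\[
\varphi\big(U_{g}^*\,D_1'D_2'\cdots\big),
\]
where each $D'$ is a $U$-conjugate of a diagonal piece. Only the terms whose total group word equals $e$ survive.

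\textbf{Step 3 (use freeness of $X$ from matrix units).} The entries of the diagonal pieces are $E_{h,h}XE_{gh,gh}$-type compressions. Freeness of $X$ from $M_{|G|}(\mathbb C)$ makes the compressions $e_{1i}Xe_{j1}$ behave like free circular-type families over the compressed algebra. Concretely, we will use the cumulant criterion of Lemma~\ref{lemma-ind-libre-conjuntos}: it suffices to show that every mixed cumulant $\kappa_n$ whose arguments include at least one $U_{g}^*$ and at least one $X_{h}$ vanishes.

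To compute such a cumulant, we substitute $X_h=\sum_k E_{k,k}XE_{hk,hk}$ and expand it multilinearly. We then apply the products-as-arguments formula together with Theorem~\ref{thm: as ind bs}, with the $a$'s being copies of $X$ and the $b$'s being products of matrix units and $U^*$'s. Each term becomes
\[
\kappa_\pi[X,\dots,X]\;\varphi_{Kr(\pi)}[\text{deterministic matrices}].
\]

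\textbf{Step 4 (main obstacle).} The hard part is showing that these terms cancel after summing over $k$ and over the index placements. The key observation is that the right-hand indices $hk$ of the matrix units are tied to the left-hand ones by left multiplication, while $U_g^*$ shifts indices by left multiplication too. Using $U_gV_h=V_hU_g$, the deterministic factor $\varphi_{Kr(\pi)}$ is invariant under the relabeling $k\mapsto kc$ by right translation. Summing over this relabeling forces every block of $Kr(\pi)$ that contains a $U_g^*$ to carry a total group word that must equal $e$. I expect this to reduce, via Lemma~\ref{lemma-U-circ}(1), to the vanishing of mixed cumulants.

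Making this cancellation rigorous is the step I expect to fail or need the most care. As a fallback, I would argue at the level of moments directly: show that the distribution of $\{U_g^*\}\cup\{X_g\}$ is invariant under conjugation by the right-regular unitaries $V_c$, which commute with the $U_g$. Combined with the fact that $\{X_g\}$ is free from $\mathcal D(\mathbb C)$ amalgamated appropriately, this yields the alternating-moment vanishing.
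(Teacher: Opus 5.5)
Your proposal stops exactly where the proof has to happen, and the mechanism you suggest for Step 4 cannot supply the missing vanishing. The deterministic factor is invariant under the relabeling $k\mapsto kc$ (conjugation by $V_c$, which commutes with the $U_g^*$). That invariance only shows the summands indexed by $k$ are all equal, so summing over relabelings multiplies by $|G|$ rather than producing any cancellation. The condition that the group word equals $e$ is just the support argument of Lemma \ref{l2}, and it does not make the surviving terms vanish.

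The real difficulty is already visible in your Step 2. A $\varphi$-centered $b_j$ only satisfies $\sum_k\tau\big((b_j^{(e)})_{k,k}\big)=0$, and trace-centering of diagonal parts is not enough, because $\{U_g^*\}$ is not free from $\mathcal D(\mathbb C)$. For instance, take $G=\mathbb Z_2$, $S=U_{g_1}^*$ and $D=\mathrm{diag}(1,-1)$: then $\varphi(S)=\varphi(D)=0$ but $\varphi(SDSD)=-1$. So you must show that elements of the algebra generated by the $X_g$ are centered entrywise on the diagonal, and nothing in Steps 1--4 does this. The paper stays with the cumulant route of your Step 3 and closes it differently. Freeness of $X$ from $M_{|G|}(\mathbb C)$ kills blocks mixing $X$ with deterministic entries. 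The relations $E_aE_b=\delta_{a,b}E_a$ then tie the indices of each deterministic block to a single parameter $h$, and summing over $h$ is used to produce an identity argument, so the cumulant of order $\ge 2$ vanishes.

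Your fallback is the right idea, but its key ingredient is misstated: $\{X_g\}$ is not free from $\mathcal D(\mathbb C)$. For example, $\varphi(X_eE_{1,1}X_eE_{2,2})=0$, whereas freeness would give $\kappa_2(X_e)|G|^{-2}$, which is nonzero in general. To turn the fallback into a proof, set $E_{\mathcal D}(A):=\sum_k\tau(A_{k,k})E_{k,k}$ and use three facts.

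(i) Because $X$ is free from $M_{|G|}(\mathbb C)\supset\mathcal D(\mathbb C)$, the algebra generated by $X$ and $\mathcal D(\mathbb C)$ is free from $M_{|G|}(\mathbb C)$ with amalgamation over $\mathcal D(\mathbb C)$ with respect to $E_{\mathcal D}$. This is the $R$-cyclicity theorem of Nica--Shlyakhtenko--Speicher.

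(ii) $E_{\mathcal D}(b)=\varphi(b)1$ for every $b$ in the unital algebra generated by the $X_g$. This is where your $V_c$-invariance belongs. $\mathrm{Ad}\,V_c$ preserves $\varphi$, fixes the $U_g^*$, sends $X_g$ to $(V_cXV_c^*)_g$ and sends $E_{k,k}$ to $E_{kc,kc}$. The pair $(V_cXV_c^*,M_{|G|}(\mathbb C))$ has the same joint distribution as $(X,M_{|G|}(\mathbb C))$. Since right translations act transitively on $G$, $\varphi(E_{k,k}b)$ does not depend on $k$.

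(iii) $E_{\mathcal D}(U_g^*)=\delta_{g,e}1$.

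With these, $\varphi$-centered elements on both sides are $E_{\mathcal D}$-centered, so amalgamated freeness gives $E_{\mathcal D}(a_1b_1\cdots a_nb_n)=0$. Applying $\varphi=\varphi\circ E_{\mathcal D}$ then yields scalar freeness. With (i)--(iii) made explicit this is a complete argument, shorter than the cumulant bookkeeping; without them the proposal has a gap at its central step.
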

\begin{proof} Using Lemma \ref{lemma-ind-libre-conjuntos}, we prove the freeness between the families
\[
\mathcal{A}_1=\{U_{e}^*,U_{g_1}^*,\dots,U_{g_{|G|-1}}^*\},
\qquad
\mathcal{A}_2=\{X_0,X_1,\dots,X_{|G|-1}\}.
\]
That is, we must show that every mixed free cumulant involving elements from both families vanishes. Let $n\ge 1$ and consider a mixed cumulant
\[
k_n(a_1,\dots,a_n), \qquad a_j\in \mathcal{A}_1\cup\mathcal{A}_2,
\]
with at least one $a_j\in\mathcal{A}_1$ and at least one $a_j\in\mathcal{A}_2$.
Write the positions where elements of $\mathcal{A}_2$ occur as
\[
J:=\{j_1<j_2<\cdots<j_r\}\subset\{1,\dots,n\},\qquad a_{j_t}=X_{g_{j_t}},\ \ r\ge 1.
\]
Using multilinearity and the decomposition $X_g=\sum_{h\in G}E_h X E_{gh}$, we obtain
\[
k_n(a_1,\dots,a_n)
=
\sum_{h_1,\dots,h_r\in G}
k_n\!\Big(a_1,\dots,a_{j_1-1},\,E_{h_1}XE_{g_{j_1}h_1},\,a_{j_1+1},\dots,
E_{h_r}XE_{g_{j_r}h_r},\dots,a_n\Big).
\]
Now apply the product as arguments formula to the previous equation.
This yields a sum over $\pi\in \mathcal{NC}(n+2r)$ satisfying $\pi\vee\sigma=1_{n+2r}$,
where $\sigma$ is the partition whose only non-singleton blocks are the $r$ triples
\[
\sigma=\big(\text{singletons}\big)\ \cup\ \big\{\{j_t',j_t'+1,j_t'+2\}: t=1,\dots,r\big\},
\]
with $j_t'$ denoting the position of the first factor $E_{h_t}$ after expansion. Hence
\[
k_n(a_1,\dots,a_n)
=
\sum_{h_1,\dots,h_r\in G}
\sum_{\substack{\pi\in \mathcal{NC}(n+2r)\\ \pi\vee\sigma=1_{n+2r}}}
k_\pi\big[\ \cdots,\ E_{h_1},X,E_{g_{j_1}h_1},\ \cdots,\ E_{h_r},X,E_{g_{j_r}h_r},\ \cdots\ \big].
\]
Since $X$ is free from $M_{|G|}(\mathbb{C})$, any mixed free cumulant involving both $X$
and an element of $M_{|G|}(\mathbb{C})$ vanishes. In particular, for a fixed
$\pi\in \mathcal{NC}(n+2r)$, the quantity
\[
k_\pi\big[\ \cdots,\ E_{h_1},X,E_{g_{j_1}h_1},\ \cdots,\ E_{h_r},X,E_{g_{j_r}h_r},\ \cdots\big],
\]
is equal to $0$ whenever $\pi$ has a block containing an occurrence of $X$ and also containing at least one
deterministic element (i.e.\ any $E_{h_t}$, $E_{g_{j_t}h_t}$, or any $U_g^*$ appearing among the other entries).
Consequently, the only partitions $\pi$ that can contribute are those for which every block intersecting the set of
$X$-positions is entirely contained in the set of $X$-positions.
\\
For such a $\pi$, write $\pi=\pi_X\sqcup \pi_M$, where $\pi_X$ is the restriction of $\pi$ to the positions
\[
\{j_1'+1,\dots,j_r'+1\}
\qquad(\text{the positions occupied by the }X\text{'s}),
\]
and $\pi_M$ is the restriction to the remaining positions
\[
M:=\{1,\dots,n+2r\}\setminus\{j_1'+1,\dots,j_r'+1\}.
\]
Then, by multiplicativity of cumulants over blocks,
\begin{align*}
k_n(a_1,&\dots,a_n)
=
\sum_{h_1,\dots,h_r\in G}
\sum_{\substack{\pi_X\in \mathcal{NC}(r)\\ \pi_M\in \mathcal{NC}(M)\\ (\pi_X\sqcup \pi_M)\vee\sigma=1_{n+2r}}}
k_{\pi_X}[X,\dots,X]\;
k_{\pi_M}\big[\ \cdots,\ E_{h_1},E_{g_{j_1}h_1},\ \cdots,\ E_{h_r},E_{g_{j_r}h_r},\ \cdots\big]
\\
&=
\sum_{\pi_X\in \mathcal{NC}(r)}k_{\pi_X}[X,\dots,X]\;
\sum_{h_1,\dots,h_r\in G}
\sum_{\substack{\pi_M\in \mathcal{NC}(M)\\ (\pi_X\sqcup \pi_M)\vee\sigma=1_{n+2r}}}
k_{\pi_M}\big[\ \cdots,\ E_{h_1},E_{g_{j_1}h_1},\ \cdots,\ E_{h_r},E_{g_{j_r}h_r},\ \cdots\big].
\end{align*}
Let $\widehat{\pi}_M$ be the largest non-crossing partition of $M$ such that $\pi_X\sqcup \widehat{\pi}_M\in \mathcal{NC}(n+2r)$. 
Then any $\pi_M$ contributing to the sum satisfies $\pi_M\le \widehat{\pi}_M$. Write $\widehat{\pi}_M=\{B^{(1)},\dots,B^{(\ell)}\}$.  
For each block $B^{(i)}$, there exist indices
\(
1\leq i(1)<\cdots<i(s_i)\leq r,
\)
such that $B^{(i)}$ has the explicit form
\begin{equation*}
B^{(i)}
=
\Big\{
j_{i(1)}'+2,\dots,j_{i(2)}',
\;
j_{i(2)}'+2,\dots,j_{i(3)}',
\;
\dots,
\;
j_{i(s_i)}'+2,\dots,j_{i(1)}'
\Big\}.
\end{equation*}
In particular:
\begin{itemize}
\item The entries at positions $j_{i(k)}'$ are of the form $E_{h_{i(k)}}$.
\item The entries at positions $j_{i(k)}'+2$ are of the form $E_{g_{j_{i(k)}}h_{i(k)}}$.
\item The remaining entries inside the block correspond to elements of the form $U_g^*$.
\end{itemize}
Moreover, the elements at positions $j_{i(k)}'+1$ and $j_{i(k+1)}'+1$ correspond to $X$'s that lie in the same block of $\pi_X$ (cyclically, with $i(s_i+1)=i(1)$). By the multilinearity of each block
\begin{align*}
\sum_{\substack{\pi_M\in \mathcal{NC}(M)\\(\pi_X\sqcup \pi_M)\vee\sigma=1_{n+2r}}}
&k_{\pi_M}[\cdots]
=
\prod_{i=1}^{\ell}
\left(
\sum_{\substack{\rho\in \mathcal{NC}(B^{(i)})\\ (\pi_X\sqcup\rho)\vee\sigma|_{B^{(i)}}=1}}
k_{\rho}\big[\text{Entries restricted to }B^{(u)}\big]
\right)\\ 
=&\prod_{i=1}^{\ell}
\left(
\sum_{\substack{\rho\in \mathcal{NC}(B^{(i)})\\ (\pi_X\sqcup\rho)\vee\sigma|_{B^{(i)}}=1}}
k_{\rho}\Big[E_{g_{j_{i(1)}}h_{i(1)}},...,E_{h_{i(2)}},E_{g_{j_{i(3)}}h_{i(3)}},...,E_{h_{i(s-2)}},E_{g_{j_{i(s-1)}}h_{i(s-1)}}...,E_{h_{i(s)}}\Big]\right).
\end{align*}
Define $\sigma_{B^{(i)}}\in \mathcal{NC}(B^{(i)})$ as the partition whose only non-singleton blocks are
\[
\{j_{i(1)}',j_{i(2)}'+2\},
\;
\{j_{i(2)}',j_{i(3)}'+2\},
\;
\dots,
\;
\{j_{i(s_i)}',j_{i(1)}'+2\},
\]
and all remaining points are singletons. Then the condition 
\(
(\pi_X\sqcup\rho)\vee\sigma|_{B^{(i)}}=1
\) is equivalent to the condition
\(
\rho\vee\sigma_{B^{(i)}}=1_{B^{(i)}}
\). Then 
\begin{align*}
    \sum_{\substack{\pi_M\in \mathcal{NC}(M)\\(\pi_X\sqcup \pi_M)\vee\sigma=1_{n+2r}}}
&k_{\pi_M}[\cdots]
\\ 
=&\prod_{i=1}^{\ell}
\left(
\sum_{\substack{\rho\in \mathcal{NC}(B^{(i)})\\ \rho\vee\sigma_{B^{(i)}}=1_{B^{(i)}}}}
k_{\rho}\Big[E_{g_{j_{i(1)}}h_{i(1)}},...,E_{h_{i(2)}},E_{g_{j_{i(3)}}h_{i(3)}},...,E_{h_{i(s-2)}},E_{g_{j_{i(s-1)}}h_{i(s-1)}}...,E_{h_{i(s)}}\Big]\right).
\end{align*}
Applying the product as arguments formula (collapsing the pairs prescribed by $\sigma_B$), the block contribution
can be rewritten in terms of a cumulant whose entries include products of the form
\[
E_{h_{i(k)}}\,E_{g_{j_{i(k+1)}}h_{i(k+1)}},\qquad k=1,\dots,s
\quad(\text{with }i(s+1)=i(1)).
\]
Using
\[
E_aE_b=\delta_{a,b}E_a,\qquad a,b\in G,
\]
we obtain
\[
E_{h_{i(k)}}E_{g_{j_{i(k+1)}}h_{i(k+1)}}
=
\delta_{\,h_{i(k)},\,g_{j_{i(k+1)}}h_{i(k+1)}}\;E_{h_{i(k)}}.
\]
If any of these Kronecker deltas is zero, then the corresponding cumulant term vanishes and does not contribute.
Hence it suffices to consider the case when all deltas are equal to $1$. In that case, the constraints force
\[
h_{i(1)}=g_{j_{i(2)}}h_{i(2)},\quad
h_{i(2)}=g_{j_{i(3)}}h_{i(3)},\ \dots,\ 
h_{i(s)}=g_{j_{i(1)}}h_{i(1)}.
\]
Substituting recursively, we obtain
\[
h_{i(1)}
=
g_{j_{i(2)}}g_{j_{i(3)}}\cdots g_{j_{i(s)}}g_{j_{i(1)}}\,h_{i(1)}.
\]
In particular, this implies the cyclic constraint
\[
g_{j_{i(2)}}g_{j_{i(3)}}\cdots g_{j_{i(s)}}g_{j_{i(1)}}=e,
\]
and moreover all $h_{i(k)}$ are determined by a single free parameter $h:=h_{i(1)}$, i.e.\
$h_{i(k)}=\alpha_k h$ for suitable $\alpha_k\in G$.
\\
Now we use the key observation: after imposing the delta constraints, exactly one free summation parameter $h\in G$ remains in the block. By multilinearity of free cumulants, we may sum over one $E$-entry
\[
\sum_{h\in G}E_{\alpha h}=\sum_{t\in G}E_t=I.
\]
Since the block is non-trivial (it connects at least two positions), the resulting cumulant has order at least $2$, and therefore
\[
k_m(\dots,I,\dots)=0, \qquad \text{for all } m\ge2.
\]
Thus each block factor vanishes after summing over the $h$'s, and consequently
\(
k_n(a_1,\dots,a_n)=0,
\)
for every mixed cumulant. This proves that $\Big\{U_{e}^*,U_{g_1}^*,...,U_{g_{|G|-1}}^*\Big\}$ is free from $\Big\{X_0, X_1, \dots, X_{|G|-1}\Big\}$.
\end{proof}
\begin{thm}\label{main-thm}
     Let $G=\{e,g_1,...,g_{|G|-1}\}$ be a finite group. Assume that $X\in M_{|G|}(\mathcal{A})$ is free from $M_{|G|}(\mathbb{C})$ over  $\mathbb{C}$. Let 
    $$X^t=Y_e+Y_{g_1}+\cdots+Y_{g_{{|G|-1}}},$$ be the left $G$-circulant decomposition from $X^t$. The 
    \begin{equation*}
        Y_{e},\{Y_{g},Y_{g^{-1}}\},\text{ for all }g\in G\backslash \{e\},
    \end{equation*}
    is a free family over $\mathbb{C}$. 
    Moreover, $\{Y_g,Y_{g^{-1}}\}$ is $R$-diagonal pair for all $g\in G\backslash\{e\}$.  
\end{thm}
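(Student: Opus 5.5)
We take as starting point Lemma \ref{lemma Y-UXU}(2), which gives $Y_{g} = U_{g}^*\,X_{g^{-1}}\,U_{g}^* = U_g^*\widetilde X_g U_g^*$ for every $g\in G$. Since $U_g^*$ is unitary with inverse $U_g=U_{g^{-1}}^*$, every mixed cumulant of the $Y$'s is a cumulant of words in which blocks of the form $U^*_{g}\widetilde X_{g}U^*_{g}$ appear. By Lemma \ref{lema ind X y U}, the families $\{U^*_g\}_{g\in G}$ and $\{X_g\}_{g\in G}$ are free. The computation therefore fits the setting of Theorem \ref{thm: as ind bs} (moments of alternating products of free families), or equivalently of the product-as-arguments formula \eqref{products-as-arguments}.

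\textbf{Step 1.} Fix indices and write
\[
\kappa_n(Y_{g_{i_1}},\dots,Y_{g_{i_n}})=\kappa_n\big(U^*_{g_{i_1}}\widetilde X_{g_{i_1}}U^*_{g_{i_1}},\dots,U^*_{g_{i_n}}\widetilde X_{g_{i_n}}U^*_{g_{i_n}}\big).
\]
Expand this with \eqref{products-as-arguments} as a sum over $\pi\in\mathcal{NC}(3n)$ with $\pi\vee\sigma=1_{3n}$, where $\sigma$ groups consecutive triples. By freeness of $\{U^*\}$ and $\{X\}$ (Theorem \ref{thm-freenes} via Lemma \ref{lemma-ind-libre-conjuntos}), only $\pi=\pi_U\sqcup\pi_X$ contribute. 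Here $\pi_X$ is a non-crossing partition of the $n$ middle positions, and $\pi_U$ lives on the $2n$ outer positions and lies below the Kreweras-type complement of $\pi_X$.

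Corollary \ref{cor_X} forces every block of $\pi_X$ to satisfy a product condition, namely that the product of the corresponding $g$'s, in the appropriate order, equals $e$. Lemma \ref{lemma-U-circ}(1) forces the same for each block of $\pi_U$. Note that a $U$-block joins the right $U^*_{g_{i_a}}$ of one triple with the left $U^*_{g_{i_b}}$ of the next one visible to it.

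\textbf{Step 2 (freeness and $R$-diagonality).} We must show that the surviving terms vanish unless every index lies in a single set $\{g,g^{-1}\}$, and that they alternate $g,g^{-1}$. The cleanest route is probably to mimic the cyclic proof of \cite{arizmendi2021cyclic}. The condition $\pi\vee\sigma=1$ together with non-crossingness forces every $U$-block adjacent to an $X$-block to glue neighbouring triples. Chasing the group constraints around the boundary of each $X$-block then gives pairwise relations. The relation $g_{i_a}g_{i_b}=e$ for consecutive entries should be extracted from the two-element $U$-blocks adjacent to a pair inside an $X$-block. Larger $U$-blocks are handled using Lemma \ref{lemma-U-circ}(2): cumulants of alternating strings $U_g^*,U_{g^{-1}}^*,U_h^*,U_{h^{-1}}^*,\dots$ vanish unless all the $g$'s agree.

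Combining the two constraint systems over the connected partition $\pi$ should yield two conclusions. First, all $g_{i_j}\in\{g,g^{-1}\}$ for a single $g$, which gives freeness of $Y_e$ and of the pairs $\{Y_g,Y_{g^{-1}}\}$ by Lemma \ref{lemma-ind-libre-conjuntos}. Second, consecutive indices are mutually inverse, so that $n$ is even and the entries alternate $Y_g,Y_{g^{-1}}$. Since $X$ is self-adjoint, $Y_{g^{-1}}=Y_g^*$, and this alternation is exactly the $R$-diagonal condition. For involutions $g=g^{-1}$ the second conclusion is vacuous, and we only get freeness. For $g=e$ the same computation gives freeness only.

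\textbf{Main obstacle.} The hard part is the combinatorial bookkeeping in Step 2. Translating ``$\pi\vee\sigma=1$ and every block has group product $e$'' into pairwise inverse relations between cyclically adjacent arguments requires care in the non-abelian case, because the order of products matters. Corollary \ref{cor_X} reverses the order while Lemma \ref{lemma-U-circ} does not. To manage this, we would first treat a single $X$-block with its surrounding $U$-blocks, and then argue by induction on the number of blocks of $\pi_X$, peeling off an interval block, as is standard for non-crossing partitions.
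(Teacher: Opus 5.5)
Your Step 1 follows the paper's route exactly. You factor $Y_g=U_g^*\widetilde X_gU_g^*$ via Lemma~\ref{lemma Y-UXU}, use freeness of $\{U_g^*\}$ and $\{X_g\}$ from Lemma~\ref{lema ind X y U}, expand by products as arguments over $\pi=\pi_U\sqcup\pi_X$, and take the block constraints from Corollary~\ref{cor_X} and Lemma~\ref{lemma-U-circ}. However, Step 2 carries the whole content of the theorem, and you only announce it. The mechanism you propose for it is not correct as stated.

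\textbf{Gluing is not combinatorial.} Your ``note'' says a $U$-block joins the right $U^*$ of one triple to the left $U^*$ of the next. You also claim that non-crossingness together with $\pi\vee\sigma=1$ forces this. Both are false. Take $r=6$, $\pi_X=\{\{2,5\},\{8,11\},\{14,17\}\}$ and $\pi_U=\{\{3,4\},\{9,10\},\{15,16\},\{1,6,7\},\{12,13,18\}\}$. This partition is non-crossing, has no singletons, and satisfies $\pi\vee\sigma=1_{18}$. Yet positions $18$ and $1$ lie in different blocks, and $\{1,6,7\}$ joins a left $U^*$ to a right $U^*$. The term dies only because $\{2,5\}$ forces $h_2=h_1^{-1}$, so the block $\{1,6,7\}$ has group product $h_3\neq e$. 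Proving $3i\sim_\pi 3(i+1)-2$ in general needs exactly this interplay: the vanishing $\kappa_1(U_h^*)=\kappa_1(\widetilde X_h)=0$ for $h\neq e$, combined with products taken along the nesting. That is what the paper does in Lemmas~\ref{lema-1-dem}, \ref{lema-2-dem} and the lemma on $B^{(i,j)}$.

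\textbf{Product constraints do not force a single class.} Even with gluing in hand, take $(h_1,h_2,h_3,h_4)=(a,b,b^{-1},a^{-1})$ with $b\notin\{a,a^{-1}\}$. The term $\pi_X=\{\{2,11\},\{5,8\}\}$, $\pi_U=\{\{1,12\},\{3,4,9,10\},\{6,7\}\}$ satisfies every block's product condition, is connected, and mixes two classes. It vanishes only because $\kappa_4(U_a^*,U_b^*,U_{b^{-1}}^*,U_{a^{-1}}^*)=0$. So you must show that every contributing $U$-block has precisely the alternating shape of Lemma~\ref{lemma-U-circ}(2). That requires knowing that consecutive elements of every $X$-block carry labels $h,h^{-1}$ (Lemma~\ref{lema-2-dem}), and your sketch gives no argument for it.

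\textbf{Involutions are not vacuous.} For $g=g^{-1}$, $R$-diagonality of $\{Y_g,Y_g\}$ means $\kappa_{2m+1}(Y_g,\dots,Y_g)=0$, which Proposition~\ref{prop: Y_g} uses. ``Neighbours are mutually inverse'' cannot give this. You need a parity argument, e.g.\ an $X$-block of $s$ copies of $\widetilde X_g$ requires $g^s=e$, so $s$ is even.

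\textbf{The identity component needs separate treatment.} Your triple expansion does not handle mixed cumulants involving $Y_e$. There $U_e^*=1$ and $\kappa_1(X_e)=\varphi(X)$ need not vanish, so the no-singleton reasoning is unavailable. The paper handles this separately as Case~(2).

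\textbf{Self-adjointness is not assumed.} $X$ is not assumed self-adjoint. What is needed is the vanishing of non-alternating cumulants in $(Y_g,Y_{g^{-1}})$, which does not use $Y_{g^{-1}}=Y_g^*$.
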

\begin{proof}
    By Lemma \ref{lemma Y-UXU}, we obtain
\begin{equation*}
    Y_{g_k}
    =
    U_{g_k}^*
    \left[
        \sum_{h \in G}
        E_{h,h}\, X\, E_{g_k^{-1}h,\,g_k^{-1}h}
    \right]
    U_{g_k}^*
    =
    U_{g_k}^*\widetilde{X}_{g_k}U_{g_k}^*,
    \qquad
    \forall\, g_k \in \{e,g_1,\ldots,g_{|G|-1}\}.
\end{equation*}
Let $r>0$ and let $h_{1},\ldots,h_{r}\in\{e,g_1,\ldots,g_{|G|-1}\}$. Assume that the indices
$h_1,\ldots,h_r$ do not all belong to the same class of the partition
\[
\{e\},\qquad \{g,g^{-1}\},\quad g\in G\setminus\{e\}.
\]
We shall prove that
\begin{equation*}
    \kappa_r(Y_{h_{1}},Y_{h_{2}},\ldots,Y_{h_{r}})=0.
\end{equation*}
We distinguish the following two cases.
\begin{enumerate}
\item We have $h_j\neq e$ for every $j=1,\ldots,r$. In this case,
\(
Y_{h_j}=U_{h_j}^*\widetilde{X}_{h_j}U_{h_j}^*,
\)
and each term $Y_{h_j}$ contributes three factors.
\item  There exists some $1\leq j\leq r$ such that $h_j=e$. Then
\(
Y_{h_j}
=
U_{e}^*\widetilde{X}_{e}U_{e}^*
=
\widetilde{X}_{e}
=
X_e,
\)
that is, the term $Y_{h_j}$ contributes only a single factor.
\end{enumerate}
We now treat each of the two cases separately.

 \textbf{Case (1)}: Assume that $h_{1},\ldots,h_{r}\neq e$. That is, each $Y_{h_k}$ can be written as a product of three factors of the form
\begin{equation*}
    Y_{h_k}
    =
    U_{h_k}^*
    \left[
        \sum_{l \in G}
        E_{l,l}\, X\, E_{h_k^{-1}l,\,h_k^{-1}l}
    \right]
    U_{h_k}^*
    =
    U_{h_k}^*\widetilde{X}_{h_k}U_{h_k}^*,
    \qquad
    k=1,2,\ldots,r.
\end{equation*}
By the product formula for cumulants, we have
\begin{equation*}
    \kappa_r(Y_{h_{1}},Y_{h_{2}},\ldots,Y_{h_{r}})
    =
    \sum_{\substack{\pi\in \mathcal{NC}(3r)\\\pi\vee\sigma=1_{3r}}}
    \kappa_\pi
    \big[
        U_{h_{1}}^*,\widetilde{X}_{h_1},U_{h_{1}}^*,
        U_{h_{2}}^*,\widetilde{X}_{h_2},U_{h_{2}}^*,
        \ldots,
        U_{h_{r}}^*,\widetilde{X}_{h_r},U_{h_{r}}^*
    \big],
\end{equation*}
where $\sigma=\{V_1,\ldots,V_r\}$ with
$V_j=\{3j-2,3j-1,3j\}$ for each $j=1,2,\ldots,r$. Observe that each triple
\(
(U_{h_i}^*,\widetilde{X}_{h_i},U_{h_i}^*)
\) 
is associated with the triple of points
\(
(3i-2,\,3i-1,\,3i),
\)
for every $i=1,2,\ldots,r$. By Lemma \ref{lema ind X y U}, any partition
$\pi\in \mathcal{NC}(3r)$ satisfying $\pi\vee\sigma=1_{3r}$ and containing a block with both $X$-variables and $U$-variables simultaneously yields
\(
\kappa_\pi\big[
U_{h_{1}}^*,\widetilde{X}_{h_1},U_{h_{1}}^*,\ldots,U_{h_{r}}^*,\widetilde{X}_{h_r},U_{h_{r}}^*\big]=0.
\)
Therefore, in order for
\(\kappa_\pi\big[U_{h_{1}}^*,\widetilde{X}_{h_1},U_{h_{1}}^*,\ldots,U_{h_{r}}^*,\widetilde{X}_{h_r},U_{h_{r}}^*\big]
\neq 0,
\)
every block of $\pi$ must contain exclusively variables of type $X$ or exclusively variables of type $U$. Now consider a partition $\pi\in \mathcal{NC}(3r)$ with $\pi\vee\sigma=1_{3r}$ such that
\begin{equation}
\label{eq_cum_no_0}
\kappa_\pi\big[
U_{h_1}^*,\widetilde X_{h_1},U_{h_1}^*,
\dots, U_{h_r}^*,\widetilde X_{h_r},U_{h_r}^* \big]\neq 0.
\end{equation}
Let $V\in\pi$ be a $U$-block. Without loss of generality, assume that the point
\(
b:=3i
\)
belongs to $V$. The block $V$ cannot be a singleton, since otherwise Lemma \ref{lemma-U-circ} would imply that
\( 
\kappa_1(U_{h_i}^\ast)=0,
\)
contradicting \eqref{eq_cum_no_0}.

Let $b^{(1)}\in V$ denote the next element of the same block lying to the right of $b$ (if no such element exists and the next element lies to the left, the argument is analogous). There are two possibilities:
\[
b^{(1)}=3j-2
\qquad\text{or}\qquad
b^{(1)}=3j,
\]
for some $j>i$. Notice that if $b^{(1)}=3j-2$, then the condition
$\pi\vee\sigma=1_{3r}$ is violated unless $j=i+1$. Assume first that $b^{(1)}=3j$, as illustrated in the following diagram:
\[
\begin{tikzpicture}[scale=0.8]
    \def\dx{1.3}
    \def\drop{1}

    \foreach \k in {0,1,2,3,4,5,7,8,9,10,11}
        \fill (\k*\dx,0) circle (1.5pt);

    \node[above=3pt] at (0*\dx,0) {$U_{h_i}^*$};
    \node[above=3pt] at (1*\dx,0) {$\widetilde{X}_{h_i}$};
    \node[above=3pt] at (2*\dx,0) {$U_{h_i}^*$};

    \node[above=3pt] at (2*\dx,-2) {$b$};
    \node[above=3pt] at (11*\dx,-2) {$b^{(1)}$};
    
    \node[above=3pt] at (3*\dx,0) {$U_{h_{i+1}}^*$};
    \node[above=3pt] at (4*\dx,0) {$\widetilde{X}_{h_{i+1}}$};
    \node[above=3pt] at (5*\dx,0) {$U_{h_{i+1}}^*$};

    \node at (6*\dx,0.15) {$\cdots$};

    \node[above=3pt] at (7*\dx,0) {$\widetilde{X}_{h_{j-1}}$};
    \node[above=3pt] at (8*\dx,0) {$U_{h_{j-1}}^*$};
    \node[above=3pt] at (9*\dx,0) {$U_{h_j}^*$};
    \node[above=3pt] at (10*\dx,0) {$\widetilde{X}_{h_j}$};
    \node[above=3pt] at (11*\dx,0) {$U_{h_j}^*$};

    \draw[thick] (2*\dx,0) -- ++(0,-\drop);
    \draw[thick] (11*\dx,0) -- ++(0,-\drop);
    \draw[thick] (2*\dx,-\drop) -- ++(-1,0);
    \draw[thick] (2*\dx,-\drop) -- (11*\dx,-\drop);
    \draw[thick] (11*\dx,-\drop) -- ++(1,0);

    \draw[red,dashed] ({2.5*\dx},1) -- ({2.5*\dx},-0.8);
    \draw[red,dashed] ({5.5*\dx},1) -- ({5.5*\dx},-0.8);
    \draw[red,dashed] ({8.5*\dx},1) -- ({8.5*\dx},-0.8);

\end{tikzpicture}
\]
The following three lemmas are importants in the rest of the proof:
\begin{lemma}\label{lema-1-dem}
Suppose that an $X$-block is an interval. Then such an interval cannot have odd cardinality.
\end{lemma}

\begin{proof}
Assume for contradiction, that the $X$-block is an interval of odd cardinality of the form
\[
\big\{
3b_1-1,,
3(b_1+1)-1,,
3(b_1+2)-1,,
\ldots,,
3(b_1+2t)-1
\big\},
\]
for some $t\geq 0$. Such a configuration is represented in the following diagram:
\[
\begin{tikzpicture}[scale=1.1]
    \def\dx{1.2}
    \def\drop{1}

    \foreach \k in {0,1,2,3,4,5,7,8,9}
        \fill (\k*\dx,0) circle (1.5pt);

    
    \node[above=3pt] at (0*\dx,0) {$\widetilde X_{h_{b_1}}$};
    \node[above=3pt] at (1*\dx,0) {$U^\ast_{h_{b_1}}$};
    \node[above=3pt] at (2*\dx,0) {$U^\ast_{h_{b_1+1}}$};

    \node[above=3pt] at (3*\dx,0) {$\widetilde X_{h_{b_1+1}}$};
    \node[above=3pt] at (4*\dx,0) {$U^\ast_{h_{b_1+1}}$};
    \node[above=3pt] at (5*\dx,0) {$U^\ast_{h_{b_1+2}}$};

    \node at (6*\dx,0.15) {$\cdots$};

    \node[above=3pt] at (7*\dx,0) {$U^\ast_{h_{b_1+2t-1}}$};
    \node[above=3pt] at (8*\dx,0) {$U^\ast_{h_{b_1+2t}}$};
    \node[above=3pt] at (9*\dx,0) {$\widetilde X_{h_{b_1+2t}}$};

    \draw[thick] (0*\dx,0) -- ++(0,-\drop);
    \draw[thick] (3*\dx,0) -- ++(0,-\drop);
    \draw[thick] (9*\dx,0) -- ++(0,-\drop);
    \draw[thick] (0*\dx,-\drop) -- (9*\dx,-\drop);

    \draw[thick] (1*\dx,0) -- ++(0,-0.6);
    \draw[thick] (2*\dx,0) -- ++(0,-0.6);
    \draw[thick] (1*\dx,-0.6) -- (2*\dx,-0.6);

    \draw[thick] (4*\dx,0) -- ++(0,-0.6);
    \draw[thick] (5*\dx,0) -- ++(0,-0.6);
    \draw[thick] (4*\dx,-0.6) -- (5*\dx,-0.6);

    \draw[thick] (7*\dx,0) -- ++(0,-0.6);
    \draw[thick] (8*\dx,0) -- ++(0,-0.6);
    \draw[thick] (7*\dx,-0.6) -- (8*\dx,-0.6);

    \draw[red,dashed] ({1.5*\dx},1) -- ({1.5*\dx},-0.8);
    \draw[red,dashed] ({4.5*\dx},1) -- ({4.5*\dx},-0.8);
    \draw[red,dashed] ({7.5*\dx},1) -- ({7.5*\dx},-0.8);
\end{tikzpicture}
\]
Since we are assuming that
\(\kappa_\pi[\cdots]\neq 0,
\)
it follows that
\[
\kappa_2\Big(U^\ast_{h_{b_1}},U^\ast_{h_{b_1+1}}\Big)\neq 0,
\qquad
\kappa_2\Big(U^\ast_{h_{b_1+1}},U^\ast_{h_{b_1+2}}\Big)\neq 0,
\qquad \ldots, \qquad
\kappa_2\Big(U^\ast_{h_{b_1+2t-1}},U^\ast_{h_{b_1+2t}}\Big)\neq 0,
\]
and
\[
\kappa_{2t+1}
\Big(
\widetilde X_{h_{b_1}},
\widetilde X_{h_{b_1+1}},
\ldots,
\widetilde X_{h_{b_1+2t}}
\Big)
\neq 0.
\]
By Lemma \ref{lemma-U-circ} and Corollary \ref{cor_X}, we obtain
\[
h_{b_1}h_{b_1+1}=e,
\qquad
h_{b_1+1}h_{b_1+2}=e,
\qquad
\ldots ,
\qquad
h_{b_1+2t-1}h_{b_1+2t}=e,
\]
and
\[
h_{b_1+2t}h_{b_1+2t-1}\cdots h_{b_1+2}h_{b_1+1}h_{b_1}=e.
\]
Combining these relations yields
\(
h_{b_1+2t}=e,
\)
which is impossible. This contradiction shows that an $X$-block that forms an interval must have even cardinality.
\end{proof}
Moreover, the previous lemma shows that if the $X$-block is of the form
\[
\big\{
3b_1-1,
3(b_1+1)-1,
3(b_1+2)-1,
3(b_1+3)-1,
\ldots,
3(b_1+2t)-1,
3(b_1+2t+1)-1
\big\},
\]
that is, an interval $X$-block of even cardinality, then the corresponding variables are necessarily
\[
\Big\{
X_{h_{b_1}},
X_{h_{b_1}^{-1}},
X_{h_{b_1}},
X_{h_{b_1}^{-1}},
\ldots,
X_{h_{b_1}},
X_{h_{b_1}^{-1}}
\Big\}.
\]
\begin{lemma}\label{lema-2-dem}
Let $V$ be an $X$-block (not necessarily an interval). If $3a-1$ and $3c-1$, with $a<c$, are consecutive elements of $V$, then they correspond to the variables $X_h$ and $X_{h^{-1}}$, respectively, for some $h\in G$.
\end{lemma}

\begin{proof}
The proof proceeds by induction on the number of nested blocks. First, suppose that the consecutive elements $3a-1$ and $3c-1$ contain exactly one nested block. In this case, the nested block corresponds to the interval generated by the elements $U_{h_a}^\ast$ and $U_{h_c}^\ast$, as illustrated in the following diagram:

\[
\begin{tikzpicture}[scale=0.8]
\def\dx{1.55}
\def\drop{0.8}
\foreach \k in {0,...,3}
\fill (\k*\dx,0) circle (1.5pt);
\node[above=3pt] at (0*\dx,0) {$\widetilde X_{h_a}$};
\node[above=3pt] at (1*\dx,0) {$U^\ast_{h_a}$};
\node[above=3pt] at (2*\dx,0) {$U_{h_c}^\ast$};
\node[above=3pt] at (3*\dx,0) {$\widetilde X_{h_c}$};

```
\draw[thick] (0*\dx,0) -- ++(0,-\drop);
\draw[thick] (3*\dx,0) -- ++(0,-\drop);
\draw[thick] ($(0*\dx,-\drop)+(-1,0)$) -- ($(3*\dx,-\drop)+(1,0)$);

\draw[thick] (1*\dx,0) -- ++(0,-0.5);
\draw[thick] (2*\dx,0) -- ++(0,-0.5);
\draw[thick] (1*\dx,-0.5) -- (2*\dx,-0.5);

\draw[red,dashed] ({1.5*\dx},1) -- ({1.5*\dx},-0.8);
```

\end{tikzpicture}
\]
Since we are assuming that
\(
\kappa_\pi[\cdots]\neq 0,
\)
it follows that
\[
\kappa_2(U_{h_a}^*,U_{h_c}^*)\neq 0 \quad \Rightarrow\quad h_ah_c=e\quad \Rightarrow\quad h_c=h_a^{-1}.
\]
Assume now that the statement holds whenever there are between $1$ and $s$ nested blocks between the elements $3a-1$ and $3c-1$. We shall prove it in the case where there are $s+1$ nested blocks.

Observe that if there is an $X$-block nested between the elements $3a-1$ and $3c-1$, then, by the induction hypothesis, the consecutive elements of this nested $X$-block correspond to the variables $\widetilde X_h$ and $\widetilde X_{h^{-1}}$, respectively. Without loss of generality, suppose that there are $t$ nested $X$-blocks of this type.

The $m$-th such block has the form illustrated in the following diagram:
\[
\begin{tikzpicture}[scale=0.75,transform shape]
    \def\dx{1.3}
    \def\drop{1}
    \def\bigdrop{1.55}

    \foreach \k in {0,1,2,4,5,6,7,9,10,11,12,14,15,16}
        \fill (\k*\dx,0) circle (1.5pt);

\fill (6.25*\dx,0) circle (1.5pt);
\fill (6.75*\dx,0) circle (1.5pt);
    \draw[thick] (6.25*\dx,0) -- ++(0,-0.6);
    \draw[thick] (6.75*\dx,0) -- ++(0,-0.6);
    \draw[thick] (6.25*\dx,-0.6) -- (6.75*\dx,-0.6);

\fill (9.25*\dx,0) circle (1.5pt);
\fill (9.75*\dx,0) circle (1.5pt);
    \draw[thick] (9.25*\dx,0) -- ++(0,-0.6);
    \draw[thick] (9.75*\dx,0) -- ++(0,-0.6);
    \draw[thick] (9.25*\dx,-0.6) -- (9.75*\dx,-0.6);

    \node[above=3pt] at (0*\dx,0) {$\widetilde X_{h_a}$};

    \node[above=3pt] at (1*\dx,0) {$U_{h_a}^\ast$};
    \node[above=3pt] at (2*\dx,0) {$U_{h_{a_1}}^\ast$};

    \draw[thick] (1*\dx,0) -- ++(0,-0.6);
    \draw[thick] (2*\dx,0) -- ++(0,-0.6);
    \draw[thick] (1*\dx,-0.6) -- (2*\dx,-0.6);

    \node at (3*\dx,0.15) {$\cdots$};

    \node[above=3pt] at (4*\dx,0) {$U_{h_{a_{m-1}}^{-1}}^\ast$};
    \node[above=3pt] at (5*\dx,0) {$U_{h_{a_m}}^\ast$};

    \draw[thick] (4*\dx,0) -- ++(0,-0.6);
    \draw[thick] (5*\dx,0) -- ++(0,-0.6);
    \draw[thick] (4*\dx,-0.6) -- (5*\dx,-0.6);

    \node[above=3pt] at (6*\dx,0) {$\widetilde X_{h_{a_m}}$};
    \node[above=3pt] at (7*\dx,0) {$\widetilde X_{h_{a_m}^{-1}}$};

    \node at (8*\dx,0.15) {$\cdots$};

    \node[above=3pt] at (9*\dx,0) {$\widetilde X_{h_{a_m}}$};
    \node[above=3pt] at (10*\dx,0) {$\widetilde X_{h_{a_m}^{-1}}$};

    \node[above=3pt] at (11*\dx,0) {$U_{h_{a_m}^{-1}}^\ast$};
    \node[above=3pt] at (12*\dx,0) {$U_{h_{a_m+1}}^\ast$};

    \draw[thick] (11*\dx,0) -- ++(0,-0.6);
    \draw[thick] (12*\dx,0) -- ++(0,-0.6);
    \draw[thick] (11*\dx,-0.6) -- (12*\dx,-0.6);

    \node at (13*\dx,0.15) {$\cdots$};

    \node[above=3pt] at (14*\dx,0) {$U_{h_{a_t}^{-1}}^\ast$};
    \node[above=3pt] at (15*\dx,0) {$U_{h_{c}}^\ast$};

    \draw[thick] (14*\dx,0) -- ++(0,-0.6);
    \draw[thick] (15*\dx,0) -- ++(0,-0.6);
    \draw[thick] (14*\dx,-0.6) -- (15*\dx,-0.6);

    \node[above=3pt] at (16*\dx,0) {$\widetilde X_{h_c}$};

    \foreach \k in {6,7,9,10}
        \draw[thick] (\k*\dx,0) -- ++(0,-\drop);
    \draw[thick] (6*\dx,-\drop) -- (10*\dx,-\drop);

    \draw[thick] (0*\dx,0) -- ++(0,-\bigdrop);
    \draw[thick] (16*\dx,0) -- ++(0,-\bigdrop);
    \draw[thick] (0*\dx,-\bigdrop) -- (16*\dx,-\bigdrop);

    \draw[red,dashed] ({1.5*\dx},1.3) -- ({1.5*\dx},-1.3);
    \draw[red,dashed] ({4.5*\dx},1.3) -- ({4.5*\dx},-1.3);
    \draw[red,dashed] ({6.5*\dx},1.3) -- ({6.5*\dx},-1.3);

\draw[red,dashed] ({11.5*\dx},1.3) -- ({11.5*\dx},-1.3);
\draw[red,dashed] ({14.5*\dx},1.3) -- ({14.5*\dx},-1.3);
\draw[red,dashed] ({9.5*\dx},1.3) -- ({9.5*\dx},-1.3);

\end{tikzpicture}
\]
Since we are assuming that
\(
\kappa_\pi[\cdots]\neq 0,
\)
every $U$-block contained between the elements $3a-1$ and $3c-1$ must have a non-vanishing associated cumulant. 

We now introduce a convenient terminology. Given a sequence
\(
U_{g_1},U_{g_2},\dots,U_{g_n},
\)
and a partition $\pi$ of $\{1,\dots,n\}$, we say that the indices
\[
g_1,g_2,\dots,g_n
\]
are multiplied \emph{according to the nesting structure of $\pi$} if the product associated with each block is evaluated recursively, beginning with the innermost blocks and proceeding outward. More precisely, if $V=\{i_1<\cdots<i_r\}$ is a block of $\pi$, we associate to $V$ the product
\[
g_{i_1}g_{i_2}\cdots g_{i_r}.
\]
Whenever $V$ contains inner blocks of $\pi$, these are evaluated first and their values are substituted into the product corresponding to $V$. In this way, the final evaluation is completely determined by the nesting structure of the partition.

\begin{Example}
Consider the sequence
\[
U_{h_a}\,,
U_{h_{a_1}}\,,
U_{h_{a_1}^{-1}}\,,
U_{h_{a_2}}\,,
U_{h_{a_2}^{-1}}\,,
U_{h_{a_3}}\,,
U_{h_{a_3}^{-1}}\,,
U_{h_c},
\]
and the partition
\(
\pi=\big\{\{1,2,7,8\},\{3,4\},\{5,6\}\big\}.
\)
The blocks $\{3,4\}$ and $\{5,6\}$ are inner blocks, while $\{1,2,7,8\}$ is the outer block. To each block we associate the product of the indices appearing in the corresponding positions:
\[
\{3,4\}
\longmapsto
h_{a_1}^{-1}h_{a_2},
\qquad
\{5,6\}
\longmapsto
h_{a_2}^{-1}h_{a_3},
\qquad
\text{and}\qquad
\{1,2,7,8\}
\longmapsto
h_a,h_{a_1},h_{a_3}^{-1},h_c.
\]
Suppose that
\[
h_{a_1}^{-1}h_{a_2}=e,
\qquad
h_{a_2}^{-1}h_{a_3}=e,
\qquad
h_a,h_{a_1},h_{a_3}^{-1},h_c=e.
\]
We first evaluate the inner blocks
\[
h_{a_1}^{-1}h_{a_2}=e,
\qquad
h_{a_2}^{-1}h_{a_3}=e.
\]
Substituting these identities into the total product yields
\[
h_a\,h_{a_1}\,
\bigl(h_{a_1}^{-1}h_{a_2}\bigr)\,
\bigl(h_{a_2}^{-1}h_{a_3}\bigr)\,
h_{a_3}^{-1}\,h_c
=
h_a\,h_{a_1}\,e\,e\,h_{a_3}^{-1}\,h_c
=
h_a\,h_{a_1}\,h_{a_3}^{-1}\,h_c.
\]
Finally, using the condition associated with the outer block, we obtain
\(
h_a\,h_{a_1}\,h_{a_3}^{-1}\,h_c=e.
\)
Hence, the total product of the indices, evaluated according to the nesting structure of the partition $\pi$, is equal to the identity element.
\end{Example}
To complete the proof, consider the sequence of $U$-variables lying between the elements $3a-1$ and $3c-1$, namely
\[
U_{h_a}^\ast,
U_{h_{a_1}}^\ast,
\cdots,
U_{h_{a_{m-1}}^{-1}}^\ast,
U_{h_{a_m}}^\ast,
U_{h_{a_m}^{-1}}^\ast,
U_{h_{a_m+1}}^\ast,
\cdots,
U_{h_{a_t}^{-1}}^\ast,
U_{h_c}^\ast.
\]
Multiplying the corresponding indices according to the nesting structure yields
\[
h_a
h_{a_1}
\cdots
h_{a_{m-1}}^{-1}
h_{a_m}
h_{a_m}^{-1}
h_{a_m+1}
\cdots
h_{a_t}^{-1}
h_c
=e.
\]
Since every nested block contributes the identity, all intermediate factors cancel, and we obtain
\(
h_ah_c=e.
\)
Therefore,
\(
h_c=h_a^{-1}.
\)
This completes the induction and hence the proof of the lemma.

\end{proof}

\begin{lemma}
Let
\begin{equation*}
    B^{(i,j)}
    =
    \big\{
    3(i+1)-2,3(i+1)-1,3(i+1),
    \ldots,
    3(j-1)-2,3(j-1)-1,3(j-1),
    3j-2,3j-1
    \big\}.
\end{equation*}
Then
\(
|B^{(i,j)}| = b^{(1)}-b-1.
\)
If
\(
\widehat{\pi}\in \mathcal{NC}\big(B^{(i,j)}\big)
\cong \mathcal{NC}\big(b^{(1)}-b-1\big),
\)
then
\begin{equation}
\kappa_{\widehat{\pi}}
\big[
U_{h_{i+1}}^*,
\widetilde{X}_{h_{i+1}},
U_{h_{i+1}}^*,
\dots,
\widetilde{X}_{h_{j-1}},
U_{h_{j-1}}^*,
U_{h_j}^*,
\widetilde{X}_{h_j}
\big]
=0.
\end{equation}
\end{lemma}
\begin{proof}
    The proof proceeds by induction on
\(
l:=\frac{b^{(1)}-b-3}{3}.
\)
For the base case $l=1$, we have $b=3i$ and $b^{(1)}=3(i+2)$. Thus, we are in the situation represented by the following diagram
\[
\begin{tikzpicture}[scale=0.8]
\def\dx{1.5}
\def\drop{1}

```
\foreach \k in {0,...,8}
    \fill (\k*\dx,0) circle (1.5pt);

\node[above=3pt] at (0*\dx,0) {$U_{h_{i}}^*$};
\node[above=3pt] at (1*\dx,0) {$\widetilde{X}_{h_{i}}$};
\node[above=3pt] at (2*\dx,0) {$U_{h_{i}}^*$};

\node[above=3pt] at (2*\dx,-2) {$b$};
\node[above=3pt] at (8*\dx,-2) {$b^{(1)}$};

\node[above=3pt] at (3*\dx,0) {$U_{h_{i+1}}^*$};
\node[above=3pt] at (4*\dx,0) {$\widetilde{X}_{h_{i+1}}$};
\node[above=3pt] at (5*\dx,0) {$U_{h_{i+1}}^*$};

\node[above=3pt] at (6*\dx,0) {$U_{h_{i+2}}^*$};
\node[above=3pt] at (7*\dx,0) {$\widetilde{X}_{h_{i+2}}$};
\node[above=3pt] at (8*\dx,0) {$U_{h_{i+2}}^*$};

\draw[thick] (2*\dx,0) -- ++(0,-\drop);
\draw[thick] (8*\dx,0) -- ++(0,-\drop);
\draw[thick] ($(2*\dx,-\drop)+(-1,0)$) -- ($(8*\dx,-\drop)+(1,0)$);

\draw[red,dashed] ({-0.5*\dx},1) -- ({-0.5*\dx},-0.8);
\draw[red,dashed] ({8.5*\dx},1) -- ({8.5*\dx},-0.8);
\draw[red,dashed] ({2.45*\dx},1) -- ({2.45*\dx},-0.8);
\draw[red,dashed] ({5.45*\dx},1) -- ({5.45*\dx},-0.8);
```

\end{tikzpicture}
\]
It is straightforward to verify that for every partition
\(
\widehat{\pi}\in \mathcal{NC}\big(B^{(i,i+2)}\big)
\)
satisfying
\(
\widehat{\pi}\vee\widehat{\sigma}=1_5\), where \( \widehat{\sigma}=\{\{1,2,3\}, \{4,5\}\},
\)
one has
\[
\kappa_{\widehat\pi}\Big[U_{h_{i+1}}^*,
\widetilde{X}_{h_{i+1}},
U_{h_{i+1}}^*,
U_{h_{i+2}}^*,
\widetilde{X}_{h_{i+2}}\Big]=0.
\]
Assume now that for every $k\le l$ and every partition
\(
\widehat{\pi}\in \mathcal{NC}\big(B^{(i,i+k)}\big),
\)
the following holds:
\[
\kappa_{\widehat\pi}\Big[U_{h_{i+1}}^*,
\widetilde{X}_{h_{i+1}},
U_{h_{i+1}}^*,...,U_{h_{i+k-1}}^*,
\widetilde{X}_{h_{i+k-1}},U_{h_{i+k-1}}^*,
U_{h_{i+k}}^*,
\widetilde{X}_{h_{i+k}}\Big]=0.
\]
We show that the result remains valid for $l+1$. Thus, we are in the situation illustrated below:

\[
\begin{tikzpicture}[scale=0.8]
    \def\dx{1.6}
    \def\drop{1}

    \foreach \k in {0,1,2,3,5,6,7,8,9}
        \fill (\k*\dx,0) circle (1.5pt);

    \node[above=3pt] at (0*\dx,-2) {$b$};
    \node[above=3pt] at (9*\dx,-2) {$b^{(1)}$};
    
    \node[above=3pt] at (0*\dx,0) {$U_{h_i}^*$};

    \node[above=3pt] at (1*\dx,0) {$U_{h_{i+1}}^*$};
    \node[above=3pt] at (2*\dx,0) {$\widetilde{X}_{h_{i+1}}$};
    \node[above=3pt] at (3*\dx,0) {$U_{h_{i+1}}^*$};

    \node at (4*\dx,0.15) {$\cdots$};
    \node[above=3pt] at (5*\dx,0) {$\widetilde{X}_{h_{i+l}}$};
    \node[above=3pt] at (6*\dx,0) {$U_{h_{i+l}}^*$};
    \node[above=3pt] at (7*\dx,0) {$U_{h_{i+l+1}}^*$};
    \node[above=3pt] at (8*\dx,0) {$\widetilde{X}_{h_{i+l+1}}$};
    \node[above=3pt] at (9*\dx,0) {$U_{h_{i+l+1}}^*$};

    \draw[thick] (0*\dx,0) -- ++(0,-\drop);
    \draw[thick] (9*\dx,0) -- ++(0,-\drop);
    \draw[thick] ($(0*\dx,-\drop)+(-1,0)$) -- ($(9*\dx,-\drop)+(1,0)$);

    \draw[red,dashed] ({0.5*\dx},1) -- ({0.5*\dx},-0.8);
    \draw[red,dashed] ({3.5*\dx},1) -- ({3.5*\dx},-0.8);
    \draw[red,dashed] ({6.5*\dx},1) -- ({6.5*\dx},-0.8);
    \draw[red,dashed] ({9.5*\dx},1) -- ({9.5*\dx},-0.8);

\end{tikzpicture}
\]
Observe that if the point $3(i+1)$ were connected to the point $3(i+1)-2$, then the point $3(i+1)-1$ would necessarily be a singleton and by Corollary \ref{cor_X}, this would imply
\(
\kappa_1\big(\widetilde{X}_{h_{i+1}}\big)=0,
\)

Therefore, the next element to the right of the point $3(i+1)$ belonging to the same block can either be a point of the form $3k$ for some $k>i+1$, or it can be precisely the point $3(i+2)-2$. Suppose first that it is the point $3k$, then we are in a configuration of the form:
\[
\begin{tikzpicture}[scale=1]
    \def\dx{1.5}
    \def\drop{1}

    \foreach \k in {0,1,2,3,5,6,7,9}
        \fill (\k*\dx,0) circle (1.5pt);

    \node[above=3pt] at (0*\dx,-1.8) {$b$};
    \node[above=3pt] at (9*\dx,-1.8) {$b^{(1)}$};
    
    \node[above=3pt] at (0*\dx,0) {$U_{h_{i}}^*$};
    \node[above=3pt] at (1*\dx,0) {$U_{h_{i+1}}^*$};
    \node[above=3pt] at (2*\dx,0) {$\widetilde{X}_{h_{i+1}}$};
    \node[above=3pt] at (3*\dx,0) {$U_{h_{i+1}}^*$};

    \node at (4*\dx,0.1) {$\cdots$};

    \node[above=3pt] at (5*\dx,0) {$U_{h_{k}}^*$};
    \node[above=3pt] at (6*\dx,0) {$\widetilde{X}_{h_{k}}$};
    \node[above=3pt] at (7*\dx,0) {$U_{h_{k}}^*$};

    \node at (8*\dx,0.1) {$\cdots$};

    \node[above=3pt] at (9*\dx,0) {$U_{h_{i+l+1}}^*$};

    \draw[thick] (0*\dx,0) -- ++(0,-\drop);
    \draw[thick] (9*\dx,0) -- ++(0,-\drop);
    \draw[thick] (0*\dx,-\drop) -- (9*\dx,-\drop);
    \draw[thick] (9*\dx,-\drop) -- ++(1,0); 

    \draw[thick] (3*\dx,0) -- ++(0,-0.7);
    \draw[thick] (7*\dx,0) -- ++(0,-0.7);
    \draw[thick] (3*\dx,-0.7) -- (7*\dx,-0.7);
    \draw[thick] (7*\dx,-0.7) -- ++(1,0); 

    \draw[red,dashed] ({0.5*\dx},1) -- ({0.5*\dx},-0.8);
    \draw[red,dashed] ({3.5*\dx},1) -- ({3.5*\dx},-0.8);
    \draw[red,dashed] ({4.5*\dx},1) -- ({4.5*\dx},-0.8);
    \draw[red,dashed] ({7.5*\dx},1) -- ({7.5*\dx},-0.8);
\end{tikzpicture}
\]
Applying the induction hypothesis to the interval partition associated with $\widehat\sigma\in NC\big(B^{(i+1,k)}\big)$ 
we obtain
\[
\kappa_{\widehat\sigma}
\big[
U_{h_{i+2}}^\ast,
\widetilde X_{h_{i+2}},
U_{h_{i+2}}^\ast,
\dots,
U_{h_k}^\ast,
\widetilde X_{h_k}
\big]
=0.
\]
This contradiction shows that the point $3(i+1)$ must be connected to the point $3(i+2)-2$. Repeating the same argument recursively yields
\(
3s \sim_{\widehat{\pi}} 3(s+1)-2
\), for all \(s=i+1,i+2,\ldots,i+l.
\)

By Lemma \ref{lema-2-dem}, every $X$-block contained in
\(
\widehat{\pi}\in NC\big(B^{(i,i+l+1)}\big)
\) has consecutive elements of the form
\(
(\widetilde X_h,\widetilde X_{h^{-1}}).
\)
Therefore, considering the sequence
\(
U_{h_{i+1}}^\ast,
U_{h_{i+2}}^\ast,
U_{h_{i+3}}^\ast,
U_{h_{i+4}}^\ast,
\dots,
U_{h_{i+l}}^\ast,
U_{h_{i+l+1}}^\ast,
\)
and multiplying the corresponding indices according to the nesting structure of the restriction of $\widehat{\pi}$ to the $U$-elements, we obtain
\(
h_{i+l+1}=e,
\)
which is a contradiction. This completes the inductive argument and hence the proof of the lemma. 
\end{proof}
Consequently, we conclude that
\(
b^{(1)}=3(i+1)-2.
\)
That is,
\(
3i\sim_{\pi} 3(i+1)-2\), for all \(i=1,2,\dots,r
\quad (\mathrm{mod}\ r).
\)
Having established this, and using Lemma \ref{lema-2-dem}, it follows immediately that $r$ must be even and that
\[
h_1=h_3=h_5=\cdots=h_{r-1},
\qquad
h_2=h_4=h_6=\cdots=h_r,
\]
with
\(
h_1=h_2^{-1}.
\)
However, this contradicts our assumption that the indices
\(
h_1,\dots,h_r
\)
do not all belong to the same equivalence class of the partition
\[
\{e\},
\qquad
\{g,g^{-1}\},
\quad g\in G\setminus\{e\}.
\]
Therefore,
\(
\kappa_r\bigl(Y_{h_1},Y_{h_2},\ldots,Y_{h_r}\bigr)=0.
\)

\textbf{Case (2):} Suppose that for some $h_1,\ldots,h_r\in G$ we have $h_1=h_2=\cdots=h_r=e$, and for any other $j$ we have $h_j\neq e$. Define
\begin{equation*}\Lambda_j=\big|{m: 1\leq m<j,\ \ h_m=e}\big|,\qquad
\Lambda=\big|{m: 1\leq m\leq r,\ \ h_m=e}\big|.
\end{equation*}
We define the partition $\sigma=\{V_1,\ldots,V_r\}$ as an interval partition of the form
\(
V_j=\{3j-2\Lambda_j-2\}
\)
if $h_j=e$, and
\(
V_j=\{3j-2\Lambda_j-2\,,3j-2\Lambda_j-1\,,3j-2\Lambda_j\}
\)
if $h_j\neq e$. Then, by the product formula for cumulants, we have
\begin{align*}
\kappa_r(Y_{h_{1}},Y_{h_{2}},\ldots,Y_{h_{r}})
=
\sum_{\substack{\pi\in \mathcal{NC}(3r-2\Lambda)\\ \pi\vee\sigma=1_{3r-2\Lambda}}}
\kappa_\pi[Z_1,\ldots,Z_{3r-2\Lambda}],
\end{align*}
where the variables $Z_m$, for $m=1,2,\ldots,3r-2\Lambda$, are defined as follows:
\begin{itemize}
\item If $m\in V_j$ and $h_j=e$, then $Z_m=X_e$.
\item If $m\in V_j$ and $h_j\neq e$, then
\begin{itemize}
\item if $m=3j-2\Lambda_j-2$ or $m=3j-2\Lambda_j$, then $Z_m=U_{h_j}^*$;
\item if $m=3j-2\Lambda_j-1$, then $Z_m=\widetilde{X}_{h_j}$.
\end{itemize}
\end{itemize}
Observe that, by Lemma~\ref{lema ind X y U}, the partitions
\(
\pi\in \mathcal{NC}(3r-2\Lambda)
\)
satisfying $\pi\vee\sigma=1_{3r-2\Lambda}$ must contain blocks that are either of $U$-type or of $X$-type. Otherwise, we would have
\(
\kappa_\pi\big[Z_1,Z_2,\ldots,Z_{3r-2\Lambda}\big]=0.
\)
Let $\pi\in \mathcal{NC}(3r-2\Lambda)$ be such that $\pi\vee\sigma=1_{3r-2\Lambda}$ and suppose that
\begin{equation*}
        \kappa_\pi\big[Z_1,Z_2,...,Z_{3r-2\Lambda}\big]\neq 0.
    \end{equation*}
Let $W\in\pi$ be an $X$-block, and let
\(
b:=3i-2\Lambda_i-2\in W
\)
be such that $Z_b=X_e$. The block $W$ must contain more than one element, since otherwise the condition $\pi\vee\sigma=1_{3r-2\Lambda}$ would not be satisfied.

Without loss of generality, let $b^{(1)}\in W$ be the first element to the right of $b$ that belongs to the same block (the case in which there is only such an element to the left is analogous). By free independence, this element must correspond to a variable of the form $\widetilde X_{h_i}$. At this point, two cases arise: either $Z_{b^{(1)}}=X_e$, or
\(
Z_{b^{(1)}}=U_{h_j}^\ast\widetilde X_{h_j}U_{h_j}^*
\)
with $h_j\neq e$. Observe that if $Z_{b^{(1)}}=X_e$ and $b^{(1)}\neq b+1$, then the condition $\pi\vee\sigma=1_{3r-2s}$ is violated. Indeed, in that case the interval
\(
[b+1,b^{(1)}-1]
\)
is separated from its complement by $\pi$. Therefore, either $b^{(1)}=b+1$ or
\(
Z_{b^{(1)}}=U_{h_j}^\ast\widetilde X_{h_j}U_{h_j}^*
\)
for some $h_j\neq e$.

Assume first that $b^{(1)}$ is such that
\(
Z_{b^{(1)}}=U_{h_j}^\ast\widetilde X_{h_j}U_{h_j}^*
\)
with $h_j\neq e$. We shall prove that
\[
\kappa_\pi\big[Z_1,Z_2,\ldots,Z_{3r-2s}\big]=0.
\]
Consequently, the only partitions $\pi\in \mathcal{NC}(3r-2\Lambda)$ satisfying $\pi\vee\sigma=1_{3r-2\Lambda}$ that may contribute are those for which
\(
b^{(1)}=b+1.
\)
The proof proceeds by induction on the nesting bocks. In the case where the elements $b$ and $b^{(1)}$ have exactly one nesting block between them, that nesting block corresponds to the singleton block containing $U_{h_j}^\ast$, as illustrated in the following diagram:
\[
\begin{tikzpicture}[scale=0.8]
    \def\dx{1.5}
    \def\drop{0.8}
    \foreach \k in {0,...,3}
        \fill (\k*\dx,0) circle (1.5pt);
    \node[above=3pt] at (0*\dx,0) {$X_{e}$};
    \node[above=3pt] at (1*\dx,0) {$U^\ast_{h_j}$};
    \node[above=3pt] at (2*\dx,0) {$\widetilde X_{h_j}$};
    \node[above=3pt] at (3*\dx,0) {$U_{h_j}^\ast$};

    \node[above=3pt] at (0*\dx,-1.6) {$b$};
\node[above=3pt] at (2*\dx,-1.6) {$b^{(1)}$};
    
    \draw[thick] (0*\dx,0) -- ++(0,-\drop);
    \draw[thick] ($(0*\dx,-\drop)+(-1,0)$) -- ($(3*\dx,-\drop)+(-0.5,0)$);
    
    \draw[thick] (1*\dx,0) -- ++(0,-0.5);
    \draw[thick] (2*\dx,0) -- ++(0,-\drop);

     \draw[red,dashed] ({0.5*\dx},1) -- ({0.5*\dx},-0.8);
     \draw[red,dashed] ({-0.5*\dx},1) -- ({-0.5*\dx},-0.8);
     \draw[red,dashed] ({3.5*\dx},1) -- ({3.5*\dx},-0.8);
\end{tikzpicture}
\]
With the above observation and since $h_j\neq e$, it follows that
\begin{equation*}
\kappa_\pi\big[Z_1,Z_2,\ldots,Z_{3r-2\Lambda}\big]
=
\kappa_{\pi\backslash{U_{h_j}^\ast}}\big[Z_1,Z_2,\ldots,Z_{3r-2\Lambda}\big]
\kappa_1\big(U_{h_j}^\ast\big)
=0.
\end{equation*}
Assume that whenever there are between $1$ and $l$ nested blocks between the elements
\(
b=3i-2\Lambda_i-2\), and \(
b^{(1)}=3j-2\Lambda_j-1,
\)
we have
\begin{equation*}
\kappa_\pi\big[Z_1,Z_2,\ldots,Z_{3r-2\Lambda}\big]=0.
\end{equation*}
To prove the case in which there are $l+1$ nestings blocks, observe that there are two possibilities. Case (2.i): There exists an element $X_e$ between $Z_b$ and $Z_{b^{(1)}}$. Case (2.ii): There is no element $X_e$ between $Z_b$ and $Z_{b^{(1)}}$. In both cases we are going to prove that $b^{(1)}=b+1.$
\begin{itemize}[leftmargin=*]
    \item \textbf{Case (2.i)} Suppose there exists an element \(X_e\) between \(Z_b\) and \(Z_{b^{(1)}}\), which we denote by \(Z_{b^{(2)}} := X_e\), and necessarily satisfies
\(
b < b^{(2)} < b^{(1)}.
\)
The element \(b^{(2)}\) cannot be a singleton; otherwise, this would contradict the condition \(\pi \vee \sigma = 1_{3r-2s}\). Without loss of generality, let \(b^{(3)}\) denote the next element to the right of \(b^{(2)}\) belonging to the same block. This element may either satisfy \(b^{(3)} = b^{(2)} + 1\) or \(b^{(3)} > b^{(2)} + 1\).

Assume that \(b^{(3)} > b^{(2)} + 1\). Then it must hold that \(Z_{b^{(3)}} \neq X_e\), since otherwise we would again contradict the condition \(\pi \vee \sigma = 1_{3r-2s}\). Therefore, applying the induction hypothesis to the elements lying between \(b^{(2)}\) and \(b^{(3)}\), we conclude that
\(
\kappa_\pi\big[Z_1,Z_2,\ldots,Z_{3r-2}\big]=0.
\)
This proves that \(b^{(3)} = b^{(2)} + 1\), and hence
\(
Z_{b^{(3)}} = Z_{b^{(2)}+1} = X_e.
\)

The elements \(b^{(2)}\) and \(b^{(3)}\) cannot be the only elements in their block, for otherwise the condition \(\pi \vee \sigma = 1_{3r-2s}\) would be violated. Thus, there exists another element in the same block. Applying the same argument as before, we conclude that this new element \(b^{(4)}\) satisfies
\(
Z_{b^{(4)}} = Z_{b^{(2)}+2} = X_e.
\)
Repeating this argument inductively, we conclude that between \(b\) and \(b^{(1)}\) there is an \(X\)-block given by
\(
\{b,b+1,b+2,\ldots,b^{(1)}-1\},
\)
corresponding to the elements
\(
\{X_e,X_e,X_e,\ldots,X_e\}.
\)
Thus, the situation reduces exactly to the one illustrated in the following diagram.

\[
\begin{tikzpicture}[scale=0.8]
    \def\dx{1.5}
    \def\drop{0.8}
    \def\bigdrop{1.3}

    \foreach \k in {0,1,2,4,5,6,7,8}
        \fill (\k*\dx,0) circle (1.5pt);

    \node[above=3pt] at (0*\dx,0) {$X_{e}$};

    \node[above=3pt] at (1*\dx,0) {$X_e$};
    \node[above=3pt] at (2*\dx,0) {$X_e$};

    \node at (3*\dx,0.15) {$\cdots$};

    \node[above=3pt] at (4*\dx,0) {$X_e$};
    \node[above=3pt] at (5*\dx,0) {$X_e$};

    \node[above=3pt] at (6*\dx,0) {$U_{h_j}^\ast$};
    \node[above=3pt] at (7*\dx,0) {$\widetilde X_{h_j}$};
    \node[above=3pt] at (8*\dx,0) {$U_{h_j}^\ast$};

    \foreach \k in {1,2,4,5}
        \draw[thick] (\k*\dx,0) -- ++(0,-\drop);
    \draw[thick] (1*\dx,-\drop) -- (5*\dx,-\drop);

    \draw[thick] (0*\dx,0) -- ++(0,-\bigdrop);
    \draw[thick] (7*\dx,0) -- ++(0,-\bigdrop);
    \draw[thick] ($(0*\dx,-\bigdrop)+(-1,0)$) -- ($(8*\dx,-\bigdrop)+(-0.5,0)$);
    \draw[thick] (6*\dx,0) -- ++(0,-\drop);

    \draw[red,dashed] ({-0.5*\dx},1.3) -- ({-0.5*\dx},-1.3);
    \draw[red,dashed] ({1.5*\dx},1.3) -- ({1.5*\dx},-1.3);
\draw[red,dashed] ({0.5*\dx},1.3) -- ({0.5*\dx},-1.3);
\draw[red,dashed] ({2.5*\dx},1.3) -- ({2.5*\dx},-1.3);
\draw[red,dashed] ({4.5*\dx},1.3) -- ({4.5*\dx},-1.3);
\draw[red,dashed] ({5.5*\dx},1.3) -- ({5.5*\dx},-1.3);
\draw[red,dashed] ({3.5*\dx},1.3) -- ({3.5*\dx},-1.3);
\draw[red,dashed] ({8.5*\dx},1.3) -- ({8.5*\dx},-1.3);

\end{tikzpicture}
\]
Thus, since \(h_j \neq e\), we have
\(
\kappa_1(U_{h_j}^{\ast}) = 0,
\)
and therefore
\(
\kappa_\pi\big[Z_1,Z_2,\ldots,Z_{3r-2s}\big]=0.
\)
This proves that \(b^{(1)} = b+1\).

    \item \textbf{Case (2.ii)} Suppose that there is no element \(X_e\) between \(Z_b\) and \(Z_{b^{(1)}}\). Observe that the element \(3(i+1)-2s_{i+1}\) must be connected to some element \(3k-2s_k\); otherwise, this would contradict the condition \(\pi\vee\sigma=1_{3r-2s}\).
\[
\begin{tikzpicture}[scale=0.65]
    \def\dx{1.5}
    \def\drop{1}

    \foreach \k in {0,1,2,3,5,6,7,9,10,11,12,13}
        \fill (\k*\dx,0) circle (1.5pt);

    \node[above=3pt] at (0*\dx,0) {$X_{e}$};
    \node[above=3pt] at (1*\dx,0) {$U_{h_{i+1}}^*$};
    \node[above=3pt] at (2*\dx,0) {$\widetilde{X}_{h_{i+1}}$};
    \node[above=3pt] at (3*\dx,0) {$U_{h_{i+1}}^*$};

    \node at (4*\dx,0.1) {$\cdots$};

    \node[above=3pt] at (5*\dx,0) {$U_{h_{k}}^*$};
    \node[above=3pt] at (6*\dx,0) {$\widetilde{X}_{h_{k}}$};
    \node[above=3pt] at (7*\dx,0) {$U_{h_{k}}^*$};

    \node at (8*\dx,0.1) {$\cdots$};
    \node[above=3pt] at (9*\dx,0) {$U_{h_{j-1}}^*$};
    \node[above=3pt] at (10*\dx,0) {$\widetilde{X}_{h_{j-1}}$};
    \node[above=3pt] at (11*\dx,0) {$U_{h_{j-1}}^*$};
\node[above=3pt] at (12*\dx,0) {$U_{h_{j}}^*$};

    \node[above=3pt] at (13*\dx,0) {$\widetilde X_{h_j}$};

    \draw[thick] (0*\dx,0) -- ++(0,-\drop);
    \draw[thick] (13*\dx,0) -- ++(0,-\drop);
    \draw[thick] (0*\dx,-\drop) -- (13*\dx,-\drop);
    \draw[thick] (13*\dx,-\drop) -- ++(1,0); 

    \draw[thick] (3*\dx,0) -- ++(0,-0.7);
    \draw[thick] (7*\dx,0) -- ++(0,-0.7);
    \draw[thick] (3*\dx,-0.7) -- (7*\dx,-0.7);
    \draw[thick] (7*\dx,-0.7) -- ++(1,0); 
\draw[red,dashed] ({-0.5*\dx},1.3) -- ({-0.5*\dx},-0.8);
    \draw[red,dashed] ({0.5*\dx},1.3) -- ({0.5*\dx},-0.8);
    \draw[red,dashed] ({3.5*\dx},1.3) -- ({3.5*\dx},-0.8);
    \draw[red,dashed] ({4.5*\dx},1.3) -- ({4.5*\dx},-0.8);
    \draw[red,dashed] ({7.5*\dx},1.3) -- ({7.5*\dx},-0.8);
    \draw[red,dashed] ({8.5*\dx},1.3) -- ({8.5*\dx},-0.8);
    \draw[red,dashed] ({11.5*\dx},1.3) -- ({11.5*\dx},-0.8);

\end{tikzpicture}
\]
From this point on, we are in exactly the same situation as in Case 1). By an analogous argument, we can conclude that
\begin{equation*}
    3p-s_p\sim_\pi 3(p+1)-s_{p+1}-2,\qquad \text{for every }p=i+1,i+2,\ldots,j-1.
\end{equation*}
Moreover, by Lemma \ref{lema-2-dem}, each $X$-block consists of consecutive elements of the form $X_h$ and $X_{h^{-1}}$. Finally, consider the sequence of $U$-elements lying between the elements $b$ and $b'$, namely,
$$
U_{h_{i+1}}^\ast,U_{h_{i+1}}^\ast,U_{h_{i+2}}^\ast,U_{h_{i+2}}^\ast,\ldots,U_{h_k}^\ast,U_{h_k}^\ast,\ldots,U_{h_{j-1}}^\ast,U_{h_{j-1}}^\ast,U_{h_j}^\ast.
$$
We multiply the indices according to the nesting order of the partition $\pi$ restricted to the $U$-elements in the above sequence. Carrying out this multiplication yields $h_j=e$, which is a contradiction. Therefore, it follows that $b^{(1)}=b+1$.

\end{itemize}
Since in both cases, Case (2.i) and Case (2.ii), it was proved that $b^{(1)}=b+1$, and hence $Z_{b^{(1)}}=X_e$. The elements $b$ and $b^{(1)}$ cannot be the only elements in their block, for otherwise this would contradict the condition $\pi\vee\sigma=1_{3r-2s}$. Therefore, there exists another element in the same block, and by an inductive argument we conclude that $\pi$ contains only one block of the form
$$
\big\{3(1)-2s_1-2,\,3(2)-2s_2-2,\,3(3)-2s_3-2,\ldots,3(r)-2s_r-2\big\},
$$
corresponding to the elements $\{X_e,X_e,X_e,\ldots,X_e\}$. However, this contradicts the assumption that the elements $h_1,h_2,\ldots,h_r$ belong to the same equivalence class $\{e\}$ or $\{g,g^{-1}\}$ for some $g\in G\setminus\{e\}$. Therefore, we conclude that
\begin{equation*}
    \kappa_r\big(Y_{h_1},Y_{h_2},\ldots,Y_{h_r}\big)=0.
\end{equation*}

\end{proof}
\begin{remark}
It is important to emphasize that two groups of the same order generally induce different left $G$-circulant decompositions of a given random matrix, For example lets consider $G_1=S_3$, and let $A\in M_{6\times 6}(\mathcal{A})$ be free over $M_{6\times 6}(\mathbb{C})$. Writing
\(
A=(A_{ij})_{1\leq i,j\leq 6},
\)
with $A_{ij}\in\mathcal{A}$, if we take the left $G_1$-circulant decomposition from $A^t$ its given by

\[
\scalebox{0.8}{$
\begin{aligned}
    {A^t}=&\begin{bmatrix}
        A_{1,1} & \cdot & \cdot & \cdot & \cdot & \cdot\\
        \cdot & A_{2,2} & \cdot & \cdot & \cdot & \cdot\\ 
        \cdot & \cdot & A_{3,3} & \cdot & \cdot & \cdot\\ 
        \cdot & \cdot & \cdot & A_{4,4} & \cdot & \cdot\\ 
        \cdot & \cdot & \cdot & \cdot & A_{5,5} & \cdot\\ 
        \cdot & \cdot & \cdot & \cdot & \cdot & A_{6,6}\\ 
    \end{bmatrix}
    +
    \begin{bmatrix}
        \cdot & A_{2,1} & \cdot & \cdot & \cdot & \cdot\\
        A_{1,2} & \cdot & \cdot & \cdot & \cdot & \cdot\\ 
        \cdot & \cdot & \cdot & \cdot & A_{5,3} & \cdot\\ 
        \cdot & \cdot & \cdot & \cdot & \cdot & A_{6,4}\\ 
        \cdot & \cdot & A_{3,5} & \cdot & \cdot & \cdot\\ 
        \cdot & \cdot & \cdot & A_{4,6} & \cdot & \cdot\\ 
    \end{bmatrix}
    +
    \begin{bmatrix}
\cdot & \cdot & A_{3,1} & \cdot & \cdot & \cdot \\
\cdot & \cdot & \cdot & \cdot & \cdot & A_{6,2} \\
A_{1,3} & \cdot & \cdot & \cdot & \cdot & \cdot \\
\cdot & \cdot & \cdot & \cdot & A_{5,4} & \cdot \\
\cdot & \cdot & \cdot & A_{4,5} & \cdot & \cdot \\
\cdot & A_{2,6} & \cdot & \cdot & \cdot & \cdot
\end{bmatrix}
\\
&+
\begin{bmatrix}
\cdot & \cdot & \cdot & A_{4,1} & \cdot & \cdot \\
\cdot & \cdot & \cdot & \cdot & A_{5,2} & \cdot \\
\cdot & \cdot & \cdot & \cdot & \cdot & A_{6,3} \\
A_{1,4} & \cdot & \cdot & \cdot & \cdot & \cdot \\
\cdot & A_{2,5} & \cdot & \cdot & \cdot & \cdot \\
\cdot & \cdot & A_{3,6} & \cdot & \cdot & \cdot
\end{bmatrix}
+
\begin{bmatrix}
\cdot & \cdot & \cdot & \cdot & \cdot & A_{6,1} \\
\cdot & \cdot & A_{3,2} & \cdot & \cdot & \cdot \\
\cdot & \cdot & \cdot & A_{4,3} & \cdot & \cdot \\
\cdot & A_{2,4} & \cdot & \cdot & \cdot & \cdot \\
A_{1,5} & \cdot & \cdot & \cdot & \cdot & \cdot \\
\cdot & \cdot & \cdot & \cdot & A_{5,6} & \cdot
\end{bmatrix}
+
\begin{bmatrix}
\cdot & \cdot & \cdot & \cdot & A_{5,1} & \cdot \\
\cdot & \cdot & \cdot & A_{4,2} & \cdot & \cdot \\
\cdot & A_{2,3} & \cdot & \cdot & \cdot & \cdot \\
\cdot & \cdot & A_{3,4} & \cdot & \cdot & \cdot \\
\cdot & \cdot & \cdot & \cdot & \cdot & A_{6,5} \\
A_{1,6} & \cdot & \cdot & \cdot & \cdot & \cdot
\end{bmatrix}\\
=&A_0+A_1+A_2+A_3+A_4+A_5\\
=&A_0+A_1+A_2+A_3+\big(A_4+A_5\big).
\end{aligned}
$}
\]
In the other hand, consider $G_2=\mathbb{Z}_6$, if we take now the left $G_2$-circulant decomposition from $A^t$ which is given by
\[
\scalebox{0.8}{$
\begin{aligned}
     A^t=&\begin{bmatrix}
        A_{1,1} & \cdot & \cdot & \cdot & \cdot & \cdot\\
        \cdot & A_{2,2} & \cdot & \cdot & \cdot & \cdot\\ 
        \cdot & \cdot & A_{3,3} & \cdot & \cdot & \cdot\\ 
        \cdot & \cdot & \cdot & A_{4,4} & \cdot & \cdot\\ 
        \cdot & \cdot & \cdot & \cdot & A_{5,5} & \cdot\\ 
        \cdot & \cdot & \cdot & \cdot & \cdot & A_{6,6}\\ 
\end{bmatrix}
+
\begin{bmatrix}
        \cdot & A_{2,1} & \cdot & \cdot & \cdot & \cdot\\
        \cdot & \cdot & A_{3,2} & \cdot & \cdot & \cdot\\ 
        \cdot & \cdot & \cdot & A_{4,3} & \cdot & \cdot\\ 
        \cdot & \cdot & \cdot & \cdot & A_{5,4} & \cdot\\ 
        \cdot & \cdot & \cdot & \cdot & \cdot & A_{6,5}\\ 
        A_{1,6} & \cdot & \cdot & \cdot & \cdot & \cdot\\ 
\end{bmatrix}
+
\begin{bmatrix}
\cdot & \cdot & A_{3,1} & \cdot & \cdot & \cdot \\
\cdot & \cdot & \cdot & A_{4,2} & \cdot & \cdot \\
\cdot & \cdot & \cdot & \cdot & A_{5,3} & \cdot \\
\cdot & \cdot & \cdot & \cdot & \cdot & A_{6,4} \\
A_{5,1} & \cdot & \cdot & \cdot & \cdot & \cdot \\
\cdot & A_{2,6} & \cdot & \cdot & \cdot & \cdot
\end{bmatrix}
\\
&+
\begin{bmatrix}
\cdot & \cdot & \cdot & A_{4,1} & \cdot & \cdot \\
\cdot & \cdot & \cdot & \cdot & A_{5,2} & \cdot \\
\cdot & \cdot & \cdot & \cdot & \cdot & A_{6,3} \\
A_{1,4} & \cdot & \cdot & \cdot & \cdot & \cdot \\
\cdot & A_{2,5} & \cdot & \cdot & \cdot & \cdot \\
\cdot & \cdot & A_{3,6} & \cdot & \cdot & \cdot
\end{bmatrix}
+
\begin{bmatrix}
\cdot & \cdot & \cdot & \cdot & A_{5,1} & \cdot \\
\cdot & \cdot & \cdot & \cdot & \cdot & A_{6,2} \\
A_{1,3} & \cdot & \cdot & \cdot & \cdot & \cdot \\
\cdot & A_{2,4} & \cdot & \cdot & \cdot & \cdot \\
\cdot & \cdot & A_{3,5} & \cdot & \cdot & \cdot \\
\cdot & \cdot & \cdot & A_{4,6} & \cdot & \cdot
\end{bmatrix}
+
\begin{bmatrix}
\cdot & \cdot & \cdot & \cdot & \cdot & A_{6,1} \\
A_{1,2} & \cdot & \cdot & \cdot & \cdot & \cdot \\
\cdot & A_{2,3} & \cdot & \cdot & \cdot & \cdot \\
\cdot & \cdot & A_{3,4} & \cdot & \cdot & \cdot \\
\cdot & \cdot & \cdot & A_{4,5} & \cdot & \cdot \\
\cdot & \cdot & \cdot & \cdot & A_{5,6} & \cdot
\end{bmatrix}\\ 
=&B_0+B_1+B_2+B_3+B_4+B_5\\
=&B_0+\big(B_1+B_5\big)+\big( B_2+B_4\big)+B_3.
\end{aligned}
$}
\]
By the Theorem \ref{main-thm} we conclude that $\big\{B_0,\ \big(B_1+B_5\big),\ \big( B_2+B_4\big), \ B_3\big\}$ and $\big\{A_0, \ A_1,\ A_2,\ A_3,\ \big(A_4+A_5\big)\big\}$ are freely independent families of random matrices.
\end{remark}\label{main-remark}
In general, let \(G\) be a finite group, let \(A\in M_{|G|}(\mathcal{A})\) be free from \(M_{|G|}(\mathbb{C})\), and consider the left \(G\)-circulant decomposition of \(A^t\),
\[
A^t=\sum_{g\in G}Y_g,
\]
where
\(
Y_g=U_g^*\odot A^t.
\) Let
\(
S:=\{g\in G:\ g=g^{-1}\neq e\}
\)
denote the set of nontrivial involutions of \(G\), and let \(R\subseteq G\) be a complete set of representatives of the inverse pairs
\(\{g,g^{-1}\}\) with \(g\neq g^{-1}\). Then
\begin{equation}
    \label{left-G-circ-decomposition-free}
A^t
=
Y_e
+\sum_{g\in S}Y_g
+\sum_{g\in R}\left(Y_g+Y_{g^{-1}}\right),
\end{equation}
and, by Theorem~\ref{main-thm}, the family of summands is free. Consequently, the asymptotic eigenvalue distribution of \(A^t\) is given by
\[
\mu_{A^t}
=
\mu_{Y_e}
\mathop{\mathlarger{\mathlarger{\boxplus}}}\limits_{g\in S}
\mu_{Y_g}
\mathop{\mathlarger{\mathlarger{\boxplus}}}\limits_{g\in R}
\mu_{Y_g+Y_{g^{-1}}}.
\]
Therefore, the limiting spectral distribution of the partial transpose is determined by the distributions of the identity component, $\mu_{Y_e}$, the components indexed by nontrivial involutions, $\mu_{Y_g}$ and the sums corresponding to inverse pairs, $\mu_{Y_g+Y_{g^{-1}}}$.

\subsection{Distributions from $Y_e$, $Y_g$ for $g\in S$ and $Y_g+Y_{g^{-1}}$ for $g\in R$}
In this subsection, we will study the distributions of $Y_e$, $Y_g$ for $g\in S$ and $Y_g+Y_{g^{-1}}$ for $g\in R$ via free cumulants and moments.

Recall that the limiting spectral distribution of $X\in M_{|G|}(\mathcal{A})$ is denoted by $\mu$. We will show that the distributions of these matrices can be expressed in terms of $\mu$, as stated in the following propositions.

\begin{prop}\label{prop: Y_e}
The limiting spectral distribution from $Y_e$ is $\displaystyle\mu_{Y_e}=\big(D_{\frac{1}{|G|}}\mu\big)^{\boxplus |G|}$.
\end{prop}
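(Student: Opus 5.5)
Since $Y_e=\EE(X^t)=\EE(X)=\sum_{h\in G}E_{h,h}XE_{h,h}$ is the diagonal part of $X$ (the transpose does not move diagonal entries), the plan is to show that the $|G|$ diagonal entries $E_{h,h}XE_{h,h}$, $h\in G$, are free, each with distribution $D_{1/|G|}\mu$ in the normalized trace $\tau$. This is the standard compression picture. With $n:=|G|$, the projections $p_h:=E_{h,h}$ have trace $1/n$, and $X$ is free from $M_{n}(\mathbb C)$. By Nica--Speicher compression (\cite{nica2006lectures}, Lect.~14), the elements $p_hXp_h$, viewed in the compressed spaces $(p_h\mathcal A p_h, n\tau)$, have free cumulants $\kappa^{(p_h)}_m = n\,\kappa_m(X/n,\dots)$, i.e.\ distribution $\mu^{\boxplus 1/n}$ dilated by $n$. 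I would avoid importing that theorem wholesale, and instead compute directly with the cumulant techniques already set up in the paper.

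\textbf{Step 1 (freeness of the diagonal pieces).} Write $Y_e=\sum_h D_h$ with $D_h:=E_{h,h}XE_{h,h}=X_{h}^{\mathrm{diag}}$. Each mixed cumulant $\kappa_r(D_{h_1},\dots,D_{h_r})$ is expanded via the product-as-arguments formula into $\sum_{\pi\vee\sigma=1}\kappa_\pi[E_{h_1},X,E_{h_1},\dots]$. This is the same mechanism as in the proof of Lemma~\ref{lema ind X y U}. Freeness of $X$ from $M_n(\mathbb C)$ kills blocks mixing $X$ and $E$'s, so the sum factors as $\sum_{\pi_X}\kappa_{\pi_X}[X,\dots,X]$ times $E$-cumulants on the Kreweras-type complement. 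Equivalently, and more cleanly, I would apply Theorem~\ref{thm: as ind bs} to compute moments
\[
\tau(D_{h_1}\cdots D_{h_r})=\sum_{\pi\in\mathcal{NC}(r)}\kappa_\pi[X,\dots,X]\,\tau_{Kr(\pi)}[E_{h_1}E_{h_2},E_{h_2}E_{h_3},\dots,E_{h_r}E_{h_1}].
\]
Here each block of $Kr(\pi)$ produces a product of projections that vanishes unless all the $h$'s involved coincide, and otherwise equals $1/n$. Because $Kr(\pi)\vee\pi=1$ for connected words, the whole moment vanishes unless $h_1=\dots=h_r$. This gives $D_{h}D_{h'}=0$ for $h\neq h'$, and $\tau$ factorizes. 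That alone is not freeness, so instead I would verify that the mixed cumulants vanish, using Lemma~\ref{lemma-ind-libre-conjuntos} and the same collapse of the $E$'s. After the delta constraints, one free index remains in each connecting $E$-block, and the relevant cumulant does not vanish only when all $h_j$ are equal. \emph{This is the step I expect to be the main obstacle}: orthogonal projections summing to $1$ do \emph{not} give freeness over $\mathbb C$ in general. Hence I suspect the intended meaning is rather: each $D_h$ has distribution $D_{1/n}\mu$ in a suitable sense, and $Y_e$ has the distribution obtained by the compression formula. I would therefore fall back on the direct cumulant computation in Step 2.

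\textbf{Step 2 (cumulants of $Y_e$).} Compute $\kappa_r(Y_e,\dots,Y_e)=\sum_{h_1,\dots,h_r}\kappa_r(D_{h_1},\dots,D_{h_r})$ via the formula above. The only surviving terms have $h_1=\dots=h_r=h$, and for each $h$ the contribution is $\kappa_r(X,\dots,X)\cdot\tau(E_{h,h})^{r-1}\cdot$(normalization), giving $n\cdot n^{-r}\kappa_r(X)$. This is exactly the $r$-th cumulant of $(D_{1/n}\mu)^{\boxplus n}$, since $\kappa_r(D_{1/n}\mu)=n^{-r}\kappa_r(\mu)$. I would check the power of $n$ carefully on $r=1,2$ (mean $\tau(X)$ and variance $\kappa_2/n$), and then conclude by uniqueness of the distribution from its free cumulants.
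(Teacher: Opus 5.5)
Your Step 2 rests on a false claim. In $\kappa_r(Y_e,\dots,Y_e)=\sum_{h_1,\dots,h_r}\kappa_r(D_{h_1},\dots,D_{h_r})$, the terms where the $h_j$ are not all equal do \emph{not} vanish in general. The diagonal terms are also not $n^{-r}\kappa_r(X)$.

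Already for $r=2$ the claim fails. Since $D_hD_{h'}=0$ and $\tau(D_h)=\tau(XE_{h,h})=\tau(X)/n$, you get $\kappa_2(D_h,D_{h'})=-\tau(X)^2/n^2\neq 0$ for $h\neq h'$. On the diagonal, $\kappa_2(D_h,D_h)=\kappa_2(X)/n^2+\tau(X)^2(1/n-1/n^2)$. The two errors cancel in the total. That is why your proposed sanity check $\kappa_2(Y_e)=\kappa_2(X)/n$ would pass even though the argument is wrong.

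The confusion is between moments and cumulants. The relation $D_hD_{h'}=0$ kills mixed \emph{moments} $\tau(D_{h_1}\cdots D_{h_r})$. Mixed cumulants, however, also contain products of lower, non-vanishing moments such as $\tau(D_h)\tau(D_{h'})$. This is the same reason the $D_h$ cannot be free, as you suspected in Step 1. So Step 2 does not escape that obstacle; it silently re-assumes it.

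The fix, which is exactly the paper's proof, is to stay at the level of moments, where your Step 1 observation is correct. Since $Y_e^m=\sum_h D_h^m$, traciality and Theorem~\ref{thm: as ind bs} give $\tau(Y_e^m)=\sum_h\sum_{\pi\in\mathcal{NC}(m)}\kappa_\pi[X,\dots,X]\,\tau_{Kr(\pi)}[E_{h,h},\dots,E_{h,h}]$. Because $E_{h,h}$ is a projection of trace $1/n$, the Kreweras factor is $n^{-|Kr(\pi)|}=n^{|\pi|-m-1}$, independent of $h$. Summing over $h$ yields $\tau(Y_e^m)=\sum_{\pi}\prod_{V\in\pi}n^{1-|V|}\kappa_{|V|}(X)$. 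Hence $\kappa_m(Y_e)=n^{1-m}\kappa_m(X)$, which are the cumulants of $(D_{1/n}\mu)^{\boxplus n}$.

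Equivalently, $\tau(Y_e^m)=n\,\tau(D_h^m)$ says that $\mu_{Y_e}$ is the law of $D_h$ in the compressed space $(E_{h,h}\mathcal{A}E_{h,h},\,n\tau)$. So the compression theorem you mentioned also gives the result directly. Note that this compressed law is $D_{1/n}(\mu^{\boxplus n})=(D_{1/n}\mu)^{\boxplus n}$, not $D_{1/n}\mu$ in any sense. Your cumulant formula $n\kappa_m(X/n)$ is right, but the phrase ``$\mu^{\boxplus 1/n}$ dilated by $n$'' inverts the parameters.
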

\begin{proof}
Recall that $Y_e=X_e=\sum_{h\in G}E_{h,h}XE_{h,h}$. Since $E_{h,g}E_{h^\prime,g^\prime}=E_{h,g^\prime}\delta_{g,h^\prime}$, for every $n\ge1$, it is straightforward to see that
\begin{align*}
    Y_e^{m}=\sum_{h\in G}E_{h,h}XE_{h,h}XE_{h,h}\cdots E_{h,h}XE_{h,h}.
\end{align*}
Now, we apply $\varphi$ to $Y_e^m$ and use that $\varphi$ is tracial, then 
\begin{align*}
    \varphi(Y_e^m)=&\sum_{h\in G}\varphi(E_{h,h}XE_{h,h}XE_{h,h}\cdots E_{h,h}XE_{h,h})=\sum_{h\in G}\varphi(XE_{h,h}XE_{h,h}\cdots E_{h,h}XE_{h,h}),
\end{align*}
since the matrix $X$ is free independent from $M_{|G|}(\mathbb C)$, Theorem \ref{thm: as ind bs} implies
   \begin{align*} \varphi(Y_e^m)=\sum_{h\in G}\sum_{\pi\in \mathcal{NC}(m)}\kappa_\pi[X,\ldots ,X]\varphi_{Kr(\pi)}[E_{h,h},E_{h,h},\ldots,E_{h,h}]
    =&\sum_{h\in G}\sum_{\pi\in \mathcal{NC}(m)}\frac{1}{|G|^{m-|\pi|+1}}\kappa_\pi[X,\ldots ,X]\\
=&|G|\sum_{\pi\in \mathcal{NC}(m)}\frac{1}{|G|^{m-|\pi|+1}}\kappa_\pi[X,\ldots ,X]\\
=& \sum_{\pi\in \mathcal{NC}(m)}\frac{|G|^{|\pi|}}{|G|^{m}}\kappa_\pi[X,\ldots ,X]
\\=& \sum_{\pi\in \mathcal{NC}(m)}|G|^{|\pi|}\kappa_\pi\left[\frac{X}{|G|},\ldots ,\frac{X}{|G|}\right]
\end{align*} 
which proves that the cumulants of $Y_e$ are given by $\kappa_n(Y_e)=|G|\kappa_n(X/|G|)$, and thus the 
limiting spectral distribution from $Y_e$ is $\displaystyle\mu_{Y_e}=\big(D_{\frac{1}{|G|}}\mu\big)^{\boxplus |G|}$.
\end{proof}
To complete the following two propositions in this subsection, we will often use partitions whose blocks have even size. We call these partitions \emph{even}. We will also consider the notation of a partition $\pi$ couples in a cyclic way (c.c.w.), introduced in Lecture 14 from \cite{nica2006lectures}:

\noindent\textbf{Notation}: Let a partition $\pi\in NC(n)$ and an $n$-tuple of double-indices $\big(i(1)j(1),i(2)j(2),\ldots,i(n)j(n)\big)$ be given. Then we say that $\pi$ \emph{couples in a cyclic way} (c.c.w., for short) the indices $\big(i(1)j(1),\ldots,i(n)j(n)\big)$ if we have for each block $\{r_1<r_2<\cdots<r_s\}\in\pi$ that $j(r_k)=i(r_{k+1})$ for all $k=1,\ldots,s$ (where we put $r_{s+1}:=r_1$).
\begin{prop}\label{prop: Y_g}
 For every $g\in S$, $\displaystyle\mu_{Y_g}=\big(D_{\frac{1}{|G|}}\mu\big)^{\boxplus\frac{|G|}{2}}\boxplus\big(D_{-\frac{1}{|G|}}\mu\big)^{\boxplus\frac{|G|}{2}} $.
\end{prop}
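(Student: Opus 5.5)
The plan is to use the matrix-unit description of $Y_g$ and compute its moments with Theorem \ref{thm: as ind bs}, exactly as in the proof of Proposition \ref{prop: Y_e}. The target distribution has free cumulants
\[
\tfrac{|G|}{2}\Big(\kappa_n\big(\tfrac{X}{|G|}\big)+(-1)^n\kappa_n\big(\tfrac{X}{|G|}\big)\Big)=
\begin{cases}|G|^{1-n}\kappa_n(X), & n \text{ even},\\ 0,& n\text{ odd}.\end{cases}
\]
So it suffices to prove
\[
\varphi(Y_g^m)=\sum_{\substack{\pi\in\mathcal{NC}(m)\\ \pi \text{ even}}}|G|^{|\pi|-m}\,\kappa_\pi[X,\ldots,X],
\]
because the moment--cumulant formula then identifies $\kappa_n(Y_g)$ with the displayed cumulants.

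\textbf{Step 1: rewrite $Y_g$.} Since $g=g^{-1}$, the entry of $Y_g=U_g^*\odot X^t$ at position $(h,gh)$ is $x_{gh,h}$. A direct check shows $E_{h,gh}XE_{h,gh}=x_{gh,h}E_{h,gh}$, so
\[
Y_g=\sum_{h\in G}E_{h,gh}\,X\,E_{h,gh}.
\]
This form should also be reconciled with Lemma \ref{lemma Y-UXU}.

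\textbf{Step 2: expand the moment.} Expanding $Y_g^m$ gives a sum over $(h_1,\dots,h_m)\in G^m$ of $\varphi\big(E_{h_1,gh_1}XE_{h_1,gh_1}E_{h_2,gh_2}X\cdots E_{h_m,gh_m}\big)$. By traciality I move the first factor to the end. Each adjacent pair then merges into a single deterministic element $E_{h_k,gh_k}E_{h_{k+1},gh_{k+1}}=\delta_{gh_k,h_{k+1}}E_{h_k,gh_{k+1}}$, indices taken cyclically. This leaves an alternating word $X D_1 X D_2\cdots XD_m$, and Theorem \ref{thm: as ind bs} applies:
\[
\varphi(Y_g^m)=\sum_{\pi\in\mathcal{NC}(m)}\kappa_\pi[X,\ldots,X]\sum_{h_1,\dots,h_m}\varphi_{Kr(\pi)}[D_1,\ldots,D_m].
\]

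\textbf{Step 3 (main obstacle): evaluate the deterministic sum for each fixed $\pi$.} I will use the c.c.w.\ notation. The quantity $\varphi_{Kr(\pi)}[E_{i(1)j(1)},\ldots]$ equals $|G|^{-|Kr(\pi)|}$ exactly when $Kr(\pi)$ couples the double indices in a cyclic way, and $0$ otherwise. Following the computation in Lecture 14 of \cite{nica2006lectures}, I would transfer this to the condition that $\pi$ itself couples the original indices $(h_k,gh_k)$ cyclically, with the transposition reversing each pair.

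Along a block of $\pi$, consecutive constraints then force the index to alternate $h\mapsto gh\mapsto h\mapsto\cdots$. Since $g\neq e$, a block closes up only if it has even length, which kills all non-even $\pi$.

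For even $\pi$, each block leaves one free group element, giving $|G|^{|\pi|}$ choices. The prefactor is $|G|^{-|Kr(\pi)|}=|G|^{|\pi|-m-1}$. So the total contribution is $|G|^{|\pi|-m-1}\cdot|G|^{|\pi|}\cdot(\ldots)$, and I must check carefully that the counting really yields the clean factor $|G|^{|\pi|-m}$. I expect the correct count to be $|G|$ choices times $|G|^{-1}$ normalizations per cycle of $Kr(\pi)$, adjusted by the overall trace normalization. This is where the bookkeeping is delicate, and I would first verify it for $m=2$ and $m=4$.

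\textbf{Fallback.} If the direct count gets messy, I would use a more structural route. Reorder the basis along the right cosets $\{h,gh\}$. Then $Y_g$ becomes block diagonal with $|G|/2$ blocks of size $2\times 2$, each being the $2\times2$ transposed compression of $X$. I would handle these by mimicking Proposition \ref{prop: Y_e} together with the $\mathbb{Z}_2$ case of \cite{arizmendi2021cyclic}.
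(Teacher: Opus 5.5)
Your setup is sound and follows the paper's route. The identity $Y_g=\sum_{h\in G}E_{h,gh}XE_{h,gh}$ is correct. The paper instead uses $g=g^{-1}$ to write $Y_g=U_g^*X_g(U_g^*)^{-1}$ and expands $X_g=\sum_h E_{h,h}XE_{gh,gh}$, which produces the same kind of alternating word. Step 2 is correct. Matching $\varphi(Y_g^m)$ for every $m$ against the target cumulants is a legitimate way to conclude, and it even spares you the paper's appeal to the $R$-diagonality of Theorem \ref{main-thm} to discard odd cumulants.

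The gap is Step 3, which is the whole content of the proof: you leave it open, and the mechanism you sketch is wrong. The deltas you produced in Step 2 already force $h_{k+1}=gh_k$ for all $k$ cyclically, so $h_1=g^mh_1$. Hence $m$ must be even, and for every $\pi$ exactly one summation variable $h=h_1$ survives. The blocks of $\pi$ create no further free indices. Likewise, the alternation $h\mapsto gh\mapsto h$ comes from these deltas, not from the blocks of $\pi$. Your count of $|G|^{|\pi|}$ choices would give $|G|^{2|\pi|-m-1}$. This already fails at $m=4$, $\pi=\{\{1,2\},\{3,4\}\}$, where the true contribution is $|G|^{-2}\kappa_2(X)^2$, not $|G|^{-1}\kappa_2(X)^2$.

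The missing argument is short. Since $g^2=e$, the merged elements are $D_k=E_{h_k,gh_{k+1}}=E_{h_k,h_k}$. So $D_k=E_{h,h}$ for odd $k$ and $E_{gh,gh}$ for even $k$, two orthogonal projections because $g\neq e$. No transfer of the c.c.w.\ condition to $\pi$ is needed: $\varphi_{Kr(\pi)}[D_1,\ldots,D_m]$ equals $|G|^{-|Kr(\pi)|}$ if every block of $Kr(\pi)$ consists of positions of a single parity, and $0$ otherwise. For even $m$ this holds if and only if $\pi$ is even. This is Prop.~3.1 of \cite{arizmendi2012products}, as the paper invokes. It also follows because $r_t$ and $r_{t+1}-1$ lie in a common block of $Kr(\pi)$ whenever $r_t,r_{t+1}$ are cyclically consecutive in a block of $\pi$, so the condition says every cyclic gap of every block of $\pi$ is odd. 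Summing over the single free $h$ gives $|G|\cdot|G|^{-|Kr(\pi)|}=|G|^{|\pi|-m}$, with no extra normalization. That is exactly your target identity.

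Your fallback, block-diagonalizing $Y_g$ along the cosets $\{h,gh\}$, is a genuinely different and workable route, but it needs inputs you do not supply:
\begin{enumerate}
\item each compression $P_cXP_c$ must be free from $P_cM_{|G|}(\mathbb{C})P_c\cong M_2(\mathbb{C})$ in the compressed space, with law $D_{2/|G|}\mu^{\boxplus |G|/2}$;
\item the $2\times2$ computation for the off-diagonal part, since each $2\times 2$ block of $Y_g$ is only the off-diagonal part of the transposed compression, not all of it;
\item the observation that the $|G|/2$ blocks are identically distributed with $\varphi(P_c)=2/|G|$, so $Y_g$ has their common law.
\end{enumerate}
With these, $2^{1-n}(2/|G|)^{n-1}=|G|^{1-n}$ yields the claim.
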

\begin{proof}
Let $m\geq 1$, by the moment-cumulant formula we have that
\begin{equation*}
    \varphi\big((Y_g)^m)=\sum_{\pi\in \mathcal{NC}(m)}\kappa_{\pi}[Y_g,\ldots,Y_g]=\sum_{\pi\in \mathcal{NC}(m)}\prod_{V\in\pi}\kappa_{V}[Y_g,\ldots,Y_g],
\end{equation*}
observe that in the Theorem \ref{main-thm} we proved that $\{Y_g,Y_{g^{-1}}\}$ is an $R$-diagonal pair, it implies that 
\begin{equation*}
    \kappa_{|V|}[Y_g,\ldots,Y_g]=0,
\end{equation*}
unless $|V|$ is even and we have alternated elements $\kappa_{|V|}[Y_g,Y_{g^{-1}},\ldots ,Y_g,Y_{g^{-1}}]$, the alternated elements property is always true because we are assuming $g\in S$. Thus, all the odd moments $\varphi\big((Y_g)^{2m+1}\big)$ are equal to $0$. Then, the only non possible zero moments are 
\begin{equation}
    \label{ec-9}\varphi\big((Y_g)^{2m})=\sum_{\substack{\pi\in\mathcal{NC}(2m)\\ \pi\text{ is even }\\}}\kappa_{\pi}[Y_g,\ldots,Y_g].
\end{equation}
On the other hand, notice that, since $g=g^{-1}$, then
$U^*_g=U^*_{g^{-1}}=(U^*_{g})^{-1}$, and $X_g=X_{g^{-1}}$. This implies that $Y_g=U_g^\ast X_{g^{-1}}U_g^\ast=U_g^\ast X_{g}(U_g^\ast)^{-1}$ and hence, by the traciality of $\varphi$, we get
    \begin{equation*}
        \varphi\big((Y_g)^{2m}\big)=\varphi\big((U_g^\ast X_{g}(U_g^\ast)^{-1})^{2m}\big)=\varphi\big((U_g^\ast)^{2m}X_g^{2m}((U_g^\ast)^{-1})^{2m}\big)=\varphi\big((X_g)^{2m}\big).
    \end{equation*}
    Given that $X_g=\sum_{h\in G}E_{h,h}XE_{gh,gh}$ we have that
    \begin{equation*}
        \big(X_g\big)^{2m}=\sum_{h_1,\ldots, h_{2m}}E_{h_1,h_1}XE_{gh_1,gh_1}E_{h_2,h_2}XE_{gh_2,gh_2}\cdots E_{h_{2m}h_{2m}}XE_{gh_{2m},gh_{2m}}.
    \end{equation*}
    Taking $\varphi$, using cyclicity of the trace to move $E_{h_1,h_1}$ to the end 
    \begin{equation*}
        \varphi\big((Y_g)^{2m}\big)=\sum_{h_1,\ldots, h_{2m}}\varphi\big(XE_{gh_1,gh_1}E_{h_2,h_2}XE_{gh_2,gh_2}\cdots E_{h_{2m}h_{2m}}XE_{gh_{2m},gh_{2m}}E_{h_1,h_1}\big),
    \end{equation*}
    and using that  \begin{equation*}
        E_{gh_k,gh_k}E_{h_{k+1},h_{k+1}}=\delta_{gh_k,h_{k+1}}E_{h_{k+1},h_{k+1}},\qquad \forall \,k=1,2,\ldots,2m,
    \end{equation*}
    we see that the product is nonzero only if $h_{k+1}=gh_k$ for every $k$. Since $g^2=e$, this leaves exactly one parameter, $h_1\in G$, with $h_2=gh_1$, $h_3=g^2h_1=h_1$, and so on, alternating $h_1,h_2,h_1,h_2,\ldots$, with the condition $h_1=gh_{2m}$. Consequently
    \begin{equation*}
        \varphi\big((Y_g)^{2m}\big)=\sum_{h_1\in G}\varphi\big(XE_{h_2,h_2}XE_{h_1,h_1}XE_{h_2,h_2}XE_{h_1,h_1}\cdots XE_{h_2,h_2}XE_{h_1,h_1}\big),\qquad h_2=gh_1.
    \end{equation*}
Since $X$ is free from $M_{|G|}(\mathbb C)$, Theorem \ref{thm: as ind bs} implies that
    \begin{align*}
        \varphi \big((Y_g)^{2m}\big)=\sum_{h_1}\sum_{\pi\in\mathcal{NC}(2m)}\kappa_{\pi}[X,X,\ldots,X]\varphi_{Kr(\pi)}[E_{h_2,h_2},E_{h_1,h_1},E_{h_2,h_2},E_{h_1,h_1},\ldots ,E_{h_{2}h_{2}},E_{h_1,h_1}].
    \end{align*}
    To observe the factor $\varphi_{Kr(\pi)}$ we identify the $2m$-tuple of double indices
    \begin{equation*}
        (\vec{i},\vec j):=\big(i(1)j(1),i(2)j(2),\ldots,i(2m-1)j(2m-1),i(2m)j(2m)\big)=\big(h_2h_2,h_1h_1,\ldots,h_2h_2,h_1h_1\big),
    \end{equation*}
    that is
    \begin{equation*}
        i(2k-1)=j(2k-1)=h_2,\quad i(2k)=j(2k)=h_1,\qquad \forall\, k=1,2,\ldots,2m
    \end{equation*}
    Then
    \begin{align*}
        \varphi_{Kr(\pi)}\big[E_{h_2,h_2},E_{h_1,h_1},\cdots ,E_{h_{2}h_{2}},E_{h_1,h_1}\big]=&\varphi_{Kr(\pi)}\big[E_{i(1)j(1)},E_{i(2)j(2)},\cdots ,E_{i(2m-1)j(2m-1)},E_{i(2m)j(2m)}\big]\\ 
        =&\prod_{\underset{B=\{b_1<\cdots <b_{|B|}\}}{B\in Kr(\pi)}}\varphi\big(E_{i(b_1),j(b_1)}E_{i(b_2),j(b_2)}\cdots E_{i(b_{|B|})j(b_{|B|})}\big)\\
        =&\prod_{\underset{B=\{b_1<\cdots <b_{|B|}\}}{B\in Kr(\pi)}}\frac{1}{|G|}\delta_{\{j(b_1)=i(b_2),\ldots,j(b_{|B|-1})=i(b_{|B|}),j(b_{|B|})=i(b_1)\}}.
    \end{align*}
    It implies that, if $Kr(\pi)$ c.c.w. the indices $(\vec i, \vec j)$,
    \begin{equation*}
        \varphi_{Kr(\pi)}[E_{h_2,h_2},E_{h_1,h_1},E_{h_2,h_2},E_{h_1,h_1}\cdots E_{h_{2}h_{2}},E_{h_1,h_1}]=|G|^{-|Kr(\pi)|},
    \end{equation*}
    and $0$ otherwise. Note that this condition, and its value, do not depend on $h_1$, and using $|Kr(\pi)|+|\pi|=2m+1$, 
    \begin{align*}
        \varphi \big((Y_g)^{2m}\big)=&\sum_{h_1}\sum_{\underset{Kr(\pi)\, c.c.w.\, (\vec i,\vec j)}{\pi\in\mathcal{NC}(2m)}}\kappa_{\pi}[X,\cdots,X]|G|^{-2m-1+|\pi|} 
        =|G|\sum_{\underset{Kr(\pi)\, c.c.w.\, (\vec i,\vec j)}{\pi\in\mathcal{NC}(2m)}}\kappa_{\pi}[X,\cdots,X]|G|^{-2m-1+|\pi|} .
    \end{align*}
    Even more, we have $\kappa_\pi[X,\ldots,X]\,|G|^{-2m}=\kappa_\pi[|G|^{-1}X,\ldots,|G|^{-1}X]$, so 
    \begin{equation*}
    \varphi\big((Y_g)^{2m}\big)=\sum_{\underset{Kr(\pi)\, c.c.w.\, (\vec i,\vec j)}{\pi\in \mathcal{NC}(2m)}}\kappa_\pi\big[|G|^{-1}X,\ldots,|G|^{-1}X\big]\,|G|^{|\pi|}.
    \end{equation*}
    Now, observe that the condition $Kr(\pi)$ c.c.w. $(\vec{i},\vec{j})$ implies that each block of $Kr(\pi)$ contains integers with the same parity modulo $2$. This occurs if and only if every block of $\pi$ has even size (see Proposition 3.1 in \cite{arizmendi2012products}). Therefore,
    \begin{equation*}
        \varphi\big((Y_g)^{2m}\big)=\sum_{\substack{\pi\in\mathcal{NC}(2m)\\ \pi\text{ is even}}}\kappa_\pi\big[|G|^{-1}X,\ldots,|G|^{-1}X\big]\,|G|^{|\pi|}.
    \end{equation*}
    Using the expression above together with \eqref{ec-9}, we obtain that    
    \begin{equation*}
        \kappa_{2m}(Y_g,\ldots,Y_g)=|G|\kappa_{2m}(|G|^{-1}X,\cdots,|G|^{-1}X)=|G|\frac{1}{|G|^{2m}}\kappa_{2m}(X,\ldots,X)=|G|^{1-2m}\kappa_{2m}(X,\ldots,X).
    \end{equation*}
    Finally, for $\nu:=\big(D_{1/|G|}\mu\big)^{\boxplus|G|/2}\boxplus\big(D_{-1/|G|}\mu\big)^{\boxplus|G|/2}$ we have $k_{2m+1}=0$ and
\[
\kappa_{2m}(\nu)=\frac{|G|^{1-{2m}}}2\big(1+(-1)^{2m}\big)\kappa_{2m}(X,\ldots,X)=|G|^{1-2m}\kappa_{2m}(X,\dots,X),
\]
which coincides exactly with the cumulants of $Y_g$ just computed. This proves the statement.
\end{proof}

\begin{prop}\label{prop: Y_g+Y_g-1}
 For every $g\in R$, then $\displaystyle \mu_{Y_g+Y_{g^{-1}}}=\big(D_{\frac{1}{|G|}}\mu\big)^{\boxplus|G|}\boxplus\big(D_{-\frac{1}{|G|}}\mu\big)^{\boxplus|G|}$.
\end{prop}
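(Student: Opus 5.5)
Since Theorem~\ref{main-thm} shows that $\{Y_g,Y_{g^{-1}}\}$ is an $R$-diagonal pair, every mixed cumulant of $Y_g+Y_{g^{-1}}$ that survives has even length and alternates between $Y_g$ and $Y_{g^{-1}}$. Multilinearity therefore gives
\[
\kappa_{2m}(Y_g+Y_{g^{-1}},\ldots,Y_g+Y_{g^{-1}})=\kappa_{2m}(Y_g,Y_{g^{-1}},\ldots,Y_g,Y_{g^{-1}})+\kappa_{2m}(Y_{g^{-1}},Y_g,\ldots,Y_{g^{-1}},Y_g)=2\alpha_m ,
\]
because the state is tracial and so $\alpha_m=\beta_m$. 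All odd cumulants vanish. The target measure $\nu=(D_{1/|G|}\mu)^{\boxplus|G|}\boxplus(D_{-1/|G|}\mu)^{\boxplus|G|}$ has cumulants $|G|^{1-n}(1+(-1)^n)\kappa_n(X)$, so the whole proof reduces to showing
\[
\alpha_m=|G|^{1-2m}\kappa_{2m}(X,\ldots,X).
\]

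To compute $\alpha_m$ I would go through moments of $Y_gY_{g^{-1}}$. By the formula for $R$-diagonal pairs recalled in the preliminaries, $\kappa_n(Y_gY_{g^{-1}},\ldots)=\sum_{\pi\in\mathcal{NC}(n)}\prod_{V}\alpha_{|V|}$, so it is enough to show
\[
\varphi\big((Y_gY_{g^{-1}})^m\big)=\sum_{\pi\in\mathcal{NC}(2m),\ \pi\text{ even}}\kappa_\pi\big[|G|^{-1}X,\ldots\big]\,|G|^{|\pi|}.
\]
This right-hand side is exactly the moment expression obtained for $Y_g$ in Proposition~\ref{prop: Y_g}. By uniqueness of the determining sequences, that identifies $\alpha_m$ with the claimed value, in the same way as in that proposition. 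Equivalently, $\varphi((Y_gY_{g^{-1}})^m)$ should equal $\varphi(Y_s^{2m})$ for an involution component $Y_s$.

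For the moment computation I would use Lemma~\ref{lemma Y-UXU}: $Y_g=U_g^*X_{g^{-1}}U_g^*$ and $Y_{g^{-1}}=U_{g^{-1}}^*X_gU_{g^{-1}}^*$. The middle factors combine as $U_g^*U_{g^{-1}}^*=I$. Cycling with traciality and expanding $X_{g^{-1}}=\sum_h E_{h,h}XE_{g^{-1}h,g^{-1}h}$ and $X_g=\sum_h E_{h,h}XE_{gh,gh}$, the product becomes a word in $X$ and diagonal units $E_{k,k}$ together with conjugations by $U_g^*$. Since $U_g^*E_{h,h}U_g=E_{g^{-1}h,g^{-1}h}$ (up to the convention in Lemma~\ref{lemma Y-UXU}), the conjugations can be absorbed into relabellings. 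Then $E_{a,a}E_{b,b}=\delta_{a,b}E_{a,a}$ forces all summation indices to be determined by a single $h\in G$.

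The outcome should be that the indices alternate between two values $h$ and $h'=\gamma h$ for some fixed $\gamma\neq e$ depending on $g$, giving
\[
\varphi\big((Y_gY_{g^{-1}})^m\big)=\sum_{h\in G}\varphi\big(XE_{h',h'}XE_{h,h}\cdots XE_{h',h'}XE_{h,h}\big).
\]
Here is where I must check that the two labels really differ and alternate; if some configuration made them equal, one would get $Y_e$-type moments instead. From this point the argument of Proposition~\ref{prop: Y_g} applies verbatim: Theorem~\ref{thm: as ind bs}, then the c.c.w.\ condition on $Kr(\pi)$ with alternating parity, which forces $\pi$ to be even by Proposition 3.1 of \cite{arizmendi2012products}, and finally the count $|Kr(\pi)|+|\pi|=2m+1$.

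The main obstacle is the index bookkeeping in the moment computation. The group is possibly noncommutative and $g\neq g^{-1}$, so one has to confirm that the $U^*$ conjugations cancel between consecutive $Y_g,Y_{g^{-1}}$ factors, and that the diagonal projections alternate between exactly two distinct labels rather than cycling through $h,gh,g^2h,\ldots$. If the direct computation becomes messy, a fallback is to compute $\kappa_{2m}(Y_g,Y_{g^{-1}},\ldots)$ directly by the products-as-arguments formula, reusing the partition analysis from Case (1) of the proof of Theorem~\ref{main-thm}. That analysis forces $U$-blocks pairing $3i$ with $3(i+1)-2$, which leaves a single $X$-block with $|G|$ free choices of the diagonal label and produces $|G|\cdot|G|^{-2m}\kappa_{2m}(X)$.
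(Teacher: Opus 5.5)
Your proposal is correct and follows essentially the same route as the paper. You reduce to $2\kappa_{2m}(Y_g,Y_{g^{-1}},\ldots,Y_g,Y_{g^{-1}})$ via $R$-diagonality and traciality, compute $\varphi\big((Y_gY_{g^{-1}})^m\big)$ through the $E_{h,h}$ expansion, Theorem~\ref{thm: as ind bs} and the Kreweras/even-partition criterion, and extract the determining sequence by triangular comparison with the alternating moment--cumulant expansion. The bookkeeping you flagged works out: since $U_g^*U_{g^{-1}}^*=U_{g^{-1}}U_g=I$, traciality gives $\varphi\big((Y_gY_{g^{-1}})^m\big)=\varphi\big((X_{g^{-1}}X_g)^m\big)$ with $X_{g^{-1}}X_g=\sum_{h\in G}E_{h,h}XE_{g^{-1}h,g^{-1}h}XE_{h,h}$, so the projections alternate between $h$ and $g^{-1}h\neq h$, which is a slightly more careful version of the paper's own computation.
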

\begin{proof}
Let $m\geq 1$, similarly from the last proposition, with the observation that in the Theorem \ref{main-thm} we proved that $\{Y_g,Y_g^{-1}\}$ is an $R$-diagonal pair, we have that the odd moments $\varphi\big((Y_g+Y_g^{-1})^{2m+1}\big)$ are equal to zero, and the even moments are
\begin{equation}
    \label{ec-10}\varphi\big((Y_g+Y_g^{-1})^{2m})=\sum_{\substack{\pi\in\mathcal{NC}(2m)\\ \pi\text{ is even }}}\kappa_{\pi}[Y_g+Y_g^{-1},\ldots,Y_g+Y_g^{-1}],
\end{equation}
Now, only the two alternating cumulants in the above sum can be non-zero, so
\begin{equation*}
\kappa_{2m}(Y_g+Y_{g^{-1}},\ldots,Y_g+Y_{g^{-1}})
    =
    \kappa_{2m}(Y_g,Y_{g^{-1}},\ldots,Y_g,Y_{g^{-1}})+
    \kappa_{2m}(Y_{g^{-1}},Y_g,\ldots,Y_{g^{-1}},Y_g).
\end{equation*}
Since $\varphi$ is tracial, free cumulants are cyclically invariant, and
therefore the two terms above are equal. Thus,
\begin{equation}  \label{ec-r-diagonal sum}
    \kappa_{2m}(Y_g+Y_{g^{-1}},\ldots,Y_g+Y_{g^{-1}})
    =
    2\kappa_{2m}(Y_g,Y_{g^{-1}},\ldots,Y_g,Y_{g^{-1}}).
\end{equation}
    Our aim it then to calculate $\kappa_{2m}(Y_g,Y_{g^{-1}},\ldots,Y_g,Y_{g^{-1}})$, which we will do by considering the moment cumulant formula for $(Y_gY_{g^{-1}})^m$. To do this,  first notice that, given $X_g=\sum_{p\in G}E_{p,p}XE_{gp,gp}$ and $Y_g=U_g^\ast X_{g^{-1}}U_g^\ast$ from the Lemma \ref{lemma Y-UXU}, we have that
    \begin{equation*}
        Y_g=\sum_{p\in G}U_g^\ast E_{p,p}XE_{g^{-1}p,g^{-1}p}U_g^\ast=\sum_{p\in G}E_{g^{-1}p,g^{-1}p}XE_{p,p},
    \end{equation*}
    which implies that the product of $Y_gY_{g^{-1}}$ may be written as
\begin{align*}
    Y_gY_{g^{-1}}=&\sum_{p_1,p_2\in G}E_{p_1,p_1}XE_{gp_1,gp_1}E_{p_2,p_2}XE_{g^{-1}p_2,g^{-1}p_2}\\
    =&\sum_{p_1,p_2}E_{p_1,p_1}XE_{gp_1,p_2}XE_{g^{-1}p_2,g^{-1}p_2}\delta_{gp_1=p_2}\\
    =&\sum_{p\in G}E_{p,p}XE_{gp,gp}XE_{p,p}
\end{align*}
Now, taking the $m$ power evaluated in $\varphi$, we have
\begin{align*}
    \varphi\big((Y_gY_{g^{-1}})^{m}\big)=&\sum_{p_1,\ldots,p_{m}\in G}\varphi \Big(E_{p_1,p_1}XE_{gp_1,gp_1}XE_{p_1,p_1}E_{p_2,p_2}XE_{gp_2,gp_2}XE_{p_2,p_2}\cdots E_{p_{m},p_{m}}XE_{gp_{m},gp_{m}}XE_{p_{m},p_{m}}\Big)\\
    =&\sum_{p_1,\ldots,p_{m}\in G}\varphi \Big(XE_{gp_1,gp_1}XE_{p_1,p_1}E_{p_2,p_2}XE_{gp_2,gp_2}XE_{p_2,p_2}\cdots E_{p_{m},p_{m}}XE_{gp_{m},gp_{m}}XE_{p_{m},p_{m}}E_{p_1,p_1}\Big).
\end{align*}
Using that $E_{p_k,p_k}E_{p_{k+1},p_{k+1}}=E_{p_k,p_{k+1}}\delta_{p_k=p_{k+1}}$, we can observe that $\varphi\big((Y_gY_{g^{-1}})^{m}\big)$ is different from zero only if $p_{k+1}=p_k$ for every $k=1,2,\ldots,m$ (where $p_{m+1}:=p_1$). With this condition,
\begin{equation*}
    \varphi\big((Y_gY_{g^{-1}})^{m}\big)=\sum_{p\in G}\varphi \Big(XE_{gp,gp}XE_{p,p}XE_{gp,gp}XE_{p,p}\cdots XE_{gp,gp}XE_{p,p}\Big).
\end{equation*}
Since $X$ is free from $M_{|G|}(\mathbb{C})$, Theorem \ref{thm: as ind bs} implies that
\begin{equation*}
    \varphi\big((Y_gY_{g^{-1}})^{m}\big)=\sum_{p\in G}\sum_{\pi\in \mathcal{NC}(2m)}\kappa_{\pi}[X,\ldots,X]\varphi_{Kr(\pi)}[E_{gp,gp},E_{p,p},\ldots, E_{gp,gp},E_{p,p}],
\end{equation*}
and, exactly as in the proof of Proposition \ref{prop: Y_g}, each block of $Kr(\pi)$ has indices congruent modulo $2$ if and only if every block of $\pi$ has even size (Proposition 3.1 in \cite{arizmendi2012products}), so
\begin{equation}
    \label{ec-12}\varphi\big((Y_gY_{g^{-1}})^{m}\big)=\sum_{\substack{\pi\in \mathcal{NC}(2m)\\ \pi\text{ is even }}}|G|^{|\pi|}\,\kappa_{\pi}[|G|^{-1}X,\ldots,|G|^{-1}X].
\end{equation}
On the other hand, since $(Y_gY_{g^{-1}})^m$ is the ordered product $Y_gY_{g^{-1}}Y_gY_{g^{-1}}\cdots Y_gY_{g^{-1}}$ of $2m$ alternating factors, the moment--cumulant formula applied to this sequence gives
\begin{align}
    \varphi\big((Y_gY_{g^{-1}})^m\big)=&\sum_{\sigma\in\mathcal{NC}(2m)}\kappa_\sigma\big[Y_g,Y_{g^{-1}},\ldots,Y_g,Y_{g^{-1}}\big]\notag\\
=&\sum_{\substack{\sigma\in\mathcal{NC}(2m)\\ \pi\text{ is even}}}\kappa_\sigma\big[Y_g,Y_{g^{-1}},\ldots,Y_g,Y_{g^{-1}}\big],  \label{ec-11}
\end{align}
where the last equation follows since $\{Y_g,Y_{g^{-1}}\}$ is an $R$-diagonal pair, and then every block with non zero contribution must alternate between $Y_g$ and $Y_{g^{-1}}$ and thus must be have even size.
From the equations \eqref{ec-12} and \eqref{ec-11} one sees that 
\begin{equation*}
 \kappa_{2n}(Y_g,Y_{g^{-1}},\ldots,Y_g,Y_{g^{-1}})=|G|\kappa_{2n}(|G|^{-1}X,\ldots,|G|^{-1}X),
\end{equation*}
from where from \label{ec-r-diagonal sum}, one gets
\begin{equation*}
    \kappa_{2m}(Y_g+Y_{g^{-1}},\ldots,Y_g+Y_{g^{-1}})=2|G|\kappa_{2m}(|G|^{-1}X,\ldots,|G|^{-1}X)=2|G|^{1-2m}\kappa_{2m}(X,\ldots,X).
\end{equation*}
Finally, for $\nu:=\big(D_{1/|G|}\mu\big)^{\boxplus|G|}\boxplus\big(D_{-1/|G|}\mu\big)^{\boxplus|G|}$ we have $k_{2m+1}=0$ and
\[
\kappa_{2m}(\nu)=|G|^{1-{2m}}\big(1+(-1)^{2m}\big)\kappa_{2m}(X,\ldots,X)=2|G|^{1-2m}\kappa_{2m}(X,\dots,X),
\]
which coincides exactly with the cumulants of $Y_g+Y_g^{-1}$ just computed. This concludes the proof.
\end{proof}

\begin{cor}[Proposition 7.2 \cite{arizmendi2016asymptotic}]\label{main:corollary-1}
    The partial transpose $A^\Gamma_d=(id_d\otimes t)(A_d)\in M_d(\mathbb{C})\otimes M_{|G|}(\mathbb{C})$, where \(t:M_{|G|}(\mathbb{C})\to M_{|G|}(\mathbb{C})\) denotes the matrix transposition, has asymptotic eigenvalue distribution
\begin{equation*}
    \mu^\Gamma=\big(D_{\frac{1}{|G|}}\mu^{\boxplus |G|(|G|+1)/2}\big)\boxplus \big(D_{-\frac{1}{|G|}}\mu^{\boxplus |G|(|G|-1)/2}\big).
\end{equation*}
\end{cor}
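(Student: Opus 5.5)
The plan is to combine the free decomposition \eqref{left-G-circ-decomposition-free} with Propositions \ref{prop: Y_e}, \ref{prop: Y_g} and \ref{prop: Y_g+Y_g-1}, and then reduce everything to a count of group elements. First I would recall the reduction from the Introduction. The unitarily invariant $A_d$ is asymptotically free from $(I_d\otimes E_{i,j})_{i,j}$ by Voiculescu's theorem, so $A_d^\Gamma$ converges in distribution to $a^t=\sum_{i,j}e_{i,j}ae_{i,j}$. Here $a$ is free from $M_{|G|}(\mathbb C)$ and has distribution $\mu$. This places us in the setting of Theorem \ref{main-thm}, with $X=a$ and the group $G$ of order $m=|G|$ (any group of that order, e.g.\ $\mathbb Z_m$, will do).

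Next I would apply Theorem \ref{main-thm} to write $a^t=Y_e+\sum_{g\in S}Y_g+\sum_{g\in R}(Y_g+Y_{g^{-1}})$ as a free sum. This gives
\[
\mu^\Gamma=\mu_{Y_e}\boxplus\mathop{\boxplus}_{g\in S}\mu_{Y_g}\boxplus\mathop{\boxplus}_{g\in R}\mu_{Y_g+Y_{g^{-1}}}.
\]
The cleanest way to combine these is through free cumulants, since $\kappa_n$ is additive under $\boxplus$ and $\kappa_n(D_c\nu)=c^n\kappa_n(\nu)$. The propositions give $\kappa_n(Y_e)=|G|^{1-n}\kappa_n(\mu)$. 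The odd cumulants of every other summand vanish. For even $n$, $\kappa_n(Y_g)=|G|^{1-n}\kappa_n(\mu)$ when $g\in S$, and $\kappa_n(Y_g+Y_{g^{-1}})=2|G|^{1-n}\kappa_n(\mu)$ when $g\in R$.

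Summing, the odd cumulants equal $|G|^{1-n}\kappa_n(\mu)$. The even cumulants equal $(1+|S|+2|R|)|G|^{1-n}\kappa_n(\mu)=|G|\cdot|G|^{1-n}\kappa_n(\mu)$, because $\{e\}\sqcup S\sqcup\bigcup_{g\in R}\{g,g^{-1}\}=G$. The key counting identity is therefore $1+|S|+2|R|=|G|$.

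Then I would compute the cumulants of the target measure. Write $p=|G|(|G|+1)/2$ and $q=|G|(|G|-1)/2$. The target has $\kappa_n=|G|^{-n}\bigl(p+(-1)^nq\bigr)\kappa_n(\mu)$, and $p+(-1)^nq$ equals $|G|$ for odd $n$ and $|G|^2$ for even $n$. This gives $|G|^{1-n}\kappa_n(\mu)$ in the odd case and $|G|^{2-n}\kappa_n(\mu)$ in the even case, matching the sum above. Since a compactly supported measure is determined by its free cumulants, the identity follows.

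The main obstacle is only bookkeeping. One must check that the free cumulants of the three component types, including their vanishing odd parts, are exactly the ones stated in the propositions. One must also justify that $A_d^\Gamma\to a^t$ in distribution, which uses only the convergence already recalled in the Introduction.
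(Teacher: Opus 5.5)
Your proposal is correct and follows the paper's proof: you combine the free decomposition from Theorem \ref{main-thm} with Propositions \ref{prop: Y_e}, \ref{prop: Y_g} and \ref{prop: Y_g+Y_g-1}, and the only real input is the count $1+|S|+2|R|=|G|$, which the paper uses in the form $r=(|G|-s-1)/2$. The paper adds the exponents of $D_{\pm 1/|G|}\mu$ directly as free convolution powers instead of comparing free cumulants, but this is the same bookkeeping written in different notation.
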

\begin{proof}
    Combining the Propositions \ref{prop: Y_e}, \ref{prop: Y_g}, \ref{prop: Y_g+Y_g-1}, the main Theorem \ref{main-thm},
    we can conclude that
    \begin{align*}
        \mu^{\Gamma}=&\mu_{Y_e}\boxplus\big(\overset{s}{\underset{i=1}{\boxplus}} \mu_{Y_g}\big)\boxplus\left(\overset{r}{\underset{i=1}{\boxplus}}\mu_{Y_g+Y_{g^{-1}}}\right),
    \end{align*}
    where $s:=|S|$ and $r:=|R|=\frac{|G|-s-1}{2}$. Therefore 
    \begin{align*}
        \mu^{\Gamma}=&\big(D_{\frac{1}{|G|}}\mu\big)^{\boxplus |G|}\boxplus \left(\overset{s}{\underset{i=1}{\boxplus}} \big(D_{\frac{1}{|G|}}\mu\big)^{\boxplus\frac{|G|}{2}}\boxplus\big(D_{-\frac{1}{|G|}}\mu\big)^{\boxplus\frac{|G|}{2}}\right)\boxplus \left(\overset{r}{\underset{i=1}{\boxplus}} \big(D_{\frac{1}{|G|}}\mu\big)^{\boxplus|G|}\boxplus\big(D_{-\frac{1}{|G|}}\mu\big)^{\boxplus|G|} \right)\\
        =&\big(D_{\frac{1}{|G|}}\mu\big)^{\boxplus\big( |G|+\frac{|G|s}{2}+\frac{|G|(|G|-s-1)}{2}\big)}\boxplus \big(D_{-\frac{1}{|G|}}\mu\big)^{\boxplus\big( \frac{|G|s}{2}+\frac{|G|(|G|-s-1)}{2}\big)}\\
        =&\big(D_{\frac{1}{|G|}}\mu^{\boxplus |G|(|G|+1)/2}\big)\boxplus \big(D_{-\frac{1}{|G|}}\mu^{\boxplus |G|(|G|-1)/2}\big).
    \end{align*}
\end{proof}

\section{Applications to Wishart Matrices}\label{section-applicacions}

    Let $G$ be a fixed finite group, and consider the Wishart matrix defined in \eqref{ec-1}, with the convention $d_1=|G|$. Explicitly,
\begin{equation}\label{ec-3}
    W=\frac{1}{|G|d_2}
    \begin{bmatrix}
        G_1\\
        \rule{1cm}{0.5pt}\\
        \vdots\\
        \rule{1cm}{0.5pt}\\
        G_{|G|}
    \end{bmatrix}
    \begin{bmatrix}
        G_1^{*}&|&\cdots&|&G_{|G|}^{*}
    \end{bmatrix}
    =
    \frac{1}{|G|d_2}(G_iG_j^{*})_{i,j}
    \in M_{|G|d_2}(\mathbb C),
\end{equation}
where $G_1,\ldots,G_{|G|}$ are independent complex Gaussian random matrices of size $d_2\times p$. Its partial transpose is given by
\(
W^{\Gamma}
=(\mathrm{id}\otimes t)(W)
=\frac{1}{|G|d_2}(G_jG_i^{*})_{i,j}.
\)
Assuming that $|G|$ is fixed, and $d_2,p\to\infty$ in such a way that
\(
\frac{p}{d_2}\to c|G|
\), with $c>0$.
Let $\omega$ denote the limiting distribution of the Wishart matrix \eqref{ec-3}, and write
\[
|G|\omega=
\begin{bmatrix}
\omega_{1,1} & \omega_{1,2} & \cdots & \omega_{1,|G|}\\
\omega_{2,1} & \omega_{2,2} & \cdots & \omega_{2,|G|}\\
\vdots & \vdots & \ddots & \vdots\\
\omega_{|G|,1} & \omega_{|G|,2} & \cdots & \omega_{|G|,|G|}
\end{bmatrix}.
\]
Since the partial transpose exchanges the block indices, it follows that $|G|W^\Gamma$ has limit distribution
\(
|G|\omega^{t}
=
(\omega_{j,i})_{1\le i,j\le |G|}.
\) 
Moreover, $|G|\omega$ has the Marchenko--Pastur distribution of parameter $c$ and is free independent from $M_{|G|}(\mathbb{C})$. This implies that the left $G$-circulant decompositions from $|G|\omega^t$ 
\begin{equation}\label{ec-5}
    |G|\omega^t= A_e+A_{g_1}+\cdots +A_{g_{|G|-1}},
\end{equation}
is a free decomposition. Observe that, for every \(g\in G\),
\(
A_{g^{-1}}=A_g^*.
\)
Indeed, by definition,
\begin{equation*}
            A_g=\sum_{h\in G}A_{h,gh}\,|e_h\rangle\langle e_{gh}|.
        \end{equation*}
Therefore,
\begin{equation*}
            A_{g}^\ast=\Big(\sum_{h\in G}A_{h,gh}\,|e_h\rangle\langle e_{gh}|\Big)^\ast =\sum_{h\in G}\Big(A_{h,gh}\,|e_h\rangle\langle e_{gh}|\Big)^\ast =\sum_{h\in G}A_{gh,h}|e_{gh}\rangle\langle e_{h}|,
        \end{equation*}
Changing the variables \(k=gh\), we obtain $A_g^*= A_{g^{-1}}$.
\begin{cor} The family
    \(
    A_e,\,
    \{A_g\}_{g\in S},\,
    \{A_g,A_{g^{-1}}\}_{g\in R}
    \)
    is $\ast$-free.  
\end{cor}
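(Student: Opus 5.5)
The plan is to read the corollary as Theorem \ref{main-thm} applied to $|G|\omega$, with $\ast$-freeness coming from the adjoint relation $A_{g^{-1}}=A_g^\ast$.

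\textbf{Step 1: set up the theorem.} The theorem needs an element $X\in M_{|G|}(\mathcal A)$ that is free from $M_{|G|}(\mathbb C)$. Take $X=|G|\omega$. Then $X^t=|G|\omega^t$, and the components in \eqref{ec-5} are exactly the $Y_g$ of Theorem \ref{main-thm}. The theorem then gives that
\[
A_e,\qquad \{A_g\}_{g\in S},\qquad \{A_g,A_{g^{-1}}\}_{g\in R}
\]
is a free family over $\mathbb C$. For $g\in S$ we have $g=g^{-1}$, so the pair $\{A_g,A_{g^{-1}}\}$ collapses to the single element $A_g$.

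\textbf{Step 2: upgrade freeness to $\ast$-freeness.} A family is $\ast$-free when the unital $\ast$-algebras generated by its members are free. So we must check that each member's generated unital algebra is already closed under the adjoint.
\begin{itemize}
\item For $A_e$: it equals $\EE(|G|\omega^t)$, a diagonal matrix whose entries $\omega_{h,h}$ are self-adjoint, since $\omega$ is self-adjoint. Hence $A_e^\ast=A_e$. This also follows from $A_{e^{-1}}=A_e^\ast$.
\item For $g\in S$: the relation $A_g^\ast=A_{g^{-1}}=A_g$ shows $A_g$ is self-adjoint.
\item For $g\in R$: $A_g^\ast=A_{g^{-1}}$, so $\mathrm{alg}\langle 1,A_g,A_{g^{-1}}\rangle=\mathrm{alg}\langle 1,A_g,A_g^\ast\rangle$.
\end{itemize}
In all three cases the unital algebra equals the unital $\ast$-algebra, so the freeness of the algebras from Step 1 is exactly $\ast$-freeness.

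\textbf{Step 3: check the adjoint identity with the paper's conventions.} This is the only subtle point. The adjoint computation in the text uses the entries of $|G|\omega^t$, namely $(\omega_{j,i})$. Self-adjointness of $\omega$ gives $\omega_{i,j}^\ast=\omega_{j,i}$, so the entry-wise adjoint of $|G|\omega^t$ is again $|G|\omega^t$. Combined with the index change $k=gh$ from the paper, this yields $A_g^\ast=A_{g^{-1}}$.

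I would also note the analytic convention in force. The limiting object is taken with respect to the tracial state $\tau$ on the $\ast$-probability space. The freeness supplied by Theorem \ref{main-thm} is a statement about unital algebras over $\mathbb C$, so no further cumulant computation is needed. As a consequence, the determining sequences of each $R$-diagonal element $A_g$, $g\in R$, are those computed in Proposition \ref{prop: Y_g+Y_g-1}.
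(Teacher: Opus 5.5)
Your proposal is correct and follows the paper's own argument: apply Theorem \ref{main-thm} to $X=|G|\omega$, which is free from $M_{|G|}(\mathbb C)$, and then use $A_{g^{-1}}=A_g^\ast$ to see that each generated unital algebra is already $\ast$-closed, so the freeness given by the theorem is $\ast$-freeness. You also spell out two points the paper leaves implicit, and both are right: $|G|\omega^t$ is self-adjoint, which justifies $A_{h,gh}^\ast=A_{gh,h}$ in the adjoint computation, and $\mathrm{alg}\langle 1,A_g,A_{g^{-1}}\rangle$ is the unital $\ast$-algebra generated by $A_g$.
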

According to the propositions in the last section, the free decomposition of \(|G|\omega^t\) in \eqref{ec-5} involve only three types of distributions that are very explicitly determined by the distribution of $\omega$ and the number of elements of the group $G$:
\begin{enumerate}
\item the distribution of the identity component \(A_e\),  that we call $\mu_e$;
\item a common distribution associated with nontrivial involutions, that we call $\mu_{inv}$;
\item a common distribution associated with inverse pairs \((g,g^{-1})\),  that we call $\mu_{pair}$.
\end{enumerate}
Therefore, the spectral limiting distribution of $|G|\omega^t$ is 
\begin{equation*}
    \mu_{|G|\omega^t}
=
\mu_e
\boxplus
\mu_{\mathrm{inv}}^{\boxplus s}
\boxplus
\mu_{\mathrm{pair}}^{\boxplus r},
\end{equation*}
where $s$ and $r$ are described in the last section.

\begin{Example}
    Continuing with Remark \ref{main-remark}, we consider the groups $G_1=S_3$ and $G_2=\mathbb{Z}_6$, and let $A \in M_{6\times 6}(\mathcal{A})$ be free over $M_{6\times 6}(\mathbb{C})$. The left-$G_1$ and $G_2$ circulant decompositions of $A^t$ are given by 
    \begin{equation*}
        A^t = A_0 + A_1 + A_2 + A_3 + (A_4 + A_5)
    \end{equation*}
    and
    \begin{equation*}
        A^t = B_0 + (B_1 + B_5) + (B_2 + B_4) + B_3,
    \end{equation*}
    respectively. By our previous results, we obtain the following identities in distribution:
    \begin{equation}
        \label{ec-8}
        A_0\stackrel{d}{=}B_0,\qquad A_1\stackrel{d}{=}A_2\stackrel{d}{=}A_3\stackrel{d}{=}B_3,\qquad\text{and}\qquad   A_4+A_5\stackrel{d}{=}B_1+B_5\stackrel{d}{=}B_2+B_4.
    \end{equation}
    To numerically illustrate these identities, we simulate a complex Wishart matrix block-structure. for this simulation, we consider $36$ blocks of size $n=400$. Consequently, the generated Wishart matrix has dimensions $2400 \times 2400$ with $400$ degrees of freedom. Figure \ref{fig-simulaciones} displays the empirical spectral distributions for three different types of configurations presented in \eqref{ec-8}.

\begin{figure}[h]
    \centering
    \includegraphics[width=1\linewidth]{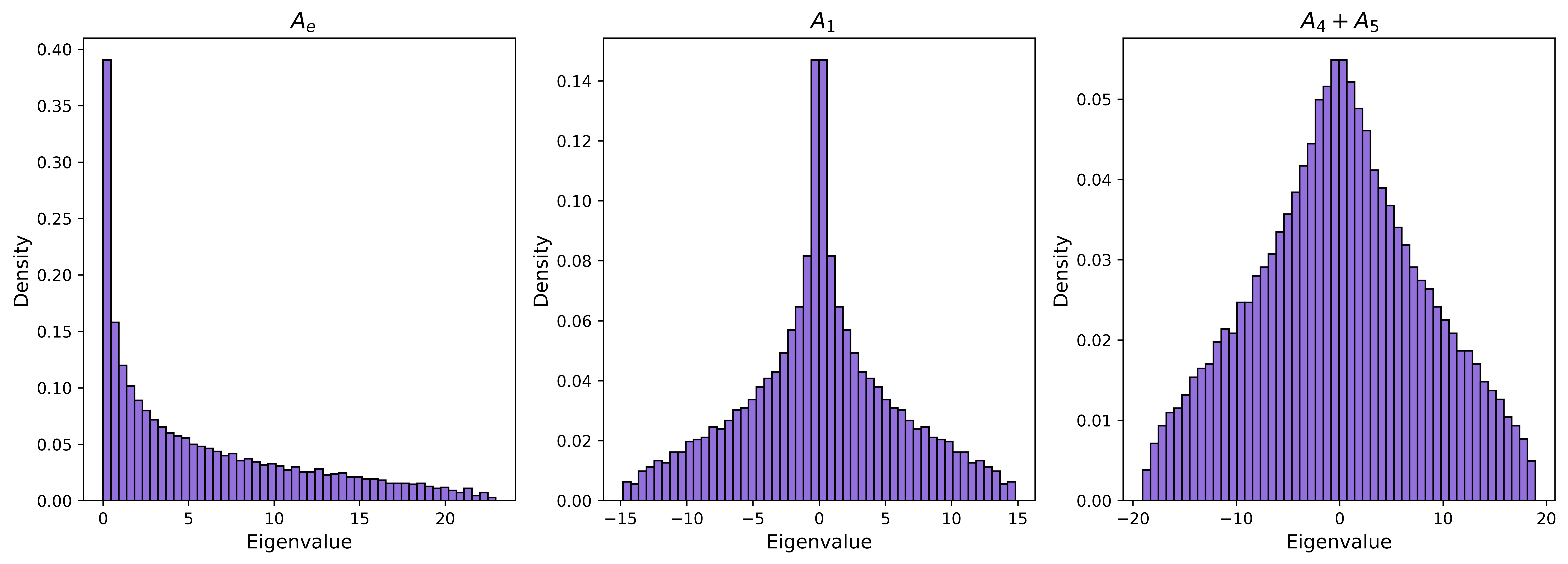}
    \caption{Empirical spectral distributions for the component $A_0$ (left), an involution $A_1$ (center), and the self-adjoint sum $A_4 + A_5$ (right) extracted from a $2400 \times 2400$ Wishart matrix.}
    \label{fig-simulaciones}
\end{figure}
    
\end{Example}
\newpage 
\bibliography{arXivVersion}

@article{speicher1994multiplicative,
  title={Multiplicative functions on the lattice of non-crossing partitions and free convolution},
  author={Speicher, Roland},
  journal={Mathematische Annalen},
  volume={298},
  number={1},
  pages={611--628},
  year={1994},
  publisher={Springer}
}

@book{nica2006lectures,
  title={Lectures on the combinatorics of free probability},
  author={Nica, Alexandru and Speicher, Roland},
  volume={13},
  year={2006},
  publisher={Cambridge University Press}
}

@article{diaconis1981generating,
  title={Generating a random permutation with random transpositions},
  author={Diaconis, Persi and Shahshahani, Mehrdad},
  journal={Zeitschrift f{\"u}r Wahrscheinlichkeitstheorie und verwandte Gebiete},
  volume={57},
  number={2},
  pages={159--179},
  year={1981},
  publisher={Springer}
}

@article{arizmendi2012products,
  author  = {Arizmendi, Octavio and Vargas, Carlos},
  title   = {Products of Free Random Variables and $k$-Divisible Non-Crossing Partitions},
  journal = {Electronic Communications in Probability},
  volume  = {17},
  number  = {18},
  pages   = {1--13},
  year    = {2012},
}

@article{aubrun2012partial,
  title={Partial transposition of random states and non-centered semicircular distributions},
  author={Aubrun, Guillaume},
  journal={Random Matrices: Theory and Applications},
  volume={1},
  number={02},
  pages={1250001},
  year={2012},
  publisher={World Scientific}
}

@article{peres1996separability,
  title={Separability criterion for density matrices},
  author={Peres, Asher},
  journal={Physical Review Letters},
  volume={77},
  number={8},
  pages={1413},
  year={1996},
  publisher={APS}
}

@article{banica2013asymptotic,
  title={Asymptotic eigenvalue distributions of block-transposed Wishart matrices},
  author={Banica, Teodor and Nechita, Ion},
  journal={Journal of Theoretical Probability},
  volume={26},
  number={3},
  pages={855--869},
  year={2013},
  publisher={Springer}
}

@article{banica2012block,
  title={Block-modified Wishart matrices and free Poisson laws},
  author={Banica, Teodor and Nechita, Ion},
  journal={Houston Journal of Mathematics},
  volume={41},
  number={1},
  pages={113--134},
  year={2015}
}

@article{jivulescu2014reduction,
  title={On the reduction criterion for random quantum states},
  author={Jivulescu, Maria Anastasia and Lupa, Nicolae and Nechita, Ion},
  journal={Journal of Mathematical Physics},
  volume={55},
  number={11},
  year={2014},
  publisher={AIP Publishing}
}

@article{arizmendi2016asymptotic,
  title={On the asymptotic distribution of block-modified random matrices},
  author={Arizmendi, Octavio and Nechita, Ion and Vargas, Carlos},
  journal={Journal of Mathematical Physics},
  volume={57},
  number={1},
  year={2016},
  publisher={AIP Publishing}
}

@inproceedings{voiculescu2006symmetries,
  title={Symmetries of some reduced free product C*-algebras},
  author={Voiculescu, Dan},
  booktitle={Operator Algebras and their Connections with Topology and Ergodic Theory: Proceedings of the OATE Conference held in Bu{\c{s}}teni, Romania, Aug. 29--Sept. 9, 1983},
  pages={556--588},
  year={2006},
  organization={Springer}
}

@article{voiculescu1995operations,
  title={Operations on certain non-commutative operator-valued random variables, in Recent Advances in Operator Algebras},
  author={Voiculescu, Dan},
  journal={Asterisque},
  volume={232},
  pages={243--275},
  year={1995}
}

@article{mingo2013real,
  title={Real second order freeness and Haar orthogonal matrices},
  author={Mingo, James A and Popa, Mihai},
  journal={Journal of Mathematical Physics},
  volume={54},
  number={5},
  year={2013},
  publisher={AIP Publishing}
}

@article{arizmendi2021cyclic,
  title={The cyclic group and the transpose of an $ R $-cyclic matrix},
  author={Arizmendi, Octavio and Mingo, James A},
  journal={Journal of Operator Theory},
  volume={85},
  number={1},
  pages={135--151},
  year={2021}
}

@article{voiculescu1991limit,
  title={Limit laws for random matrices and free products},
  author={Voiculescu, Dan},
  journal={Inventiones mathematicae},
  volume={104},
  number={1},
  pages={201--220},
  year={1991},
  publisher={Springer}
}

@article{mingo2022partial,
  title={On the partial transpose of a Haar unitary matrix},
  author={Mingo, James A and Popa, Mihai and Szpojankowski, Kamil},
  journal={Studia Mathematica},
  volume={266},
  number={3},
  year={2022}
}

@article{bolanos2023g,
  title={G-Circulant Quantum Markov Semigroups},
  author={Bola{\~n}os-Serv{\'\i}n, Jorge R and Quezada, Roberto and V{\'a}zquez-Becerra, Josu{\'e}},
  journal={Open Systems \& Information Dynamics},
  volume={30},
  number={01},
  pages={2350002},
  year={2023},
  publisher={World Scientific}
}
\bibliographystyle{alpha}
\end{document}